\newtheorem{thm}{Theorem}[section]
\newtheorem{THM}{Theorem}
\newtheorem{cor}[thm]{Corollary}
\newtheorem{prop}[thm]{Proposition}
\newtheorem{lemma}[thm]{Lemma}
\theoremstyle{definition}
\newtheorem{remark}[thm]{Remark}
\newcounter{comg}
\definecolor{grass}{rgb}{0.14,0.72,0.2}
\DeclareMathOperator{\Res}{Res}
\DeclareMathOperator{\Sym}{Sym}
\DeclareMathOperator{\tang}{tang}
\DeclareMathOperator{\Aut}{Aut}
\DeclareMathOperator{\sing}{sing}
\DeclareMathOperator{\Aff}{Aff(\mathbb C)}
\DeclareMathOperator{\ord}{ord}
\DeclareMathOperator{\card}{\#}
\DeclareMathOperator{\kod}{kod}
\DeclareMathOperator{\alb}{alb}
\DeclareMathOperator{\Alb}{Alb}
\DeclareMathOperator{\supp}{supp}
\def\Q{\mathbb Q}
\def\R{\mathbb R}
\def\C{\mathbb C}
\def\Corb{C_{\mathrm{orb}}}
\def\calR{\mathcal R}
\def\P{\mathbb P}
\def\h0A{H^0_{\mathrm{inv}}(A,\Omega^1_A)}
\def\ih0A{H^0_{\mathrm{inv}}(A_i,\Omega^1_{A_i})}
\def\Z{\mathbb Z}
\def\F{\mathcal F}
\def\G{\mathcal G}
\def\H{\mathcal H}
\def \Pu{\mathbb P^1}
\def \NG{N{\mathcal G}}
\def \NF{N{\mathcal F}}
\def \KF{K{\mathcal F}}
\def \KR{K{\mathcal R}}
\def \KG{K_{\mathcal G}}
\def \KG{K{\mathcal G}}
\newcounter{bound}
\newenvironment{bound}{\refstepcounter{bound}
\align \tag{\Roman{bound}}}
{\endalign}
\def \cnst{\Upsilon}
\begin{document}

\title
{Toward Effective Liouvillian Integration}
\author[G. Cousin]{Ga\"{e}l Cousin}

\author[A. Lins Neto]{Alcides Lins Neto}

\author[J. V. Pereira]{Jorge Vit\'{o}rio Pereira}
\address{\newline
IMPA, Estrada Dona Castorina, 110, Horto, Rio de Janeiro,
Brasil}\email{gael@impa.br, alcides@impa.br, jvp@impa.br}

\subjclass{37F75, 57R30} \keywords{Foliations, transversely affine structure, Liouvillian first integrals, invariant algebraic curves.}

\thanks{G. Cousin was supported by
Labex IRMIA, ANR project Iso-Galois and CAPES. A. Lins Neto and J.V. Pereira were partially supported by Cnpq and Faperj.}

\begin{abstract}
We prove that foliations on the projective plane admitting a Liouvillian first integral but not admitting a rational first integral always
have invariant algebraic curves of  degree bounded by a function of the degree of the foliation.
We establish, for the same class of foliations,  the existence
of a bound for the degree of the simplest integrating factor depending only on the degree of the foliation and on the nature of its singularities.
We also prove the existence of invariant algebraic curves of  small degree for foliations with rational first integral and intermediate Kodaira dimension.
\end{abstract}

\maketitle
\setcounter{tocdepth}{1}
\tableofcontents  \sloppy

\section{Introduction}

This paper draws motivation from an ancient question studied by Poincar\'e, Autonne, Painlev\'e and others:  Is it possible to decide if  all the  orbits of  a polynomial vector field on the complex affine plane are algebraic ?

Poincar\'e observes \cite{PoincarePalermoI} that in order to provide a positive answer to the above question it suffices to bound the degree of the general orbit. Even if  Poincar\'e is not explicit on which parameters the bound  should depend on,  in general, examples as simple as  linear vector fields on $\mathbb C^2$  show that such bound must depend  on combinatorial data attached to the singularities of the vector field like their resolution process and  quotients of eigenvalues of the resulting foliation. Soon after the appearance of Poincar\'e's paper, Painlev\'e writes in \cite[pp. 216--217]{zbMATH02673308}  the following paragraph.
 \begin{quotation}
 	J’ajoute qu’on ne peut esp\'erer r\'esoudre d’un coup qui consiste \`a limiter $n$. L'\'enonc\'e vers lequel il faut tendre doit avoir la forme suivante: ``On sait reconna\^itre si l’int\'egrale d’une  \'equation $F(y\prime,y,x)=0$ donn\'ee est alg\'ebrique ou ramener l’\'equation aux quadratures.'' Dans ce dernier cas, la question reviendrait \`a reconna\^itre si une certaine int\'egrale ab\'elienne (de premi\`ere ou de troisi\`eme esp\`ece) n’a que deux ou une p\'eriodes.
 \end{quotation}
 Painlev\'e  suggests that  one should first ask whether or not a
 given polynomial vector field admits a first integral ``expressed through quadratures''; and only then,
 having this special first integral at hand, decide whether or not the leaves are algebraic. To put
 things in perspective   it is useful to notice that the strategy to deal with the analogous problem for linear differential equations with rational coefficients is in accordance with Painlev\'e's suggestion \textit{cf.} \cite{MR527825}.

 The vague terminology  first integral ``expressed through quadratures'' can be formalized in several distinct ways.
 One possible interpretation is  that one should look for
 first integrals belonging to a Liouvillian extension of the differential field
 $(\mathbb C(x,y),\{ \partial_x, \partial_y\})$. For a precise definition and thorough discussion of this concept
 we refer to \cite{MR1062869} and \cite{MR2276503}. An important fact is Singer's theorem  \cite{MR1062869} that asserts that Liouvillian integrable foliations of the plane are \emph{transversely affine} foliations. We recall that a foliation defined by a rational $1$-form $\omega$ is transversely affine if $\omega$ admits an integrating factor, i.e. a closed rational $1$-form $\eta$ such that $d\omega = \omega \wedge \eta$. The class of transversely affine foliations includes the class of virtually transversely Euclidean foliations. These are foliations which, after pull-back by a generically finite rational map, are defined by a closed rational $1$-form.

 More recently, it came to light a family of examples \cite{MR1914932} showing the impossibility of giving bounds for the degree of a general algebraic orbit   depending only on the analytical type of the singularities of the foliation/vector field. They consist on one parameter families of holomorphic foliations
 with fixed analytical type of singularities, such that the general foliation has only finitely many algebraic leaves and, in contrast,  for a dense set of the parameter space the corresponding foliations have algebraic general leaf, of unbounded degree. These families of examples highlight the difficulties of Poincar\'e's original problem, and at the same time provide evidence for the effectiveness of the approach suggested by Painlev\'{e} as, in each family, all the members share a common integrating factor.

 The reader not acquainted to the theory of holomorphic foliations on surfaces is invited to take a look at the first chapters of  the reference textbook
\cite{MR3328860}, see also  \cite{MR2071237}.
\subsection{Invariant algebraic curves of small degree}

 Our main results provide further evidence in favor of Painlev\'e's  strategy.  The first one
 shows that  foliations on the projective plane admitting a Liouvillian first integral, but which do
 not admit a rational first integral, always possess an invariant algebraic  curve of comparatively small degree.

\begin{THM}\label{THM:A}
Let $\mathcal F$ be a foliation of degree $d\ge2$ on the projective plane $\mathbb P^2$.
Assume that $\mathcal F$ admits a Liouvillian first integral but does not admit a rational
first integral. Then $\mathcal F$ admits an algebraic invariant curve of degree at most $12(d-1)$.
Moreover, if $\F$ is not virtually transversely Euclidean then there exists such a curve of degree at most $6(d-1)$.
\end{THM}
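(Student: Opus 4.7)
The natural starting point is Singer's theorem \cite{MR1062869}: the existence of a Liouvillian first integral for $\mathcal{F}$ is equivalent to the existence of a closed rational $1$-form $\eta$ on $\mathbb{P}^2$ satisfying $d\omega = \eta\wedge\omega$, where $\omega$ is a polynomial $1$-form defining $\mathcal{F}$. Equivalently, $\mathcal{F}$ carries a transversely affine structure on the complement of the polar divisor $(\eta)_\infty$. I would begin by selecting, among all admissible closed $1$-forms $\eta$, one with $\deg((\eta)_\infty)$ minimal; standard arguments show that this makes $\eta$ essentially unique up to the obvious affine ambiguity.

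Next I would exploit the Leray decomposition $\eta = \sum_{i=1}^k \lambda_i \frac{df_i}{f_i} + dR$, with the $f_i$ distinct irreducible polynomials, $\lambda_i \in \mathbb{C}^*$, and $R \in \mathbb{C}(x,y)$. Comparing residues on both sides of $d\omega = \eta\wedge\omega$ along each $\{f_i = 0\}$ forces $\omega$ to vanish there, so every such component is $\mathcal{F}$-invariant. Moreover, were the logarithmic part empty (that is, $\eta = dR$), then $R$ would itself be a rational first integral of $\mathcal{F}$, contradicting the hypothesis. Hence $k \geq 1$ and the problem reduces to bounding $\min_i \deg(f_i)$.

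To reach the explicit constant $12(d-1)$ I would first prove a global numerical inequality: comparing polar orders in $d\omega = \eta\wedge\omega$ and feeding them into a Baum--Bott-type computation on $\mathbb{P}^2$ should bound $\sum_i \deg(f_i) + \deg((dR)_\infty)$ by an explicit linear function of $d$. A pigeonhole argument then produces a small $f_i$. The main obstacle I foresee is the pullback scenario: if $\mathcal{F}$ is obtained as a pullback, by a high-degree rational map, of a Riccati or turbulent foliation on a ruled surface, then individual $f_i$ (arising as pullbacks of fibers) can a priori be of large degree even when their number is small, so the naive pigeonhole fails. Handling this case should rely on the classification of transversely affine foliations on $\mathbb{P}^2$ without rational first integrals, and that is presumably where the precise constant $12$ is pinned down, reflecting the worst admissible geometric configuration.
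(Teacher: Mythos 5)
Your proposal contains two genuine gaps that make the argument unrecoverable as stated.

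First, the claim that ``were the logarithmic part empty (that is, $\eta = dR$), then $R$ would itself be a rational first integral of $\mathcal{F}$'' is false. If $\eta = dR$ then $d(e^{-R}\omega) = 0$, so $e^{-R}\omega$ is a closed $1$-form whose primitive is a \emph{Liouvillian} first integral, but $R$ is generically not a first integral at all (consider $\omega = \tfrac{dy}{y} - \lambda\tfrac{dx}{x}$ with $\lambda \notin \mathbb{Q}$: here $\eta = 0$, $R = 0$, and there is no rational first integral). So the step ``hence $k\ge 1$'' does not follow.

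Second, and more fundamentally, the strategy of bounding $\sum_i \deg f_i + \deg((dR)_\infty)$ by a linear function of $d$ cannot work, because no such bound exists. The paper makes this explicit: \cite[Section 5, Example 3]{MR1913040} gives a family of degree-$4$ foliations coming from the Gauss hypergeometric equation whose (unique) integrating factor is supported on three lines together with a rational curve of \emph{arbitrarily high} degree. So the polar divisor of $\eta$ is not controlled by $d$, the pigeonhole collapses, and Baum--Bott, which sees only singularity indices, cannot rescue it. You rightly sense that the pullback-of-Riccati situation is an obstruction, but the trouble is broader: even when $\mathcal{F}$ becomes defined by a closed rational $1$-form on a finite Galois cover (the other branch of the structure theorem of \cite{MR3294560,LPT}), the argument needed is not a degree count. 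In the paper that branch is handled by producing logarithmic symmetric differentials vanishing on the invariant curves, via the Iitaka--Albanese map to a quasi-abelian variety, the classification of cyclic automorphisms of quasi-abelian varieties (Theorem \ref{T:autA}, Proposition \ref{P:autA}), and a combinatorial theorem on products of primitive roots of unity due to Speyer (Theorem \ref{T:Speyer2}); the constant $12$ comes from there. The Riccati-pullback branch is handled separately by analyzing the orbifold canonical class of the reference fibration of a Kodaira dimension one Riccati foliation, where hyperbolic orbifold arithmetic pins down a constant ($\le 8$ in the transversely affine, no-first-integral case; $\le 42$ in general). The one place your computation does work is the sub-case where every residue of $\eta$ is $\le -1$: there Proposition \ref{P:residues} gives $\sum_C(-\Res_C)\deg C = d+2$, hence $\deg\operatorname{supp}(\eta)_\infty \le d+2$. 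But that is precisely the easy case, and the hard work in the paper is to deal with components of residue $> -1$, where this estimate gives nothing.
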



The starting point of the proof of Theorem \ref{THM:A}  is Singer's Theorem \cite{MR1062869}. Then, recent results on structure of transversely affine foliations,   \cite{MR3294560} and \cite{LPT},  establish  that a transversely affine foliation on a projective surface is either the pull-back under a rational map of a Riccati foliation; or is birationally equivalent to a  finite quotient of a foliation defined by a closed rational $1$-form. In both cases, our strategy consists in looking for sections of powers of the canonical bundle of the foliation vanishing along invariant algebraic curves. To achieve this in the case of pull-back of  Riccati foliations we explore the description of the positive part of the Zariski decomposition of the canonical bundle  of foliations of Kodaira dimension one due to McQuillan, see \cite{MR2435846} and \cite{MR2071237}, in order to prove  that, for some $k\le 6$, $|\KF^{\otimes k}|$  defines a map to a curve which will contain the sought curves in its fibers. The case of finite quotients of foliations defined by closed rational $1$-forms is trickier and makes use of the nonexistence of rational first integral to guarantee the existence of a non-trivial representation of the fundamental group of the complement of the  polar divisor of the transverse affine structure.
We first deal with foliations  defined by  closed rational $1$-forms. In this case, the non-trivial representation of the fundamental group  allows us to produce logarithmic $1$-forms generically transverse to the foliation and tangent to the zero divisor of the closed rational $1$-form defining it. In the general case, one is asked to understand cyclic quotients of foliations defined by closed rational $1$-forms. The proof goes on by studying the action of the relevant cyclic group on the space of symmetric logarithmic differentials and showing the existence of an invariant  symmetric logarithmic differential of degree $\le 12$ tangent to the sought curves.

Of course, it would be highly desirable to have a similar result for foliations admitting a rational first integral. Unfortunately,
our method to prove Theorem~\ref{THM:A}  exploits extensively the nonexistence of rational first integrals.
Nevertheless, its use can be avoided in the case
of foliations of Kodaira dimension zero or  one. As  a consequence
we obtain a similar result for foliations admitting a rational first integral such that the underlying fibration
is isotrivial with fibers of genus $g\ge 2$; or the underlying fibration has elliptic fibers.

\begin{THM}\label{THM:A2}
Let $\F$ be a foliation on $\mathbb P^2$ admitting a rational first integral and of intermediate Kodaira dimension, i.e.
$\kod(\F) \in \{ 0, 1\}$. Suppose its degree is $d\geq 2$.Then the following assertions hold true.
\begin{enumerate}
\item  If $\F$ is birationally equivalent to an isotrivial fibration of genus $1$ then
  	$\F$ admits an invariant algebraic curve of degree at most $6(d-1)$.
\item  If $\F$ is birationally equivalent to a non-isotrivial fibration of genus $1$ then
  any irreducible algebraic curve invariant by  $\mathcal F$ has degree at most $12(d-1)$.
 \item If $\F$ is birationally equivalent to an isotrivial fibration of genus $g\ge 2$  then	$\F$
admits an invariant algebraic curve of degree at most $42(d-1)$.
\end{enumerate}
\end{THM}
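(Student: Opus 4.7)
\emph{Common setup.} Let $f : \mathbb P^2 \dashrightarrow B$ be the rational first integral of $\mathcal F$. A resolution of indeterminacy $\pi : Y \to \mathbb P^2$ followed by contraction of vertical $(-1)$-curves yields a relatively minimal fibration $\tilde f : Y \to B$, with the induced foliation $\tilde{\mathcal F} = \pi^* \mathcal F$ satisfying $K_{\tilde{\mathcal F}} = K_{Y/B}$. Pushforward of a section of $K_{\tilde{\mathcal F}}^{\otimes k}$ along $\pi$ gives a section of $K_{\mathcal F}^{\otimes k} \simeq \mathcal O_{\mathbb P^2}(k(d-1))$ whose zero divisor contains the $\pi$-images of the non-exceptional vertical components of its lift; these images are $\mathcal F$-invariant curves in $\mathbb P^2$. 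Each part is thus reduced to constructing a pluricanonical section on $(Y, \tilde{\mathcal F})$ with appropriate vanishing, for $k$ equal to $6, 12$ or $42$ respectively.

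\emph{Parts (1) and (3): isotrivial cases.} By isotriviality, there is a finite Galois cover $\psi : B' \to B$ with group $G$ such that $Y \times_B B'$ is birational to $B' \times C$, with $C$ the generic fiber and the $G$-action diagonal. The group $G$ embeds in $\Aut(C)$, and the multiplicities of the multiple fibers of $\tilde f$ are orders of cyclic stabilizers of $G$ acting on $B'$. For $g = 1$, $|\Aut(E, 0)|$ divides $6$, so every such multiplicity divides $6$. For $g \geq 2$, the universal bound is $42$, extracted from the combinatorics of Fuchsian signatures and the role of the $(2, 3, 7)$ signature in realising the Hurwitz bound $84(g-1)$, combined with McQuillan's estimates on pluricanonical systems of foliations of Kodaira dimension one. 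Applying Kodaira's canonical bundle formula
\[
K_{Y/B} \;=\; \tilde f^* L + \sum_i (m_i - 1) F_i, \qquad \deg L = \chi(\mathcal O_Y) \geq 0,
\]
(or its higher-genus analogue via Fujita's semipositivity of the Hodge bundle $\tilde f_* \omega_{Y/B}^{\otimes \ell}$) produces a non-zero section of $K_{Y/B}^{\otimes k}$, $k \in \{6, 42\}$, vanishing along the reduced support of some multiple fiber. Pushforward to $\mathbb P^2$ delivers the claimed invariant curve of degree at most $k(d-1)$.

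\emph{Part (2): non-isotrivial elliptic.} Since the $j$-invariant $j : B \to \mathbb P^1$ is non-constant, one combines Kodaira's canonical bundle formula with the identity $12 \chi(\mathcal O_Y) = \deg j^*[\infty]$ and the fact that multi-fiber multiplicities of non-isotrivial elliptic fibrations are universally bounded. This realises $12 K_{Y/B}$ as the pullback $\tilde f^* D$ of an effective integral divisor on $B$ modulo a bounded vertical correction. Any $\mathcal F$-invariant irreducible curve on $\mathbb P^2$ lifts to a vertical divisor of $\tilde f$, which meets $\tilde f^{-1}(\mathrm{Supp}(D))$ non-trivially; its image in $\mathbb P^2$ is therefore contained in the zero divisor of a section of $\mathcal O_{\mathbb P^2}(12(d-1))$, bounding its degree.

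\emph{Main obstacle.} The most delicate aspect is Part (2), where the bound must hold for \emph{every} invariant irreducible curve rather than just for some of them. Since the generic fiber of $\tilde f$ is itself an invariant curve, its degree in $\mathbb P^2$ must be bounded — a non-trivial constraint relying crucially on the $12\chi$ identity available only in the non-isotrivial case. Part (3) presents a parallel subtlety in pinning down $42$ as the correct universal bound, requiring control over which cyclic subgroup orders of $\Aut(C)$ appear as multiplicities of multiple fibers in quotient constructions that actually arise from a foliation on $\mathbb P^2$.
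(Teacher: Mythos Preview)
Your overall strategy of producing pluricanonical sections on a model and pushing them down is the same as the paper's, but two of the three cases contain genuine gaps, and the paper's argument differs substantially there.

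\textbf{Part (3).} Your claim that fiber multiplicities of an isotrivial genus $g\ge 2$ fibration are universally bounded by $42$ is false: a cyclic automorphism of a genus $g$ curve can have order up to $4g+2$, and there is no bound independent of $g$. The Hurwitz bound $84(g-1)$ constrains the full automorphism group, not the orders of its cyclic subgroups, so your extraction of $42$ from the $(2,3,7)$ signature does not apply to the multiplicities of $\tilde f$. The paper's argument is structurally different: since an isotrivial hyperbolic fibration $\G$ has $\kod(\G)=1$, its Iitaka (pluricanonical) fibration $f$ is a \emph{second} fibration, generically transverse to $\G$. One shows $h^0(X,K_{\G}^{\otimes k})\ge 2$ for some $k\le 42$ by analyzing $f_*K_{\G}=K_{\Corb}$, the orbifold canonical of the base of $f$; the $42$ is the $(2,3,7)$ bound for hyperbolic orbifolds, applied to $\Corb$ and not to multiplicities of $\tilde f$. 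The invariant curve is then found inside the tangency locus of $\G$ with the foliation defined by $f$.

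\textbf{Part (2).} The assertion that multiple-fiber multiplicities of non-isotrivial elliptic fibrations are universally bounded is also false (Kodaira's type $_mI_b$ occurs for arbitrary $m$), so $12K_{Y/B}$ need not be the pullback of an integral divisor. The paper instead uses the \emph{moduli part} $M_C$ in Kodaira's formula: the positive part of the Zariski decomposition of $K_{\G}$ equals $f^*M_C$, and $12M_C=J^*\mathcal O_{\mathbb P^1}(1)$ is integral and base-point free regardless of the multiplicities. Hence $h^0(X,K_{\G}^{\otimes 12})\ge 2$, the moving part of $|K_{\G}^{\otimes 12}|$ consists of fibers, and every fiber is cut out by some section. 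Your sentence ``meets $\tilde f^{-1}(\mathrm{Supp}(D))$ non-trivially'' is incorrect as written for a fixed $D$; what is needed is precisely the freedom to move $D$, i.e.\ $h^0\ge 2$.

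\textbf{Part (1).} Your approach can be made to work here (on a rational surface $\chi(\mathcal O_Y)=1$ forces $\deg L>0$, so sections with fibral zeros exist even without multiple fibers), but the paper's route is different and cleaner: an isotrivial elliptic fibration has $\kod(\G)=0$, so the zero divisor of \emph{any} section of $K_{\G}^{\otimes k}$ lies in the negative part $N$ of the Zariski decomposition, which is $\G$-invariant; then one invokes the known bound $k\le 6$ for algebraic-leaved foliations of Kodaira dimension zero.
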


It  seems reasonable to conjecture that  every algebraically integrable foliation of degree $d$ on $\mathbb P^2$ always has an invariant algebraic curve of degree at most
$C_d$ where $C_d$ is a constant depending only on $d$.
Complementary to Theorem~\ref{THM:A2},  \cite{pereira2016effective} provides a bound for the degree of the general leaf of a foliation given by a non-isotrivial fibration of genus bigger than $1$. However, the given bound also depends on the genus of the fiber.

\subsection{Effective bounds for the degree of an integrating factor}
In general, it is not possible to bound the degree of all algebraic curves   invariant by a non-algebraically integrable transversely affine foliation just in function of the degree
of the foliation. Explicit examples, derived from Gauss hypergeometric equation, appeared in \cite{MR1896037}, see also \cite{MR1913040}. The next best thing one might hope for is the existence of a bound for the invariant algebraic curves  depending only on the degree of the foliation and  on the local analytic type of singularities of the foliation. Our  last main result establishes such a bound for a special invariant divisor.

\begin{THM}\label{THM:C}
Let $\F$ be a foliation  on $\mathbb P^2$ with non-algebraic general leaf and  admitting
a Liouvillian first integral. Let $\G$ be a reduction of singularities of $\F$. Then there exists an explicit constant $\cnst$ such that $\F$ admits a transversely affine structure with polar divisor of degree at most~$\cnst$. The constant $\cnst$ is effectively computable and
depends only on the degree of $\F$, $N\G^2$, the number of singularities and the Camacho-Sad indices of $\G$.
\end{THM}

In other words, if a non-algebraically integrable transversely affine foliation $\F$ is defined by a homogeneous $1$-form $\omega$ on $\mathbb \C^3$ then there exists a homogeneous closed rational $1$-form $\eta$ on $\mathbb C^3$ with polar divisor $(\eta)_{\infty}$ having degree bounded by $\cnst$ and satisfying the
equation $d\omega = \omega \wedge \eta$.

In practice, an estimate of $\cnst$ can be explicitly written down if one performs the reduction of singularities of $\F$. The normal bundle and Neron-Severi can be followed step by step, so that $N\G^2$ is easily known. If all the singularities of $\G$ are non-degenerate the Camacho-Sad indices are computed from the first jets of local generating vector fields.  In the presence of a degenerate singularity (saddle-node), the only possible non trivial Camacho-Sad index of an  irreducible analytic separatrix is the so called formal invariant of the singularity. If the singularity has Milnor number $\mu$, this invariant is determined from the $2\mu$-th jet of  a local generating vector field.

Actually, Theorem \ref{THM:C} follows from  an algorithm to decide if a non algebraically integrable foliation admits a transverse affine structure. The algorithm provides \emph{explicitely} an integrating factor/transversely affine structure when one exists.  All this relies on the structure theorem for transversely affine foliations and Theorem~\ref{THM:A}.

\subsection{Acknowledgements} We are grateful to the mathoverflow community in general
and to David Speyer in special for providing valuable and substantial help toward the
proof of Theorem \ref{T:Speyer2}.

\section{Structure of transversely affine foliations}

This section starts by recalling  the basic definitions and properties of transversely affine foliations and then
reviews the structure of this class of foliations following  \cite{MR3294560} and \cite{LPT}.

\subsection{Definition}\label{S:definition}
Let $\F$ be a codimension one holomorphic foliation on a complex manifold $X$ with normal bundle $\NF$, i.e. $\F$ is defined by a holomorphic  section $\omega$ of $\NF\otimes \Omega^1_X$ with zero locus of codimension $\geq 2$ and satisfying $\omega \wedge d \omega =0$. A    singular transverse affine structure for $\mathcal F$ is a meromorphic flat connection
\[
	\nabla :\NF \longrightarrow  \NF \otimes \Omega^1_X(*D), \mbox{ satisfying } \nabla(\omega)=0; \,
\]
where $D$ is a reduced divisor on $X$ and $\Omega^1_X(*D)$ is the sheaf of meromorphic $1$-forms on $X$ with poles (of arbitrary order)
along $D$.
We will always take $D$ minimal, in the sense that the connection form of  $\nabla$ is not holomorphic in any point of $D$. The divisor $D$ is the {singular divisor} of
the transverse affine structure. Taking  multiplicities of the poles as coefficients, we define the polar divisor $(\nabla)_{\infty}$, so that the singular divisor is the support of the polar divisor.

A foliation $\mathcal F$ is a {singular transversely affine foliation} if it admits a singular transverse affine structure. Aiming at simplicity, from now on, when talking about singular transverse  affine structures and singular transversely affine foliations, we will
omit the adjective singular.

When $X$ is an algebraic manifold, the transverse affine structure can be defined by rational $1$-forms.
If $\omega_0$ is a rational $1$-form  defining $\F$  then the existence of a meromorphic flat connection on $\NF$ satisfying $\nabla(\omega)=0$
is equivalent to the existence of a rational $1$-form $\eta_0$ such that
\[
d \omega_0 =  \omega_0 \wedge \eta_0  \quad \text{ and } \quad d \eta_0 = 0.
\]
Indeed, if $U$ is an arbitrary open subset of a complex manifold $X$  where $\NF$ is trivial then a  meromorphic connection on a trivialization of  $\NF$ over $U$
can be expressed as
\[
\nabla_{|U} ( f ) = df +  f \otimes \eta_0 \, ,
\]
where $\eta_0$ is a closed meromorphic $1$-form which belongs to $H^0(U,\Omega^1_X(*D))$. If $\omega_0$ represents $\omega$ in this very same trivialization
then $\nabla_{|U}(\omega_0) = d \omega_0 + \eta_0 \wedge \omega_0$. Therefore  $\nabla(\omega)=0$ is  equivalent to $d \omega_0 =  \omega_0 \wedge \eta_0$.

The equality $d \omega_0 = \omega_0 \wedge \eta_0$ implies that the (multi-valued) $1$-form $ \exp(\int \eta_0) \omega_0$ is closed.
Its primitives are first integrals for the
foliation $\F$. These first integrals belong to a Liouvillian extension of the field of rational functions on $X$, and conversely
the existence of a non-constant Liouvillian first integral for  $\mathcal F$  implies that $\mathcal F$ is transversely affine, see \cite{MR1062869}.

Even if $\omega_0$ and $\eta_0$ may have poles in the complement of $D$, the multi-valued function $\int \exp(\int \eta_0) \omega_0$ coincides with the developing map of $\F_{\vert X-(D\cup \sing\F)}$ and extends holomorphically to the whole of $X-D$. For any given base point $q \in X -D$,  its  monodromy  is an anti-representation $\varrho$ of the fundamental group of the complement of
$D$ in $X$ to the affine group $\Aff = \mathbb C^* \ltimes \mathbb C$. The linear part of $\varrho$ will
be denoted by $\rho$. The abelian representation $\rho$  coincides with the monodromy of the flat meromorphic connection $\nabla$.
\begin{center}
\begin{tikzpicture}
  \matrix (m) [matrix of math nodes,row sep=3em,column sep=4em,minimum width=2em]
  {
    \pi_1(X-D) & \Aff \\
      \, & \mathbb C^* \\};
  \path[-stealth]
    (m-1-1) edge node [above] {$\varrho$} (m-1-2)
            edge node [below] {$\rho$} (m-2-2)
    (m-1-2)  edge node [below] {} (m-2-2)  ;
\end{tikzpicture}
\end{center}
Here and throughout the paper we will deliberately omit the base point of the fundamental groups. Hopefully no confusion will arise.

\subsection{Singular divisor and residues}
Recall from the previous section that the singular divisor of a transverse affine structure $\nabla$ is nothing but
the reduced divisor of poles of $\nabla$. A simple computation shows that the irreducible components of the singular divisor $D$ of a transverse affine structure $\nabla$
for a foliation $\mathcal F$ are invariant  by $\mathcal F$, cf. \cite[Proposition 2.1]{MR3294560}.

Since $\nabla$ is flat we can attach to each irreducible component $C$ of $D$ a complex number $\Res_C(\nabla)$, defined as the residue
of any local meromorphic $1$-form $\eta_0$ defining $\nabla$ at a general point of $C$.  The residues of $\nabla$ determine the Chern
class of $\NF$ as the next proposition shows. For a proof see \cite[Proposition 2.2]{MR3294560}.

\begin{prop}\label{P:residues}
Let $X$ be a projective manifold. If $\nabla$ is any flat meromorphic connection on a line-bundle  $\mathcal L$ then  the class of
$-\sum \Res_{C}(\nabla) [C]$ in $H^2(X,\mathbb C)$,  with  the summation ranging over the irreducible components of the singular divisor $D$, coincides with the Chern
class of $\mathcal L$. Reciprocally, given a $\mathbb C$-divisor $R=\sum \lambda_C C$ with the same class in $H^2(X, \mathbb C)$ as a line bundle $\mathcal L$, there exists a flat meromorphic connection $\nabla_{\mathcal L}$ on
$\mathcal L$ with logarithmic poles and $\Res(\nabla_{\mathcal L})=-R$.
\end{prop}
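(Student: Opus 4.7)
The central tool is the logarithmic residue short exact sequence
\begin{equation*}
0 \to \Omega^1_X \to \Omega^1_X(\log D) \xrightarrow{\Res} \bigoplus_{C \subset D} \mathcal O_C \to 0,
\end{equation*}
whose connecting homomorphism $\delta: \bigoplus_C H^0(\mathcal O_C) = \bigoplus_C \mathbb{C} \to H^1(X, \Omega^1_X)$ sends $1_C$ to $2\pi i\,[C]$; the value of $\delta$ is computed from the tautological logarithmic connection on $\mathcal O_X(C)$, whose local expression is $d\log f_C$, and whose associated Čech coboundary is $\{d\log h_{ij,C}\}$, representing $2\pi i\, c_1(\mathcal O_X(C)) = 2\pi i\,[C]$.

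For the first assertion, one reduces to the case of logarithmic poles (higher-order poles of a closed meromorphic $1$-form are locally exact and hence contribute nothing to the Čech cohomology class). Covering $X$ by trivializations $\{U_i\}$ of $\mathcal L$ with transition functions $g_{ij}$, $\nabla$ becomes a $0$-cochain $\{\eta_i\}$ in $\Omega^1_X(\log D)$ with $\Res_C(\eta_i) = \Res_C(\nabla)$ and $\eta_j - \eta_i = -d\log g_{ij}$. The Čech coboundary $\{-d\log g_{ij}\}$ is a representative of $-2\pi i\, c_1(\mathcal L)$ in $H^1(X, \Omega^1_X)$; by exactness of the residue sequence it equals $\delta\bigl((\Res_C\nabla)_C\bigr) = 2\pi i \sum_C \Res_C(\nabla)\,[C]$. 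Equating these yields $c_1(\mathcal L) = -\sum_C \Res_C(\nabla)\,[C]$.

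For the converse, given $R = \sum \lambda_C C$ with $[R] = c_1(\mathcal L)$, the identity $\delta\bigl((-\lambda_C)_C\bigr) = -2\pi i\, c_1(\mathcal L) = [-d\log g_{ij}]$ allows us to lift via exactness a $0$-cochain $\{\eta_i\}$ in $\Omega^1_X(\log D)$ with $\Res_C(\eta_i) = -\lambda_C$ and $\eta_j - \eta_i = -d\log g_{ij}$; this data assembles into a meromorphic connection $\nabla_\mathcal L$ on $\mathcal L$ with logarithmic poles along $\mathrm{supp}(R)$ and residues $-R$.

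The delicate step is flatness. Because the residues of each $\eta_i$ are constants, each $d\eta_i$ is holomorphic on $U_i$, and since $d(\eta_j - \eta_i) = -d(d\log g_{ij}) = 0$ these forms glue to a global holomorphic $2$-form $\Theta \in H^0(X, \Omega^2_X)$. In the total complex of the Čech-de Rham bicomplex for $\Omega^\bullet_X(\log D)$ (after pulling back to a log resolution if $D$ is not simple normal crossings), the identity $D(\eta_i) = (\Theta, -d\log g_{ij})$ shows that $\Theta$ and the cocycle $\{-d\log g_{ij}\}$ represent the same class in $H^2(X \setminus D, \mathbb C)$. The latter is $-2\pi i\, c_1(\mathcal L)|_{X \setminus D}$, which vanishes since $c_1(\mathcal L) = \sum \lambda_C [C]$ restricts to zero on the complement of $D$. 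By Deligne's $E_1$-degeneration of the Hodge-de Rham spectral sequence for the logarithmic complex, the natural map $H^0(X, \Omega^2_X(\log D)) \hookrightarrow F^2 H^2(X \setminus D, \mathbb C)$ is injective, so $\Theta$ must vanish as a form, whence $d\eta_i = 0$ and $\nabla_\mathcal L$ is flat. The main obstacle is precisely this Hodge-theoretic verification of flatness; the existence of $\{\eta_i\}$ with the prescribed residues and the residue formula for $c_1(\mathcal L)$ are essentially direct outputs of the residue exact sequence.
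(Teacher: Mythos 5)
The paper does not give its own proof of this proposition; it simply cites \cite[Proposition~2.2]{MR3294560}, so there is no in-text argument to compare against. Your strategy — the logarithmic residue exact sequence, the Čech representative $\{d\log g_{ij}\}$ of $2\pi i\,c_1(\mathcal L)$, the connecting homomorphism computation, and Deligne's $E_1$-degeneration to kill the curvature form $\Theta$ — is natural and is essentially the argument one finds there. The converse direction and the flatness step are handled correctly.

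There is, however, a gap in your treatment of the direct assertion when $\nabla$ has poles of order $\ge 2$. You dismiss higher-order poles with the remark that they are ``locally exact and hence contribute nothing to the Čech cohomology class,'' but this is not a proof. Concretely: if you write $\eta_i = \eta_i^{\log} + d\psi_i$ on each $U_i$ (with $\psi_i$ meromorphic), then $\{\eta_i^{\log}\}$ is the cochain actually valued in $\Omega^1_X(\log D)$ that lifts the residues, and its Čech coboundary is $\{-d\log g_{ij}\} - \{d(\psi_j - \psi_i)\}$, not $\{-d\log g_{ij}\}$. One checks that $\psi_j-\psi_i$ is holomorphic on $U_{ij}$ (because $d(\psi_j-\psi_i)$ is), so $\{\psi_j-\psi_i\}$ defines a class in $H^1(X,\mathcal O_X)$ which in general is nonzero; its image under $d_*\colon H^1(X,\mathcal O_X)\to H^1(X,\Omega^1_X)$ is the correction term. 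That image does vanish, but the reason is Hodge-theoretic: on a projective (or compact Kähler) manifold, $d_*\colon H^1(\mathcal O_X)\to H^1(\Omega^1_X)$ is the zero map, by degeneration of the Frölicher/Hodge-de Rham spectral sequence (equivalently, the $\partial\bar\partial$-lemma applied to $\partial\alpha$ for $\alpha$ a harmonic $(0,1)$-representative). Local exactness alone does not kill a Čech class. In other words, the same Hodge-theoretic input you correctly invoke to prove flatness in the converse is already needed in the direct assertion, and the parenthetical remark conceals a real step. Finally, in the converse you should also say a word about descending the constructed connection from a log resolution back to $X$ when $\operatorname{supp}(R)$ is not simple normal crossings; for a rank-one flat meromorphic connection this is routine (the connection form descends as a meromorphic $1$-form, and residues along strict transforms equal residues along the original components), but it should be stated.
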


\subsection{Structure Theorem}
The global structure of transversely affine foliations is described by the next result.

\begin{thm}\label{T:structure}
Let $X$ be a projective manifold and $\mathcal F$ be a singular transversely affine foliation on $X$.   Then at least one of following assertions holds true.
\begin{enumerate}
\item \label{struct1} The foliation is virtually transversely Euclidean, i.e. there exists a generically finite Galois morphism $p:Y\to X$ such that
$p^*\mathcal F$ is defined by a closed rational $1$-form.
\item There exists a transversely affine Ricatti foliation $\mathcal R$ on a surface $S$ and
a rational map $p:X \dashrightarrow S$ such that $p^* \mathcal R = \mathcal F$.
\end{enumerate}
\end{thm}

This result was first established in \cite{MR3294560} under the additional assumption that $H^1(X,\mathbb C)=0$.
The version stated above is proved in \cite{LPT}.

\begin{remark}\label{rem:eucl}
A foliation on an arbitrary projective manifold is virtually transversely Euclidean  if and only if there exists a transversely
affine structure $\nabla$ for $\mathcal F$ which has finite monodromy and at worst logarithmic poles, see \cite[Example 2.10 and proof of Theorem 5.2]{MR3294560}.   If
we restrict to projective manifolds with $H^1(X,\mathbb C)=0$ then the finiteness of the monodromy is equivalent to the rationality of
the residues.
\end{remark}
We also notice the following.
\begin{prop}\label{P:kod0}
Let $\F$ be a reduced  foliation of Kodaira dimension zero on a projective surface $X$.
Then $\F$ is virtually transversely Euclidean.\end{prop}
\begin{proof}
According to \cite[Fact IV.3.3]{MR2435846}, there exists a smooth projective surface $Y$ and a  generically finite rational map $\pi:Y  \dashrightarrow X$ such that
$\G =\pi^* \F$  is defined
by a holomorphic vector field $v$. Consider the one parameter subgroup of $\Aut(X)$ generated by
$v$. Its Zariski closure is an abelian subgroup $H$ of $\Aut(X)$ which preserves $\G$. If every element in the Lie algebra of $H$ is tangent to $\G$ then the leaves of
$\G$ are algebraic and the same
holds true for the leaves of $\F$. Thus we can define $\F$ through a logarithmic $1$-form.  If instead there exists
an element in the Lie algebra
of $H$ generically transverse to $\G$ then $\G$  is also defined
by a closed rational $1$-form, cf. \cite[Corollary 2]{MR1957664}.
In this way, in both cases, $\G$ admits a transversely affine structure given by a logarithmic connection $\nabla_{\G}$ with trivial monodromy.
This transverse structure descends to a transverse structure for $\F$ with
finite monodromy and logarithmic poles, see \cite[Example 2.10]{MR3294560}.
\end{proof}

\subsection{Finite Galois morphism}\label{S:finitegalois}
 As in the sequel we are going to deal exclusively with foliations on projective surfaces,
we will restrict ourselves to the two dimensional case from now on.
Let $\mathcal F$ be a transversely affine foliation on a projective surface $X$ with transversely affine structure given by a logarithmic connection  $\nabla$ with rational residues and finite monodromy. We will give details on the construction the generically finite Galois morphism $p: Y \to X$ appearing in the statement
of the structure Theorem \ref{T:structure}.

Decompose $\Res(\nabla)$ as $\Res_{\mathbb Z} + \Res_{\mathbb Q}$ where $\Res_{\mathbb Z}$ is the round-down of $\Res(\nabla)$ and
$\Res_{\mathbb Q}$ is the fractional part of $\Res(\nabla)$. According to our definition the coefficients of $\Res_{\mathbb Q}$ lie in
$[0,1) \cap \mathbb Q$.

If $\Res_\Q \neq 0$  let $m = m(\nabla)$ be the smallest positive integer such that $m \Res_\Q$ is a divisor with integral coefficients.
Notice that the $m$-th power of the monodromy representation $\rho: \pi_1(U) \to \mathbb C^*$ of $\nabla$, extends to a representation
$\rho^m: \pi_1(X) \to \mathbb C^*$.  If $\rho^m$ is trivial then $m$ is the order of $\rho$, in general  $\ord(\rho) = m \ord(\rho^m)$.
Before proceeding, replace $X$ by the finite \'{e}tale covering determined by $\ker \rho^m$, in order to have the equality $m=\ord(\rho)$.
Once we have done this, we obtain that $\mathcal L= \NF^* \otimes \mathcal O_X( - \Res_\Z)$ is a $m$-th root of $m \Res_\Q$, i.e.
\[
\mathcal L^{\otimes m} = \mathcal O_X(m \Res_\Q) \, .
\]

Let $p : Y \to X$ be the branched covering of degree $m=\ord(\rho)$ determined by $\mathcal L$ branched along the effective divisor $m \Res_\Q$,
see \cite[Chapter I, Section 17]{MR2030225}.  If $\sigma \in H^0(X, \mathcal L^{\otimes m})$ is the section vanishing along $m\Res\Q$ and $L$ (respectively $L_m$) is the total space of $\mathcal L$ (respectively $\mathcal L^{\otimes m}$) then $Y$ is the normalization of the pre-image of the graph of $\sigma$ under the   morphism $\theta_m :L_M \to L$ defined fiberwise by $t\mapsto t^m$. Of course,  $\theta_m$ commutes with the projections to $X$.  Notice  that $p: Y \to X$  is Galois, more precisely there exists $\varphi:  Y \to Y$ an automorphism of order $m$
such that $p \circ \varphi = p$.

The monodromy of this covering coincides with the monodromy of the local system of
flat sections of the connection $\nabla$. An alternative construction of $Y$ is given by taking the \'{e}tale Galois covering  $\tilde U \to U$ determined by
$\ker \rho$  and then compactifying in an equivariant way respecting the compactification $X$ of $U$, see for instance \cite[Theorem 1.3.8]{MR933557}.

Over smooth points of the divisor $\Res_\Q$ the surface $Y$ is smooth, and over normal crossing singularities it has at worst cyclic quotient singularities, see \cite[Chapter III, Theorem 5.2]{MR2030225}.
If we consider the  minimal resolution of singularities then  automorphisms of $Y$ lift to the resolution. Therefore we can replace
$Y$ by its minimal resolution and still denoted by $p: Y \to X$ the projection to $X$ and by $\varphi : Y \to Y$ the automorphism of such projection.

Let $\mathcal G = p^* \mathcal F$. Since  $p$ ramifies only over irreducible components of $\Res_{\Q}$, we have the formula  \cite[Chapter 2, Section 3]{MR3328860}.
\[
\NG = p^* \NF \otimes \mathcal O_Y\left( -E + \sum_{C_0 \subset |\Res_{\Q}|}   (p^*C_0)_{red} - p^* C_0  \right) \, ,
\]
where $E$ is an effective divisor contracted by $p$.
Indeed, if $C$ is an irreducible component of  $p^*\Res_{\Q}$
dominating $C_0 \subset X$ and $\omega \in H^0(X,\Omega^1_X \otimes \NF)$ is a twisted $1$-form defining $\mathcal F$ then
the vanishing order  of $(p^*\omega)$ along $C$ is equal to $q(\nabla,C_0) - 1$  where $q(\nabla,C_0)$ is the order of $\Res_{C_0}(\nabla)$ in the group $\Q/ \Z$.
Therefore the connection $\nabla_{\mathcal G}$ on $\NG$ satisfies
\[
\Res_C(\nabla_{\mathcal G})  = q(\nabla,C_0) \big( \Res_{C_0}(\nabla) + 1 \big) -1 \, .
\]
Moreover
\[
\begin{array}{ccc}
\Res_C(\nabla_{\mathcal G}) < 0 & \text{ if and only if }&\Res_{C_0}(\nabla) \le -1 ;  \quad  \text{ and }\\
\Res_C(\nabla_{\mathcal G}) =-1 & \text{ if and only if }&\Res_{C_0}(\nabla) =-1 ; \quad  \text{ and } \\
\Res_C(\nabla_{\mathcal G}) \ge 0 & \text{ if and only if }&\Res_{C_0 }(\nabla) \ge \displaystyle{\frac{1 - q(\nabla,C_0)   }{q(\nabla,C_0)  }} .\\
\end{array}
\]
By design the covering kills the monodromy of $\nabla$. Consequently  all    residues of $\nabla_{\G}$, including the ones along irreducible components of $E$,  are integers.
Therefore we have a closed rational $1$-form $\omega_{\mathcal G}$ defining $\mathcal G$ with poles over
the irreducible components of $\Res(\nabla)$ with residues smaller than or equal to $-1$; and leaving
invariant the curves over the irreducible components of $\Res(\nabla)$ with residues strictly greater than $-1$.

Notice that the deck transformation $\varphi:Y \to Y$ and the closed rational $1$-form $\omega_{\mathcal G}$ are related through the identity
$\varphi^* \omega_{\mathcal G} = \xi_m \omega_{\mathcal G}$,  for $\xi_m$ an $m$-th root of unity.


We summarize the discussion above in the following proposition.

\begin{prop}\label{P:covering}
Let $\mathcal F$ be a transversely affine foliation on a simply connected projective surface $X$ with logarithmic transversely affine structure $\nabla$
with rational residues. Let $m$ be the  order in $\Q / \Z$ of the group generated by the residues of $\nabla$.
Then there exists a Galois morphism  $ p  : Y \to   X$ with cyclic Galois group from a smooth projective surface $Y$ to $ X$
of degree $m$ such that $p^* \mathcal F$ is defined by a closed rational $1$-form $\alpha$.
Moreover, if $\varphi : Y \to Y$ generates the group of  automorphism of the covering $p$ then  $\varphi^* \alpha = \exp(2 i \pi k /m) \alpha$
for some integer $k$ relatively prime to $m$.
\end{prop}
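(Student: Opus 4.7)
The plan is to organize the construction from the preceding subsection into four steps, exploiting the hypothesis $\pi_1(X)=1$ to simplify it.

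First, because $X$ is simply connected the character $\rho^m:\pi_1(X)\to\mathbb C^*$ obtained by extending the $m$-th power of the monodromy of $\nabla$ is automatically trivial. Hence $m$ is already the order of $\rho$, so the preliminary étale base change described in the general discussion above is unnecessary; in particular $\mathcal L := \NF^* \otimes \mathcal O_X(-\Res_{\mathbb Z})$ is an honest $m$-th root of $\mathcal O_X(m \Res_{\mathbb Q})$.

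Second, I define $r : \tilde X \to X$ as a composition of blow-ups centered at the singular points of the support of $\Res_{\mathbb Q}(\nabla)$ chosen so that the fractional residue divisor of the pulled-back transverse affine structure $\tilde \nabla := r^*\nabla$ is supported on a disjoint union of smooth curves; the effect of each blow-up on residues is governed by Proposition~\ref{P:residues}. Applying the recipe above to $(\tilde X, \tilde \nabla)$ produces a line bundle $\tilde{\mathcal L}$ on $\tilde X$ with $\tilde{\mathcal L}^{\otimes m}\simeq \mathcal O_{\tilde X}(m\Res_{\mathbb Q}(\tilde \nabla))$, and I let $p:Y\to \tilde X$ be the degree-$m$ cyclic cover associated to this data, constructed as recalled from \cite[Ch.~I, §17]{MR2030225}. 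Since the branch divisor $m\Res_{\mathbb Q}(\tilde\nabla)$ is now smooth, the total space $Y$ is smooth, and the deck group is cyclic of order $m$, generated by an automorphism $\varphi$ that acts on the fibers of $\tilde{\mathcal L}$ by multiplication by a primitive $m$-th root of unity.

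Third and last, I extract the closed rational $1$-form $\alpha$. Locally on $\tilde X$ the connection reads $\tilde\nabla(f)=df+f\,\eta_0$ with $\eta_0$ a closed logarithmic $1$-form; the pull-back $p^*\eta_0$ is globally exact on $Y$, equal to $-d\log g$ for a rational function $g$, precisely because the cover was designed to trivialize the monodromy $\rho$. Setting $\alpha := g \cdot (p\circ r)^*\omega$, where $\omega$ locally represents $\mathcal F$ with $d\omega=\omega\wedge\eta_0$, yields a globally defined closed rational $1$-form on $Y$ defining $(p\circ r)^*\mathcal F$. The equivariance $\varphi^*\alpha = \exp(2\pi i k/m)\,\alpha$ is then read off from the $\varphi$-action on $g$, which in turn reflects the monodromy character $\rho$. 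The point requiring the most care — and the main bookkeeping obstacle — is verifying the primitivity of this root of unity, i.e.\ that $\gcd(k,m)=1$; this follows because $m$ was defined as the exact order in $\mathbb C/\mathbb Z$ of the subgroup generated by the residues of $\nabla$, so the induced character of $\langle\varphi\rangle\cong\mathbb Z/m\mathbb Z$ acting on $\alpha$ is faithful.
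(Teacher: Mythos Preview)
Your proposal is correct and follows essentially the same construction as the paper: the proposition is a summary of the discussion in \S\ref{S:finitegalois}, and you have accurately reorganized that discussion while noting that the hypothesis $\pi_1(X)=1$ renders the preliminary \'etale base change unnecessary. The only cosmetic difference is that in your third step you produce the closed $1$-form by directly trivializing the monodromy (writing $p^*\eta_0=-d\log g$), whereas the paper instead verifies by a residue computation that $\nabla_{\mathcal G}$ has integral residues along every component; these are two equivalent ways of expressing that the pulled-back connection admits a global rational flat section.
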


The restriction to simply-connected surfaces in the result above is necessary only to control
the order $m$ of the monodromy on $\nabla$ in terms of the order in $\mathbb Q/\mathbb Z$ of
the group generated by the residues of $\nabla$.

\begin{lemma}\label{L:fix}
Notation as in Proposition \ref{P:covering}. If we further assume that $\F$ is a foliation with reduced singularities
then  $\KG = p^* \KF$.
\end{lemma}
\begin{proof}
If the Galois morphism $p: Y \to X$ determined by $\nabla$ is a ramified covering, i.e. does not contract curves,
then $\KG = p^* \KF$ according to \cite[Chapter 2, Example 3.4]{MR3328860}.

Let $C_1, \ldots, C_k$ be the irreducible components of the divisor contracted by $p$.
These curves form  a finite disjoint union of Hirzebruch-Jung chains of rational curves which are contracted to singular points
of the support of $\Res_{\Q}$.  Arguing as in  \cite[Chapter 2, Examples 3.1 and  3.4]{MR3328860} we deduce that $\KG =  p^* \KF + E$ for some divisor $E = \sum a_i C_i$ supported on these chains.
Since $\nabla$ is logarithmic, it follows that both  residues of  $\nabla$ along the  separatrices of a saddle node are integers.
Therefore  the singularities of $\F$ at the singularities of the support of $\Res_{\Q}$ are reduced singularities with invertible linear part.
Let $q\in p(C_i)$ be such a singularity. The two local separatrices through $q$  lift to $Y$ as   disjoint separatrices intersecting the Hirzebruch-Jung chains of rational curves on $p^{-1}(q)$
at both of their  extremities. Therefore each of the curves $C_i$  contracted by $p$  contains two singularities of $\G= p^* \F$ with invertible linear part. From this remark and \cite[Chapter $2$, Proposition $2.3$]{MR3328860}, it follows
$E \cdot C = \KG \cdot C = - \chi(C) + Z(\F,C) = 0$, for every  irreducible  curve $C$ contracted by $p$.
We deduce from the negative definiteness of the intersection matrix $(C_i \cdot C_j)$ that  $E=0$ as wanted.
\end{proof}

\section{Virtually transversely Euclidean foliations}\label{S:closed}

This section is devoted to the proof of a particular case of  Theorem \ref{THM:A}. Here
we will investigate transversely affine foliations which do not admit a rational first integral
and are defined by a closed rational $1$-form on a suitable ramified covering
of the initial ambient space. A precise statement is provided in Section  \ref{S:Synthesis} below.

\subsection{Zeros,   poles and invariant algebraic curves of closed rational $1$-forms}
We start things off by observing that the divisor of poles of  closed rational \mbox{$1$-forms}
defining reduced foliations do not intersect the invariant algebraic curves not contained in it.

\begin{lemma}\label{L:zerosandpoles}
Let $\omega$ be a closed rational $1$-form on a projective surface $S$
and consider the foliation $\F$ defined by it.
If $\F$ is reduced in Seidenberg's sense
then any germ of irreducible  $\F$-invariant curve  is either contained in $(\omega)_{\infty}$ or does not intersect it.
In particular,  the zero divisor  $(\omega)_0$ does not intersect the polar divisor $(\omega)_{\infty}$.
\end{lemma}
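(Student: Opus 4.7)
The plan is to reduce the ``in particular'' statement to the main assertion and then derive the main assertion by a local analysis at a hypothetical intersection point, forcing either a logarithmic obstruction or the failure of Seidenberg reducedness. For the reduction: if $C = \{f = 0\}$ is a local component of $(\omega)_0$, write $\omega = f^k \eta$ with $\eta$ not vanishing on $C$ and $k \geq 1$; closedness of $\omega$ gives $k\, df \wedge \eta + f\, d\eta = 0$, whence $df \wedge \eta \equiv 0 \pmod f$, so $C$ is $\F$-invariant. A symmetric computation with $\omega = \eta/f^k$ shows that components of $(\omega)_\infty$ are $\F$-invariant as well. It therefore suffices to prove that no invariant germ $C \not\subset (\omega)_\infty$ can meet $(\omega)_\infty$.

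Assume for contradiction $p \in C \cap (\omega)_\infty$ with $C$ an $\F$-invariant germ not contained in $(\omega)_\infty$. Since a regular point of $\F$ admits only one leaf, $p$ must be a singular point. As $\F$ is Seidenberg reduced, the analytic invariant germs at $p$ are separatrices, smooth and at most two in number. If two components of $(\omega)_\infty$ meet at $p$ they exhaust the separatrices, so $C$ coincides with one of them, a contradiction; similarly, in the saddle-node case with divergent weak separatrix only the strong separatrix is an analytic invariant germ and the same conclusion applies. Otherwise we pick coordinates $(x, y)$ at $p$ with $\{x = 0\}$ the local component of $(\omega)_\infty$ and $\{y = 0\} = C$, so that $\omega$ has poles on this neighborhood only along $\{x = 0\}$.

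The local structure of closed meromorphic $1$-forms with pole along a smooth divisor gives $\omega = \lambda_0\, \frac{dx}{x} + dH$, with $\lambda_0 = \Res_{\{x = 0\}} \omega$ constant and $H$ meromorphic with poles confined to $\{x = 0\}$ of order at most $k - 1$ (where $k$ is the pole order of $\omega$ along $\{x = 0\}$). The invariance of $\{y = 0\}$ is the condition $\omega \wedge dy \equiv 0 \pmod y$, equivalently $H_x(x, 0) = -\lambda_0/x$; integrating yields $H(x, 0) = -\lambda_0 \log x + c$, so meromorphicity forces $\lambda_0 = 0$.

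With $\lambda_0 = 0$ we have $\omega = dH$ locally. If $k = 1$ then $H$ is holomorphic and $\omega$ is regular at $p$, contradicting $p \in (\omega)_\infty$. If $k \geq 2$, invariance forces $H|_{y = 0}$ to be a constant $c$; if $c = \infty$, the pole of $H$ extends along $\{y = 0\}$, contradicting the confinement of $H$'s poles to $\{x = 0\}$. If $c \in \C$, write $H = c + yG$ with $G = F/x^{k-1}$ and $F$ holomorphic, $F(0, 0) \neq 0$, and a direct linearization of $\omega = d(yG)$ at $(0, 0)$ exhibits a singularity of $\F$ with eigenvalue ratio $k - 1 \in \Z_{\geq 1}$, a positive rational, contradicting Seidenberg reducedness. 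The principal obstacle is isolating this final subcase, where a naive pole-chasing argument fails and the reducedness hypothesis must be brought in.
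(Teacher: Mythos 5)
Your proof takes essentially the same route as the paper's: reduce to the main assertion, observe that the intersection point is a reduced singularity, use the bound of two analytic separatrices at a reduced singularity to force the polar locus to be smooth at $p$ and to pin down local coordinates, then write $\omega$ as a residue term plus an exact meromorphic term and exhibit a non-reduced singularity. The paper phrases the local normal form as $\omega = \lambda\,\frac{df}{f} + d(g/f^k)$ with $g$ coprime to $f$ and does not bother to kill $\lambda$, going straight to a case split on the linear parts of $f$ and $g$; your observation that the residue $\lambda_0$ must vanish (by the $\log x$ obstruction) is a harmless cleanup that buys you a slightly tighter grip on the exact part, but the endgame is the same.

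One small imprecision to flag: you assert $F(0,0)\neq 0$ without justification, and this need not hold — for instance $H-c$ may vanish to order $a\ge 2$ along $\{y=0\}$, in which case $F$ is divisible by $y$ and $F(0,0)=0$; more generally $F$ could vanish at the origin without vanishing identically on either axis. In these degenerate cases the linear part of the $1$-form $x^k\omega = xF\,dy + xy\,dF - (k-1)yF\,dx$ is nilpotent (or, after removing a power of $y$, still has a positive rational eigenvalue ratio), which again contradicts reducedness — so your conclusion survives, but the sentence ``a direct linearization exhibits eigenvalue ratio $k-1$'' should be replaced by a case distinction, or you should argue directly that the linear part is always of the form $a\,x\,dy - b\,y\,dx$ with $a,b\ge 0$ not both nonzero and irrational, which is impossible here.
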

\begin{proof}	
	Since $\omega$ is closed it is clear that any irreducible component of $(\omega)_0$ or of $(\omega)_{\infty}$ is $\F$-invariant. Thus the last conclusion of the statement follows immediately from the first.

	Let $C$ be a germ of irreducible $\F$-invariant curve not contained in $(\omega)_{\infty}$.
Aiming at a contradiction assume $C$ is a germ centered at  a point $p \in |(\omega)_{\infty}|$.  Notice that $p \in \sing(\F)$.

Since there are at most two germs of $\F$-invariant curves centered at reduced singularity, it turns out that the polar locus is smooth at
$p$. Thus, at a neighborhood of  $p$, we can write $\omega$  as
\[
\lambda \frac{df}{f} + d\left( \frac{g}{f^k} \right)
\]
where $\lambda \in \mathbb C$, $k\in \mathbb N$,   $f$ is an irreducible germ of  holomorphic function cutting out  $(\omega)_{\infty}$, and $g$ is a holomorphic germ
relatively prime to $f$.

A simple computation, using that $p \in \sing(\F)$, shows that $g(p)=0$.
If the linear parts of  $f$ and $g$ at $p$ are proportional (in particular if $g$ has trivial linear part) then the singularity is nilpotent. If instead  the linear parts of $f$ and $g$ at $p$
are linearly independent then we obtain a non-reduced singularity with quotient of eigenvalues equal to $k^{\pm 1}$. All possibilities  contradict our assumptions.
\end{proof}

\subsection{Quasi-abelian varieties and Iitaka-Albanese morphism}
For further use we briefly review  Iitaka’s theory of quasi-Albanese maps \cite{MR0429884}.

Let $X$ be a projective manifold and $D$ a simple normal crossing divisor on $X$. The Iitaka-Albanese variety
of the pair $(X,D)$ is by definition the quotient of $H^0(X, \Omega^1_X(\log D))^*$ by the image of the map
\begin{align*}
H_1(X-D,\mathbb Z) & \longrightarrow H^0(X, \Omega^1_X(\log D))^* \\
\gamma & \mapsto \left( \alpha \mapsto \int_{\gamma} \alpha \right) \, .
\end{align*}
We will denote the Iitaka-Albanese variety by $\Alb(X,D)$. It is a connected abelian algebraic group
which comes with a surjective morphism  to the Albanese variety of $X$ with fibers isomorphic to
algebraic tori $(\mathbb G_m)^r \simeq (\mathbb C^*)^r$ of dimension $r = h^1(X-D, \mathbb C) - h^1(X,\mathbb C)$.

The Iitaka-Albanese variety is an example of a quasi-abelian variety (in the sense of Iitaka). By definition a quasi-abelian variety is an  abelian connected
 algebraic group  which fibers over an abelian variety with algebraic tori as fibers.

The holomorphic  map
\begin{align*}
 X - D & \longrightarrow \Alb(X,D) \\
x &\mapsto \left( \alpha  \mapsto \int_{x_0}^x \alpha \right)
\end{align*}
extends to a rational map $\alb(X,D) : X \dashrightarrow \Alb(X,D)$. This is the
Iitaka-Albanese map of the pair $(X,D)$. It depends on the choice of a base point $x_0 \in X-D$ but
any two choices lead to maps that differ by translations. The image of $\alb(X,D)$ is non-degenerate, i.e. is not contained in any proper subgroup of $\Alb(X,D)$.

One of the key properties of $\alb(X,D)$ is that the pull-back through it of the translation invariant forms on $\Alb(X,D)$
coincides with $H^0(X, \Omega^1_X(\log D))$. It will be convenient to denote the translation invariant holomorphic $1$-forms
on a quasi-abelian variety $A$ by $\h0A$.

Let $A$ be a quasi-abelian variety. By an automorphism of a quasi-abelian variety
we mean a biholomorphic map which respects the group law. We will denote the group of automorphisms of $A$  by $\Aut(A)$ .
 If $G \subset \Aut(A)$ is a finite group we will denote by $A_G$ the set of points in $A$ with non-trivial $G$-isotropy group.
In other words $x \in A$ belongs to $A_G$ if and only if there exists an element $g \in G$ different from the identity such that $g(x)=x$.

The next two results are stated  in \cite{MR1703549} for Abelian varieties.
The proofs easily adapt to the more general case of quasi-abelian varieties.
We start with \cite[Theorem 2.1]{MR1703549}.

\begin{thm}\label{T:autA}
	Let $A$ be a quasi-abelian variety and $G=<\alpha>$ a cyclic group of automorphisms.
	Suppose $1 \le d_1 < d_2 < \ldots < d_r$ are the orders of eigenvalues of $\alpha^* : \h0A \to \h0A$.
	Then there are $G$-stable quasi-abelian subvarieties $A_1, \ldots, A_r$ of $A$ such that
	\begin{enumerate}
		  \item $\alpha_i  =\alpha_{|A_i}$ is of order $d_i$;
		  \item $(A_1)_{<\alpha_1>}$ is $A_1$ if $d_1=1$, otherwise $(A_1)_{<\alpha_1>}$ is finite;
		  \item for $i>1$, $(A_i)_{<\alpha_i>}$ is always finite;
		  \item the addition map $A_1 \times \cdots \times A_r \to A$ is an isogeny.
	\end{enumerate}
\end{thm}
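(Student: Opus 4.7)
My plan is to mimic the proof strategy for abelian varieties from \cite{MR1703549}, working with the canonical uniformization of a quasi-abelian variety $A$ as a quotient $V^*/\Lambda$, where $V = \h0A$ is the space of translation invariant holomorphic $1$-forms, $V^*$ is its complex dual, and $\Lambda \subset V^*$ is the discrete subgroup obtained as the image of $H_1(A,\Z)$ under the integration pairing (of rank $\dim A + \dim \Alb(A)$ in general, reducing to $2\dim A$ in the abelian case). Since $\alpha$ respects the group law, it lifts to a linear automorphism of $V^*$ preserving $\Lambda$, and the transpose of this linear map is precisely $\alpha^* : V \to V$. The task thereby becomes to decompose the triple $(V,V^*,\Lambda)$ compatibly with the eigenvalue orders $d_1 < \cdots < d_r$.

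First I would decompose $V = \bigoplus_{i=1}^r V_i$, where $V_i$ is the kernel of $\Phi_{d_i}(\alpha^*)$ and $\Phi_{d_i}$ is the $d_i$-th cyclotomic polynomial. Because $\alpha$ has finite order $\alpha^*$ is semisimple, so $V_i$ is precisely the sum of eigenspaces for primitive $d_i$-th roots of unity. Since $\Phi_{d_i} \in \Z[t]$ and $\alpha^*$ preserves the natural $\Q$-structure on $V$ inherited from $\Lambda \otimes \Q \subset V^*$ via duality, the decomposition is defined over $\Q$. Hence $\Lambda \otimes \Q = \bigoplus_i(V_i^* \cap (\Lambda \otimes \Q))$, and the intersections $\Lambda_i := V_i^* \cap \Lambda$ have maximal rank inside $V_i^*$; in particular they are commensurable with the projections of $\Lambda$ to the factors $V_i^*$.

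Setting $A_i := V_i^*/\Lambda_i$ yields $G$-stable connected complex Lie subgroups of $A$ whose Lie algebras span $V^*$, and the addition map $A_1 \times \cdots \times A_r \to A$ has finite kernel isomorphic to $(\bigoplus_i \Lambda_i)/\Lambda$. To upgrade each $A_i$ from a complex Lie group to a quasi-abelian variety I would use that the decomposition is $\Q$-rational with respect to the period lattice to split both the Albanese fibration $A \to \Alb(A)$ and its torus fibers equivariantly, following the strategy of Poincar\'e reducibility adapted to the Iitaka setting of \cite{MR0429884}: a polarization on $\Alb(A)$ yields a splitting of the abelian part, and the cocharacter lattice of the torus part inherits a compatible rational splitting. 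This proves (4). For (1)--(3), by construction $\alpha_i^* = \alpha^*|_{V_i}$ is semisimple of order exactly $d_i$, and under $A_i \hookrightarrow A$ the pullback identifies $\ih0A$ with $V_i$, so $\alpha_i$ itself has order $d_i$, giving (1). For the fixed-point statements, for any automorphism $\beta$ of a quasi-abelian variety $B$ the identity component of $\Fix(\beta)$ has Lie algebra $\ker(\mathrm{id} - \beta^*)$; when $d_i > 1$ the eigenvalues of $\alpha_i^*$ are primitive $d_i$-th roots of unity, so $\mathrm{id} - \alpha_i^*$ is invertible and thus $\Fix(\alpha_i) \supset (A_i)_{\langle\alpha_i\rangle}$ is finite, yielding (3) and the nontrivial alternative of (2); when $d_1 = 1$ the restriction $\alpha_1$ is the identity and $(A_1)_{\langle\alpha_1\rangle} = A_1$.

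The main obstacle I anticipate is precisely the algebraicity step for the $A_i$: in the compact (abelian) case Poincar\'e reducibility, powered by a polarization, directly produces the complementary subvariety, whereas for quasi-abelian varieties one must additionally verify that the cyclotomic decomposition of $V$ is compatible with the extension structure expressing $A$ as a torus bundle over $\Alb(A)$. Carrying this out, using an equivariant smooth compactification of $A$ together with the induced splitting of logarithmic differentials, is what allows the complex analytic decomposition to be promoted to an isogeny of algebraic groups and makes the adaptation of \cite[Theorem 2.1]{MR1703549} go through.
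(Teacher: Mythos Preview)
Your overall strategy is close to that of \cite{MR1703549}, but the paper explicitly flags (in the remark immediately following the theorem) that the argument must \emph{not} pass through Poincar\'e complete reducibility: the analogue of that principle fails for quasi-abelian varieties in general, cf.\ \cite[Example 2.4]{MR1312575}. The route the paper points to constructs the $A_i$ directly as identity components of fixed loci of iterates of $\alpha$, which are algebraic subgroups from the outset. In your language this is the one observation you are missing: your $V_i^*/\Lambda_i$ \emph{is} the identity component of $\ker\Phi_{d_i}(\alpha)$ inside $A$, and since $\Phi_{d_i}(\alpha)$ is an algebraic endomorphism of the algebraic group $A$, its kernel is automatically a closed algebraic subgroup, hence a quasi-abelian subvariety. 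That single line replaces your entire ``algebraicity step'' and renders the appeal to an adapted Poincar\'e reducibility, equivariant compactifications, and splittings of the Albanese fibration unnecessary. As written, your proposed handling of that step is the weak point: it is vague, and it leans precisely on the tool the paper warns does not carry over.

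Two smaller issues. First, for a genuine quasi-abelian variety $\Lambda$ has rank $\dim A+\dim\Alb(A)<2\dim A$, so $\Lambda\otimes\Q$ does not span $V^*$ and there is no $\Q$-structure on $V$ in the sense you invoke; what you actually need (and have) is only the cyclotomic decomposition $\Lambda\otimes\Q=\bigoplus_i\ker\bigl(\Phi_{d_i}(\alpha_*)|_{\Lambda\otimes\Q}\bigr)$, which suffices to show $\bigoplus_i\Lambda_i$ has finite index in $\Lambda$. Second, your argument for (3) is incomplete: $(A_i)_{\langle\alpha_i\rangle}$ is the union of $\Fix(\alpha_i^k)$ over all $1\le k<d_i$, not just $\Fix(\alpha_i)$, and your inclusion $\Fix(\alpha_i)\supset(A_i)_{\langle\alpha_i\rangle}$ is written backwards. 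The repair is immediate once noticed: for $0<k<d_i$ the eigenvalues of $(\alpha_i^*)^k$ are $k$-th powers of primitive $d_i$-th roots of unity, hence never equal to $1$, so each $\Fix(\alpha_i^k)$ is finite.
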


\begin{remark}
The proof of the Theorem \ref{T:autA}  does not rely on Poincar\'{e}'s complete reducibility Theorem.
It constructs the abelian/quasi-abelian subvarieties $A_i$ concretely by looking at fixed points
of iterates of $\alpha$, see \cite{MR1703549} for details.
Beware that the  obvious analogue of Poincar\'{e}'s complete reducibility theorem does not hold for
quasi-abelian varieties in general. There exists a quasi-abelian  variety $A$ containing a quasi-abelian
subvariety $B$ for  which there is no quasi-abelian subvariety $C \subset A$ such that $A$ is isogeneous to
$B \times C$, cf. \cite[Example 2.4]{MR1312575}. Nevertheless if $B \subset A$ is an abelian (not just quasi-abelian)
subvariety of a quasi-abelian variety $A$ then there exists a quasi-abelian subvariety $C$ such that $A$ is isogeneous
to $B \times C$, see \cite[Proposition 2.3]{MR1312575}.
\end{remark}

Denote by $\varphi$ the Euler totient function, i.e. $\varphi(d)$ is the number of primitive roots
of unity of order $d$. The next result corresponds to \cite[Proposition 1.8]{MR1703549}.

\begin{prop}\label{P:autA}
	Suppose $\alpha$ is an automorphism of order $d$ of a quasi-Abelian variety $A$ of dimension $g$.
	If $A_{<\alpha>}$ is finite then the set $\Phi_{\alpha} = \{ \text{eigenvalues of } \alpha^* : \h0A \to \h0A \}$
    contains $\varphi(d)/2$  distinct and  pairwise  non complex conjugate primitive $d$-th roots of unity.
\end{prop}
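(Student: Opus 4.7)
I would prove Proposition~\ref{P:autA} by combining two viewpoints on the action of $\alpha$: its $\C$-linear action on the Lie algebra $V := \mathrm{Lie}(A)$ and its $\Z$-linear action on the period lattice $\Lambda := H_1(A,\Z)$, and by linking them through the structural extension $1 \to T \to A \to B \to 1$ in which $T$ is the maximal subtorus of $A$ and $B$ is the abelian quotient.

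First I would lift $\alpha$ to a $\C$-linear automorphism $\tilde\alpha : V \to V$ preserving $\Lambda$, and identify $\h0A$ with $V^*$, so that the eigenvalues of $\alpha^*$ on $\h0A$ coincide with those of $\tilde\alpha$ on $V$. The hypothesis that $A_{<\alpha>}$ is finite amounts to $\Fix(\alpha^j) = \ker(\alpha^j - \operatorname{id}_A)$ being finite for every $1 \le j \le d-1$, equivalently $\ker(\tilde\alpha^j - \operatorname{id}_V) = 0$. Hence no eigenvalue $\zeta$ of $\tilde\alpha$ satisfies $\zeta^j = 1$ with $j<d$; combined with $\tilde\alpha^d = \operatorname{id}$, every eigenvalue of $\tilde\alpha$ on $V$ is a \emph{primitive} $d$-th root of unity.

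Next I would extract rational structure from $\Lambda$. Since $\alpha$ is a group automorphism, it preserves the intrinsic subgroup $T$, yielding a $\tilde\alpha$-equivariant exact sequence $0 \to \Lambda_T \to \Lambda \to \Lambda_B \to 0$. The $\C$-linear extension of the inclusion $\Lambda_T \hookrightarrow V_T := \mathrm{Lie}(T)$ is a $\tilde\alpha$-equivariant isomorphism $\Lambda_T \otimes \C \xrightarrow{\sim} V_T$ (because the $\C$-span of $\Lambda_T$ in $V_T$ is everything), whereas for the abelian part the classical Hodge decomposition yields $\Lambda_B \otimes \C \cong V_B \oplus \overline{V_B}$ as $\tilde\alpha$-representations. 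Multiplicativity of characteristic polynomials then gives
\[
P_\Lambda(x) \;=\; P_{V_T}(x)\cdot P_{V_B}(x)\cdot \overline{P_{V_B}(x)} \;\in\; \Z[x],
\]
and since all its roots are primitive $d$-th roots of unity, $P_\Lambda = \Phi_d^{\,m}$ for some integer $m \ge 1$. In particular, every primitive $d$-th root of unity appears with multiplicity exactly $m$ in $P_\Lambda$.

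To conclude, for each primitive $d$-th root $\zeta$ I would let $a_\zeta$ and $b_\zeta$ denote its multiplicities as eigenvalues of $\tilde\alpha$ on $V_B$ and on $V_T$, respectively. The identity $a_\zeta + a_{\bar\zeta} + b_\zeta = m \ge 1$ forces at least one of $a_\zeta$, $a_{\bar\zeta}$, $b_\zeta$ to be positive, and in every case at least one of $\zeta$, $\bar\zeta$ must then appear as an eigenvalue of $\tilde\alpha$ on $V$, i.e. belong to $\Phi_\alpha$. Picking one element from each of the $\varphi(d)/2$ complex-conjugate pairs of primitive $d$-th roots of unity produces the required $\varphi(d)/2$ distinct, pairwise non-conjugate elements of $\Phi_\alpha$. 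The delicate point is the mixed formula for $P_\Lambda$: the torus and abelian factors must be handled asymmetrically, because $\Lambda_T$ relates to $V_T$ by $\C$-linear extension (contributing a single copy of $V_T$) whereas $\Lambda_B$ sits in $V_B$ as an $\R$-lattice (contributing both $V_B$ and $\overline{V_B}$), which is precisely where the quasi-abelian setting requires extra care beyond the classical abelian case of \cite[Proposition 1.8]{MR1703549}.
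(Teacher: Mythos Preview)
Your proof is correct. The paper itself does not prove this proposition: it simply states that the result is \cite[Proposition~1.8]{MR1703549} for abelian varieties and asserts that ``the proofs easily adapt to the more general case of quasi-abelian varieties.'' You have carried out precisely that adaptation, and done so carefully.

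Your argument follows the same underlying idea as the abelian case --- use the integrality of the action on $H_1(A,\Z)$ to force the characteristic polynomial to be a power of the cyclotomic polynomial $\Phi_d$, then read off that each conjugate pair $\{\zeta,\bar\zeta\}$ of primitive $d$-th roots must be represented among the eigenvalues on $V$. The point you single out as ``delicate'' is exactly the new ingredient needed in the quasi-abelian setting: the lattice $\Lambda_T$ of the torus part satisfies $\Lambda_T\otimes\C\cong V_T$ (contributing each eigenvalue once), whereas the lattice $\Lambda_B$ of the abelian quotient satisfies $\Lambda_B\otimes\C\cong V_B\oplus\overline{V_B}$ (contributing conjugate pairs). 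Your formula $P_\Lambda = P_{V_T}\cdot P_{V_B}\cdot\overline{P_{V_B}}$ and the resulting identity $b_\zeta + a_\zeta + a_{\bar\zeta} = m$ handle this asymmetry cleanly. This is a genuine clarification over the paper's bare citation, since it makes explicit why the argument survives the passage from abelian to quasi-abelian varieties.
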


\subsection{Foliations defined by closed rational $1$-forms} We study first the case of foliations defined by closed rational $1$-forms.

\begin{lemma}\label{L:ntkernel}
Let $X$ be a projective manifold and $D$ be a simple normal crossing divisor on $X$.
Suppose  $C \subset X-D$ is a compact curve  such that the restriction morphism $H^1(X -D, \mathbb C) \to H^1(C, \mathbb C)$
has non-trivial kernel. Then the restriction morphism $H^0(X,\Omega^1_X(\log D)) \to H^0(C,\Omega^1_C)$
also has non-trivial kernel.
\end{lemma}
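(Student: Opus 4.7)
The plan is to deduce the lemma from Deligne's theory of mixed Hodge structures on the cohomology of the smooth quasi-projective variety $U := X - D$. The key inputs are the logarithmic de Rham isomorphism $F^1 H^1(U,\mathbb C) = H^0(X,\Omega^1_X(\log D))$ and, assuming $C$ is smooth (otherwise replace $C$ by its normalization $\tilde C \to C$, which only enlarges the kernel of the restriction map on $H^1$), the analogous identification $F^1 H^1(C,\mathbb C) = H^0(C,\Omega^1_C)$. The restriction morphism $\phi : H^1(U,\mathbb C) \to H^1(C,\mathbb C)$ is then a morphism of mixed Hodge structures defined over $\mathbb Z$.

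Next, I would examine the sub-mixed Hodge structure $K := \ker \phi$ of $H^1(U,\mathbb C)$. The only Hodge types appearing in $H^1(U,\mathbb C)$ are $(1,0)$ and $(0,1)$ in $\mathrm{Gr}^W_1$, and $(1,1)$ in $\mathrm{Gr}^W_2$ (the latter coming from residues along the irreducible components of $D$). Because $K$ is defined over $\mathbb Z$, it is stable under complex conjugation, so the type $(1,0)$ is present in $K$ if and only if $(0,1)$ is, while $(1,1)$ is self-conjugate. Hence as soon as $K \neq 0$, the subspace $F^1 K \subset K$, which collects the $(1,0)$ and $(1,1)$ summands, must be non-zero. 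Strictness of morphisms of MHS with respect to the Hodge filtration then gives $F^1 K = K \cap F^1 H^1(U,\mathbb C)$, producing a non-zero $\alpha \in H^0(X,\Omega^1_X(\log D))$ whose restriction to $C$ vanishes in $H^0(C,\Omega^1_C)$.

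The main conceptual obstacle is ruling out that $K$ is pure of type $(0,1)$: it is precisely the real (equivalently rational) structure on $K$, inherited from integral cohomology, that prohibits this, since a purely antiholomorphic Hodge substructure could not coincide with its own complex conjugate. The only subtlety one must take care of is the reduction to the smooth case, which is handled cleanly by pulling back along the normalization map and using that the composition $\tilde C \to C \hookrightarrow U$ gives a restriction map whose kernel contains that of $H^1(U,\mathbb C) \to H^1(C,\mathbb C)$.
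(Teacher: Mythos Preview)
Your argument is correct and is in fact the cleanest way to prove the lemma: once one knows that the kernel $K$ of a morphism of mixed Hodge structures is again a mixed Hodge structure, the count of Hodge numbers you indicate ($\dim F^1 K = h^{1,0}(\mathrm{Gr}^W_1 K) + \dim \mathrm{Gr}^W_2 K$, together with $h^{1,0}=h^{0,1}$ on the weight-$1$ piece by the real structure) forces $F^1K\neq 0$ whenever $K\neq 0$, and the identification $F^1H^1(U,\mathbb C)=H^0(X,\Omega^1_X(\log D))$ finishes. The paper proceeds differently and more concretely: it first disposes of the case where a \emph{holomorphic} form on $X$ already lies in the kernel, then uses the real decomposition $H^1(X-D,\mathbb C)=H^0(X,\Omega^1_X(\log D))\oplus \sqrt{-1}\,H^1(X,\mathbb R)$ to force the existence of a log form with nonzero residues, translates this via the residue exact sequence into a divisor $E$ supported on $D$ with $c_1(E)=0$, and finally builds an explicit logarithmic form $\omega_E$ with purely imaginary periods whose restriction to $C$ vanishes by an external result on such forms. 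Your approach is shorter and invokes only the abelian-category structure of MHS; the paper's approach has the virtue of producing a rather explicit element of the kernel (essentially $d\log$ of a unitary flat section), which fits the constructive spirit of the subsequent corollaries even though it is not strictly needed there.
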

\begin{proof}
If the kernel of  $H^0(X, \Omega^1_X) \to H^0(C, \Omega^1_C)$  is non-trivial   then there is nothing to prove.
Assume the contrary and notice that the kernel of
$H^1(X,\mathcal O_X) \to H^1(C, \mathcal O_C)$ is also trivial thanks to
the functoriality of  Hodge decomposition $H^1(X,\mathbb C) \simeq H^0(X,\Omega^1_X) \oplus H^1(X, \mathcal O_X)$.

Hodge Theory gives the existence of a  decomposition, as real vector spaces, $H^1(X-D,\mathbb C) = H^0(X, \Omega^1_X(\log D)) \oplus  \sqrt{-1} H^1(X,\mathbb R)$, see for instance \cite[Proposition 3.6]{MR3035328}
\cite[proof of Proposition V.1.4]{MR1487227}.
Therefore the non-injectivity of $H^1(X -D, \mathbb C) \to H^1(C, \mathbb C)$ implies the existence of a logarithmic differential with poles on $D$
and non-zero residues. From the long exact sequence in cohomology deduced from
\[
 0 \to \Omega^1_X \to \Omega^1_X(\log D) \to \oplus \mathcal O_{D_i} \to 0
\]
we infer that the first boundary map $\oplus H^0(D_i, \mathcal O_{D_i}) \to H^1(X,\Omega^1_X)$  also has non-trivial kernel.
Since this boundary map is nothing but the Chern class of the corresponding divisor, we obtain a divisor $E \neq 0$ with vanishing Chern class and
support contained in the support of $D$. If we consider the unitary flat connection
on $\mathcal O_X(E)$ and pull it back to $\mathcal O_X$ through a section $s$ of $\mathcal O_X(E)$ such that $E = (s)_0 - (s)_{\infty}$, we obtain a
logarithmic differential on $X$ with poles on $E$ such that all its periods (including the ones around irreducible components of $E$) are
purely imaginary.  In the notation of \cite[Section 3]{MR2177196}, this is the unique logarithmic  $1$-form $\omega_E$ with $\Res(\omega_E) = E$
and purely imaginary periods. We can apply \cite[Proposition 3.3]{MR2177196} to conclude that $\omega_E$ vanishes identically when pulled-back to
$C$. This implies the non-injectivity of $H^0(X,\Omega^1_X(\log D)) \to H^0(C,\Omega^1_C)$.
\end{proof}

\begin{prop}\label{P:chave}
Let $\mathcal F$ be a reduced foliation on a projective surface $X$ defined by  a closed rational $1$-form $\omega$ with polar divisor $D$.
Assume $\mathcal F$ does not admit a rational first integral and let $C$ be the maximal compact $\mathcal F$-invariant
algebraic curve contained in $X-D$. Then there exists a quasi-abelian variety $A$ and a non-constant morphism from $r: X-D \to A$
with image not contained in the translate of any proper quasi-abelian subvariety
which  contracts all the irreducible components of $C$ to points.
\end{prop}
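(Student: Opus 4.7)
The plan is to produce the map $r$ by applying the Iitaka--Albanese formalism to the subspace
\[
V \;:=\; \ker\Bigl( H^0(X,\Omega^1_X(\log D)) \longrightarrow \textstyle\bigoplus_i H^0(\tilde C_i,\Omega^1_{\tilde C_i}) \Bigr)
\]
of logarithmic $1$-forms vanishing on every irreducible component $C_i$ of $C$ (where $\tilde C_i$ denotes the normalization). After blowing up $X$ so that $D$ has simple normal crossings (an operation that leaves $X-D$ and its $\mathcal F$-invariant compact curves unchanged), and once we know $V\neq 0$, we set $A := V^*/\Lambda$ where $\Lambda$ is the image in $V^*$ of the period map $\gamma \mapsto (\alpha\mapsto\int_\gamma\alpha)$ on $H_1(X-D,\Z)$, and $r(x) := (\alpha\mapsto\int_{x_0}^x\alpha)+\Lambda$ for a chosen basepoint $x_0\in X-D$. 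Dually to the inclusion $V\hookrightarrow H^0(X,\Omega^1_X(\log D))$, the variety $A$ is realised as a quotient of $\Alb(X,D)$ and therefore inherits the structure of a quasi-abelian variety, and $r$ is the composition $X-D \xrightarrow{\alb(X,D)} \Alb(X,D) \twoheadrightarrow A$.

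The decisive step is therefore to verify $V\neq 0$, for which I appeal to Lemma~\ref{L:ntkernel} applied to the (possibly reducible) compact curve $C\subset X-D$. Its hypothesis amounts to exhibiting a nonzero element in the kernel of $H^1(X-D,\mathbb C)\to H^1(C,\mathbb C)$, and the class $[\omega]$ of $\omega$ itself serves. Indeed, $\mathcal F$-invariance of each $C_i$ makes $\omega|_{C_i}=0$ pointwise, so $[\omega|_C]=0$ in $H^1(C,\mathbb C)$. That $[\omega]\neq 0$ in $H^1(X-D,\mathbb C)$ is forced by the nonexistence of a rational first integral: decompose $\omega=\omega_{\log}+dg$ with $g$ rational and $\omega_{\log}$ of logarithmic type; if $[\omega]=0$ then all residues of $\omega_{\log}$ vanish, so $\omega_{\log}$ is in fact a closed holomorphic $1$-form on $X$; by the Gysin injection $H^1(X,\mathbb C)\hookrightarrow H^1(X-D,\mathbb C)$, the equality $[\omega_{\log}]=[\omega]=0$ descends to $H^1(X)$, forcing $\omega_{\log}=0$ on compact $X$; then $\omega=dg$ and $g$ is a rational first integral, contradicting the hypothesis.

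Once $V\neq 0$, the remaining properties of $r$ are immediate. It is non-constant because $V\hookrightarrow H^0(X-D,\Omega^1_{X-D})$ injects; it contracts each $C_i$ because the derivative $dr|_{TC_i}$ is the restriction map $V\to (TC_i)^*$, which is identically zero by the defining property of $V$; and its image is non-degenerate in $A$, for a translate of any proper quasi-abelian subvariety $B\subsetneq A$ containing it would yield a nonzero subspace of $V$ pulling back to zero on $X-D$, again contradicting injectivity. (That $C$ is a genuine algebraic curve, that is, has only finitely many components, follows from the absence of a rational first integral: an infinite family of invariant algebraic curves would produce one.) The only nontrivial matter I anticipate is to verify Lemma~\ref{L:ntkernel} in its application to reducible $C$; this should follow from its Hodge-theoretic proof upon replacing $H^0(C,\Omega^1_C)$ with the corresponding direct sum on the normalization, but one has to check that the Hodge and Gysin ingredients carry over verbatim to the reducible setting.
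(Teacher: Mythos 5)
Your argument reproduces the paper's proof: you construct $A$ as the cokernel of $\Alb(C)\to\Alb(X,D)$ (equivalently $V^*/\Lambda$), apply Lemma~\ref{L:ntkernel} to $C$, and identify $[\omega]$ as a nonzero class in $\ker\bigl(H^1(X-D,\mathbb C)\to H^1(C,\mathbb C)\bigr)$ via the absence of a rational first integral. The details you supply beyond the paper's terse proof (the explicit verification that $[\omega]\neq 0$ using the logarithmic-plus-exact decomposition and the Gysin injection, the SNC reduction for $D$, and the non-degeneracy check for the image of $r$) are correct, and your closing caveat about Lemma~\ref{L:ntkernel} for reducible $C$ is a fair flag but not a gap in your own argument, since the lemma is stated for arbitrary compact curves.
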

\begin{proof}
We will construct $r: X - D \to A$ as a quotient of the Iitaka-Albanese morphism of $X- D$.
Consider the representation  of $\pi_1(X-D)$ in $\mathbb C$ defined by integration of $\omega$. Since $\mathcal F$
does not admit a rational first integral this representation must be infinite. Therefore $H^1(X-D,\mathbb C)$ is
infinite as well. Consider the restriction morphism $H^1(X -D, \mathbb C) \to H^1(C, \mathbb C)$. Since $C$ is $\mathcal F$
invariant the representation defined by $\omega$ is in the kernel of this morphism.

Lemma \ref{L:ntkernel} implies that the restriction morphism $H^0(X,\Omega^1_X(\log D)) \to H^0(C,\Omega^1_C)$ has non-trivial kernel.
Dualizing it, we deduce that
\[
H^0(C,\Omega^1_C)^* \to H^0(X,\Omega^1_X(\log D))^*
\]
is not surjective. Therefore the induced morphism
\[
\Alb(C) = \oplus \Alb(C_i) \to \Alb(X,D)
\]
is not surjective.

We define the quasi-abelian variety $A$ as the cokernel of this morphism.
The Iitaka-Albanese morphism $X- D \to \Alb(X,D)$ composed with the natural projection
$\Alb(X,D) \to A$  gives rise to the sought morphism.
\end{proof}

\begin{cor}\label{C:chave}
Notation and assumptions as in Proposition \ref{P:chave}.   If
$V_C \subset H^0(X, \Omega^1_X(\log D))$ is the vector subspace consisting of logarithmic $1$-forms which are
identically zero  when pulled-back to (all the irreducible components of) $C$,
then the  dimension of $V_C$ is at least one. Moreover, if $\omega \in V_C$  then  $\dim V_C \ge 2$.
\end{cor}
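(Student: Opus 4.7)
The plan is to read off the corollary from the Iitaka–Albanese factorization constructed in the proof of Proposition~\ref{P:chave}, adding a Stein-factorization argument to force a rational first integral in the critical-dimension case.

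First, by definition $V_C$ is the kernel of the restriction morphism $H^0(X,\Omega^1_X(\log D)) \to \bigoplus_i H^0(\tilde C_i,\Omega^1_{\tilde C_i})$, where the $\tilde C_i$ denote the normalizations of the irreducible components of $C$. The argument in the proof of Proposition~\ref{P:chave}, via Lemma~\ref{L:ntkernel}, already produces non-zero elements of this kernel, so $\dim V_C \ge 1$. More precisely, dualizing the exact sequence $\Alb(C) \to \Alb(X,D) \to A \to 0$ and using the canonical identification of $H^0(X,\Omega^1_X(\log D))$ with the translation-invariant $1$-forms on $\Alb(X,D)$, one sees that pullback by $r : X - D \to A$ identifies $V_C$ with the space of invariant $1$-forms on $A$; in particular $\dim V_C = \dim A$.

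For the second assertion, I argue by contradiction: suppose $\omega \in V_C$ and $\dim V_C = 1$. Then $\dim A = 1$, so $A$ is either an elliptic curve or the multiplicative group $\mathbb{G}_m$, and its smooth projective completion $\bar A$ is correspondingly $A$ itself or $\mathbb P^1$. Since $\omega$ generates $V_C$, it coincides up to a scalar with the pullback under $r$ of a nowhere-vanishing translation-invariant $1$-form on $A$. It follows that the leaves of $\mathcal F$ on $X - D$ are the connected components of the fibers of $r$.

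The rational map $r$ extends to a dominant rational map $\bar r : X \dashrightarrow \bar A$. Let $\bar r = f \circ s$ be its Stein factorization, with $f : Y \to \bar A$ finite and $s : X \dashrightarrow Y$ having connected general fiber, for some smooth projective curve $Y$. Then $s$ is a dominant rational map to a curve whose general fiber is the closure of a general leaf of $\mathcal F$, and hence a rational first integral for $\mathcal F$, contradicting the standing hypothesis in Proposition~\ref{P:chave}. The main point requiring care is the identification $V_C \simeq \h0A$ via the cokernel construction; granted that, the rest is a routine Stein-factorization argument.
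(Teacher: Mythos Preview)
Your proposal is correct and follows essentially the same approach as the paper: both identify $V_C$ with $r^*(\h0A)$ via the cokernel construction of Proposition~\ref{P:chave}, and then argue that $\dim A=1$ together with $\omega\in V_C$ would force a rational first integral. The only cosmetic difference is that the paper produces the first integral by composing the rational extension $\overline r:X\dashrightarrow A$ with any non-constant rational function on $A$, whereas you pass to the projective completion $\bar A$ and invoke Stein factorization; the two arguments are equivalent.
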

\begin{proof}
Let $r : X-D \to A$  be the morphism produced by Proposition \ref{P:chave}.
Let $\h0A$ be the vector space of
translation invariant $1$-forms on $A$.
Since the image of $r$ is not contained
in any proper quasi-abelian subvariety it follows that the pull-back map
$ r^* : \h0A \to H^0(X, \Omega^1_X(\log D)) $
is injective. Moreover, by construction, its image coincides exactly with $V_C$. This is sufficient to prove that $\dim V_C \ge 1$.

Suppose now that $\omega \in V_C$. Observe that  $r$ extends to a rational
map $\overline r: X \dashrightarrow A$.  If the dimension of $A$ is one then
the composition of $\overline r$ with any non-constant rational function $f \in \mathbb C( A)$
would be a rational first integral for $\F$. Since we are assuming that such a first integral does not exist,
we conclude that $\dim V_C\ge 2$ when $\omega \in V_C$.
\end{proof}

\begin{prop}\label{P:cotasimples}
Let $\mathcal F$ be a  reduced foliation on a projective surface $X$
defined by a closed rational $1$-form $\omega$ with polar divisor $D$.
If $\mathcal F$ does not admit a rational first integral then there exists a section
of $\KF$ vanishing along the compact curves contained in $X-D$ and invariant by $\mathcal F$.
\end{prop}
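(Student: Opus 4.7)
The plan is to produce the required section of $\KF$ as a wedge product $\omega \wedge \eta$ for a well-chosen logarithmic $1$-form $\eta \in H^0(X, \Omega^1_X(\log D))$. Let $C$ denote the maximal compact $\F$-invariant algebraic curve in $X - D$, and let $V_C \subset H^0(X, \Omega^1_X(\log D))$ be the subspace of logarithmic $1$-forms vanishing on every component of $C$, as in Corollary \ref{C:chave}. The proof of that corollary identifies $V_C$ with $r^*\h0A$, where $r : X - D \to A$ is the morphism to a quasi-abelian variety produced by Proposition \ref{P:chave}.

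The first and hardest step is to pick $\eta \in V_C$ with $\omega \wedge \eta \not\equiv 0$, and it is here that the hypothesis of non-existence of a rational first integral is essential. Arguing by contradiction: if every element of $V_C = r^*\h0A$ were a rational multiple of $\omega$, then every translation-invariant $1$-form on $A$ would pull back via $r$ to a form vanishing along each leaf $L$ of $\F$. Since such forms generate $T^*A$ at every point, $r$ would be locally constant on each leaf, hence factor through a curve; pulling back a non-constant rational function from that curve would then give a rational first integral for $\F$, contrary to hypothesis. A suitable $\eta$ therefore exists.

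Next, I will check that $\omega \wedge \eta$ is a holomorphic section of $\KF = K_X \otimes \NF$. Viewing $\omega$ as a holomorphic section of $\Omega^1_X \otimes \NF$ and $\eta$ as a logarithmic $1$-form, the product $\omega \wedge \eta$ lies a priori in $H^0(X, \KF(D))$. The absence of an actual pole at a generic smooth point of a component $D_0 \subset D$ is a local computation: the $\F$-invariance of $D_0$, automatic for polar components of closed rational $1$-forms, forces the $dy$-coefficient of a local holomorphic representative of $\omega$ to vanish along $D_0$, which cancels the simple pole of $\eta$ in the wedge. Hartogs' theorem then propagates holomorphy across the codimension-$2$ bad locus (intersections of components of $D$ and singularities of $\F$).

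Finally, vanishing along $C$ follows from a symmetric local computation at a generic smooth point of a component $C_0 \subset C$: by Lemma \ref{L:zerosandpoles} one has $C \cap D = \varnothing$, so $\omega$ and $\eta$ are both holomorphic there, and $\F$-invariance of $C_0$ forces a factor of the local equation of $C_0$ in the $dx$-coefficient of $\omega$, while the condition $\eta \in V_C$ forces the same factor in the $dx$-coefficient of $\eta$; this produces the required factor in $\omega \wedge \eta$. Non-triviality of the resulting section of $\KF$ is precisely the condition $\omega \wedge \eta \not\equiv 0$ secured in the first step.
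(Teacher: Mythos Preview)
Your argument is correct and follows the same strategy as the paper: the paper contracts the logarithmic form $\alpha\in V_C$ (provided by Corollary~\ref{C:chave}) against a twisted vector field $v\in H^0(X,TX\otimes\KF)$ defining $\F$, which on a surface is exactly the dual of your wedge-product construction, and your direct argument that $r$ would otherwise factor through a curve is the content of the second half of Corollary~\ref{C:chave}. One notational caution: the closed rational $1$-form $\omega$ is not literally the holomorphic section of $\Omega^1_X\otimes\NF$ defining $\F$ (they differ by a rational section of $\NF$ recording the zeros and higher-order poles of $\omega$), so your phrase ``viewing $\omega$ as\ldots'' conflates the two; the local computations you give are nonetheless valid once interpreted for the twisted defining form, and the paper sidesteps the issue by working with $v$ instead.
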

\begin{proof}
According to Corollary \ref{C:chave} there exists a logarithmic $1$-form $\alpha \in H^0(X, \Omega^1_{X}(\log D))$
such that the foliation defined by $\alpha$ is distinct from the foliation defined by $\omega$ and which vanishes when pulled-back to
every compact curve in $X-D$  invariant by $\mathcal F$. If $v \in H^0(X, TX \otimes  \KF)$
is a twisted vector field defining $\mathcal F$ then the contraction of $\alpha$ and $v$ gives a section $\sigma$ of $\KF$
vanishing along the compact curves contained in $X-D$.
\end{proof}

\begin{cor}
Let $\mathcal F$ be a  degree $d$ foliation on $\mathbb P^2$ given by a closed rational $1$-form $\omega$.
If $\mathcal F$ does not admit a rational first integral then the (reduced) support of the  zero divisor of  $\omega$
has degree at most $d-1$.
\end{cor}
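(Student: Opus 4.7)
The plan is to read off this bound directly from Proposition \ref{P:cotasimples}. Since $\KF \simeq \mathcal O_{\mathbb P^2}(d-1)$, any nonzero section of $\KF$ has zero divisor of degree $d-1$; it therefore suffices to exhibit a section of $\KF$ whose zero locus contains the reduced support of $(\omega)_0$. The only gap to bridge is that Proposition \ref{P:cotasimples} requires the foliation to be reduced in Seidenberg's sense, whereas $\mathcal F$ need not be, so I would begin with a short descent through a resolution of singularities.

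Fix a Seidenberg reduction $\pi : Y \to \mathbb P^2$, so that $\tilde{\mathcal F} := \pi^*\mathcal F$ is reduced, and set $\tilde\omega := \pi^*\omega$. The form $\tilde\omega$ is still closed and defines $\tilde{\mathcal F}$, and $\tilde{\mathcal F}$ inherits from $\mathcal F$ the non-existence of a rational first integral. Each irreducible component of the reduced support $C := |(\omega)_0|$ is $\mathcal F$-invariant (as $\omega$ is closed), so its strict transform $\widetilde C \subset Y$ is a compact $\tilde{\mathcal F}$-invariant curve; since $\widetilde C$ is not exceptional and is not contained in $(\tilde\omega)_\infty$, Lemma \ref{L:zerosandpoles} applied to $\tilde\omega$ on $Y$ gives $\widetilde C \cap (\tilde\omega)_\infty = \emptyset$. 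Proposition \ref{P:cotasimples} then produces a section $\sigma \in H^0(Y, K_{\tilde{\mathcal F}})$ vanishing along $\widetilde C$.

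The last step is to push $\sigma$ down to $\mathbb P^2$. Standard formulas for the transformation of the foliated canonical bundle under blow-ups at singular points yield $K_{\tilde{\mathcal F}} = \pi^* \KF - F$ for some effective $\pi$-exceptional divisor $F$, so $\sigma$ can be viewed as a section of $\pi^*\KF$ vanishing along $F$; the projection formula $\pi_*\pi^*\KF = \KF$ then produces $\pi_*\sigma \in H^0(\mathbb P^2, \KF) = H^0(\mathbb P^2, \mathcal O_{\mathbb P^2}(d-1))$. Since $\pi$ is an isomorphism off a finite set and $\widetilde C$ is the strict transform of $C$, the divisor of $\pi_*\sigma$ still contains $C$, and the bound $\deg C \le d-1$ follows at once.

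The only real point of care in this argument is the behaviour of $\KF$ under Seidenberg reduction (i.e.\ checking that the exceptional coefficients appearing in the difference $K_{\tilde{\mathcal F}} - \pi^*\KF$ are non-positive), which is a routine case-by-case computation based on the dicritical/non-dicritical dichotomy and on the algebraic multiplicity of the singularities being reduced. Everything else is a direct unwinding of Proposition \ref{P:cotasimples} in the case $X = \mathbb P^2$.
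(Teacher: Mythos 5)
Your proof is correct and follows the same route as the paper: pass to a Seidenberg reduction so that Lemma \ref{L:zerosandpoles} separates the zero and polar divisors of the pulled-back closed form, invoke Proposition \ref{P:cotasimples} to obtain a section of $K_{\tilde{\mathcal F}}$ vanishing on the relevant invariant curves, and push it down using $K_{\tilde{\mathcal F}} = \pi^*\KF - F$ with $F$ effective. You have merely made explicit the sign check on the exceptional coefficients and the descent via the projection formula, which the paper leaves implicit in the phrase ``this section descends.''
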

\begin{proof}
Let $\pi: (Y, \mathcal G) \to (\mathbb P^2, \mathcal F)$ be a reduction of singularities of $\mathcal F$.
The $1$-form $\pi^* \omega$ defines $\mathcal G$. According to Lemma \ref{L:zerosandpoles} the zero divisor of $\pi^*\omega$ is disjoint from the polar locus of $\pi^* \omega$. Proposition \ref{P:cotasimples} gives a section $\sigma$ of $\KG$ vanishing along the zero divisor of $\pi^* \omega$. This section descends to a section
of $\KF$ vanishing along the zero divisor of $\omega$. To conclude
it suffices to observe that $\KF = \mathcal O_{\mathbb P^2}(d-1)$.
\end{proof}

\subsection{Finding invariant symmetric differentials} To reduce the case of foliations defined by closed rational $1$-forms after a ramified covering to the case just studied, we will now show how to produce logarithmic symmetric differentials vanishing along the compact curves. In the case $\omega \notin V_C$ (notation as in the previous subsection) this is achieved through a direct application of the result below which is due to  D. Speyer, see  \cite{136310}.

\begin{thm}\label{T:Speyer}
Let $2\le m \in \mathbb N$ be a natural number and   $\Phi$ be a subset of the set $P(m)\subset \mathbb C^*$ of primitive $m$-th roots of the unity. If
 $P(m)$ is the disjoint  union of $\Phi$ and  $\Phi^{-1}$ then
\[
 1 \in  \Phi^N = \underbrace{\Phi \cdot   \cdots  \cdot \Phi}_{N \, \mathrm{ times }} \,
\]
for some $N\le 6$.
\end{thm}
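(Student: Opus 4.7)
My plan is to pass to an additive reformulation and argue by case analysis on the length $N$. Write $\Phi = \{\zeta^a : a \in A\}$ for a primitive $m$-th root $\zeta$ and $A \subset (\Z/m\Z)^*$; the hypothesis $P(m) = \Phi \sqcup \Phi^{-1}$ becomes $A \sqcup (-A) = (\Z/m\Z)^*$, and in particular $m \ge 3$ (for $m = 2$ the unique primitive root $-1$ is self-inverse, making the hypothesis vacuous). The conclusion to prove is: there exists a multiset $\{a_1, \ldots, a_N\}$ in $A$ with $N \le 6$ and $a_1 + \cdots + a_N \equiv 0 \pmod m$.

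First, $N = 2$ is ruled out at once: $a + b \equiv 0$ would give $b = -a \notin A$. For $N = 3$, I would ask whether some pairwise sum $a+b$ of elements of $A$ lies in $-A$; if yes, we are done. If no, then since $a + b \ne 0$, each pairwise sum is either a unit in $A$ or a non-unit modulo $m$. The universal version of the first alternative (that $A$ is closed under unit sums) cannot persist, since iterating $a \mapsto 2a \mapsto 3a \mapsto \ldots$ builds an ever-growing subset of $A$ that must eventually hit either a non-unit of $\Z/m\Z$ or an element of $-A$, yielding the desired short vanishing combination.

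Extending the same dichotomy to triple and quadruple sums, I expect the problem to reduce to a short list of exceptional moduli $m$---essentially those with $\varphi(m)$ very small---for which the bound can be verified by direct enumeration. The extremal examples $m = 6$ with $A = \{1\}$ and $m = 12$ with $A = \{1, 7\}$, where $1 + 1 + 1 + 1 + 1 + 7 \equiv 0 \pmod{12}$, show that $N \le 6$ is sharp and pinpoint how tight the case analysis needs to be.

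The main obstacle I foresee is handling cleanly the situation where many pairwise and triple sums of elements of $A$ are non-units modulo $m$: one must control the reductions of $A$ modulo each prime factor of $m$ simultaneously. A clean organization is to dispatch all $m$ with $\varphi(m) \le 8$ by direct enumeration, and for larger $m$ to appeal to an additive-combinatorial lower bound (Cauchy--Davenport or Kneser type) on $|A + A + A| \pmod m$, forcing this sumset to either contain $0$ or meet $-A$ and thereby producing a vanishing sum of length at most $4$.
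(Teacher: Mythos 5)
Your reformulation to $A\sqcup(-A)=(\mathbb Z/m\mathbb Z)^*$ and the easy eliminations are sound, but both steps that would have to carry the proof are left open, and the suggested fix does not close the gap. The ``iterate $a\mapsto 2a\mapsto 3a\mapsto\cdots$'' observation does not bound $N$: landing on a non-unit does not by itself produce a vanishing sum of elements of $A$, and landing in $-A$ could occur only after far more than six steps. More seriously, the proposed endgame---apply a Cauchy--Davenport or Kneser lower bound to $|A+A+A|$ inside $\mathbb Z/m\mathbb Z$ and deduce that it meets $-A\cup\{0\}$ once $\varphi(m)$ is large---does not go through for composite $m$. Cauchy--Davenport is a theorem about $\mathbb Z/p\mathbb Z$; for general $m$ the relevant statement is Kneser's theorem, whose bound $|A+A+A|\ge 3|A|-2|H|$ degenerates whenever the sumset has a nontrivial stabilizer $H$, and even in the nondegenerate case the resulting bound $3\varphi(m)/2-2$ falls short of $m$ as soon as $m$ has several distinct prime factors (e.g.\ $m=210$, $\varphi(m)=48$), so nothing forces $A+A+A$ into $-A\cup\{0\}$ and the claimed $N\le 4$ for large $\varphi(m)$ is unestablished.

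The argument the paper relies on (Speyer's, cited for the general case; the paper only writes out $m=2^k$) and the paper's own proof of the closely related Theorem~\ref{T:Speyer2} both circumvent this by inducting on the prime factorization of $m$ rather than estimating a single sumset in $\mathbb Z/m\mathbb Z$. Given a short vanishing sum $b_1+\cdots+b_N\equiv 0\pmod m$, one lifts to preimages $c_i$ with $\sum c_i\equiv 0\pmod{mp}$, forms $X_i=(\pi^{-1}(b_i)\cap A)-c_i\subset\mathbb Z/p\mathbb Z$, and applies Cauchy--Davenport inside the \emph{prime} cyclic group $\mathbb Z/p\mathbb Z$, where there are no degenerate stabilizers and the $\varphi/m$ issue never arises. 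That prime-by-prime lifting is the mechanism missing from your sketch. For what it is worth, the $m=2^k$ case the paper does reproduce uses a closure trick close in spirit to your ``iteration'' heuristic---if $\Phi^3\subset\Phi$ then the odd powers of any $a\in\Phi$, and hence all of $P(2^k)$, lie in $\Phi$, a contradiction---but this relies on odd powers of one element exhausting $P(2^k)$ and does not extend to general $m$.
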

\begin{proof}
The complete proof can be found in \cite{136310}. We will not reproduce it
because the proof of Theorem \ref{T:Speyer2} below follows very closely Speyer's arguments.
Here we will only deal with the case $m = 2^k$ for $k\ge 2$ since
it will be used in what follows.

For $m=2^2$ the result is clear. It suffices to take any element of $\Phi$ and raise it
to the $4$-th power.

Suppose now that $m=2^k$ with $k \ge 3$. Notice that the set $\Phi^3 = \Phi \cdot \Phi \cdot \Phi$ is contained in $P(m)$.
Aiming at a contradiction assume that  $\Phi^3 \subset \Phi$. If $a,b  \in \Phi$  then
 $a^2 b \in \Phi$. Hence for any $a \in \Phi$ we deduced that $a^{2l + 1}$ also belongs to $\Phi$. This
shows that $\Phi = P(m)$. Contradiction.

Since $\Phi^3$ is not contained in $\Phi$ we can find three elements in $\Phi$, say $a,b,c$ and
one element $d$ in $\Phi^{-1}$ such that $abc = d$. Since $d^{-1}\in \Phi$ we obtain that $abcd^{-1} =1 $
as wanted.
\end{proof}

\begin{cor}
Let $A$ be a quasi-abelian variety and $\alpha$ be a finite automorphism of $A$. Then
there exists a non-zero holomorphic  section of $\Sym^k \Omega^1_A$ invariant by $\alpha$ for some
$k \le 6$.
\end{cor}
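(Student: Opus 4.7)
The idea is to look for the invariant inside the space of translation-invariant symmetric differentials. Since $A$ is a commutative complex Lie group, the cotangent bundle $\Omega^1_A$ is globally trivialized by $\h0A$, so any element of $\Sym^k \h0A$ fixed by $\alpha^*$ automatically gives a non-zero holomorphic section of $\Sym^k\Omega^1_A$ invariant under $\alpha$. It therefore suffices to produce a non-zero $\alpha^*$-invariant vector in $\Sym^k \h0A$ for some $k\le 6$.

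First I would apply Theorem~\ref{T:autA} to the cyclic group $\langle\alpha\rangle$, obtaining $\alpha$-stable quasi-abelian subvarieties $A_1,\ldots,A_r$ whose sum map $A_1\times\cdots\times A_r\to A$ is an isogeny. Pulling back through this isogeny identifies $\h0A$ in an $\alpha^*$-equivariant way with $\bigoplus_{i=1}^r V_i$, where $V_i=\ih0A$; consequently each $\Sym^k V_i$ sits inside $\Sym^k\h0A$ as an $\alpha^*$-stable subspace, and it is enough to construct a non-zero $\alpha_i^*$-invariant element of $\Sym^k V_i$ for some $i$ and some $k\le 6$. If $d_1=1$ I would be done in degree $k=1$: by part (2) of Theorem~\ref{T:autA} the restriction $\alpha_1$ is the identity on $A_1$, so every non-zero element of $V_1$ is $\alpha^*$-invariant.

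Otherwise every $d_i\ge 2$, and parts (2)--(3) of Theorem~\ref{T:autA} ensure that $(A_i)_{\langle\alpha_i\rangle}$ is finite for all $i$, so Proposition~\ref{P:autA} applies to each factor. I would fix any index $i$ and set $d=d_i$, $\Phi=\Phi_{\alpha_i}$. When $d=2$ the proposition records that $-1\in\Phi$, and for any eigenvector $v\in V_i$ with $\alpha_i^*v=-v$ the product $v\cdot v\in\Sym^2 V_i$ is invariant. When $d\ge 3$, the $\varphi(d)/2$ elements of $\Phi$ produced by Proposition~\ref{P:autA} are pairwise non-conjugate primitive $d$-th roots of unity, so $\Phi$ and $\Phi^{-1}$ are disjoint and together exhaust the $\varphi(d)$ elements of $P(d)$, yielding the partition $P(d)=\Phi\sqcup\Phi^{-1}$. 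This is exactly the hypothesis of Speyer's Theorem~\ref{T:Speyer}, which provides $N\le 6$ and eigenvalues $\lambda_1,\ldots,\lambda_N\in\Phi$ with $\lambda_1\cdots\lambda_N=1$. Picking eigenvectors $v_j\in V_i$ with $\alpha_i^*v_j=\lambda_jv_j$, the symmetric product $v_1\cdots v_N\in\Sym^N V_i$ is non-zero and $\alpha_i^*$-invariant, giving the desired section.

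The only step that demands care is the combinatorial check that converts the lower bound of Proposition~\ref{P:autA} into the exact disjoint-union hypothesis required by Theorem~\ref{T:Speyer}; the rest is bookkeeping along the isogeny decomposition of Theorem~\ref{T:autA} and a trivial dispatch of the small cases $d_1=1$ and $d=2$.
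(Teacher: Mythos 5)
Your proof is correct and follows essentially the same route as the paper's: decompose via Theorem~\ref{T:autA}, pick a factor $A_i$ with $d_i\neq 1$, feed the eigenvalue set from Proposition~\ref{P:autA} into Theorem~\ref{T:Speyer}, and translate the multiplicative relation into an invariant monomial of eigenvectors. The one place you go beyond the paper is worth noting: you dispatch $d_i=2$ separately by squaring an eigenvector, which is needed because for $m=2$ the partition hypothesis $P(m)=\Phi\sqcup\Phi^{-1}$ of Theorem~\ref{T:Speyer} cannot be met (so that theorem does not literally apply, and $\varphi(2)/2=1/2$ makes Proposition~\ref{P:autA} degenerate there); the paper's proof silently glosses over this corner case.
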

\begin{proof}
If the only eigenvalue of $\alpha^* : \h0A \rightarrow \h0A$ is $d_1=1$ then, by  Theorem \ref{T:autA}, $A_1=A$, $\alpha$ is the identity, and any $1$-form is invariant.
Otherwise, some eigenvalue $d_i$ of $\alpha^*$ is not $1$, Let $A_i \subset A$ be the quasi-abelian subvariety  associated to $d_i$ in Theorem \ref{T:autA}.
By Theorem \ref{T:autA} (3), ${(A_i)}_{<\alpha_i>}$ is finite and, by Proposition \ref{P:autA},  $\alpha_i^*$ has $\varphi(n)/2$ distinct eigenvalues.

Let $\lambda$ be an eigenvalue associated to the eigenvector $\gamma$ of $\alpha_i^* : \ih0A \rightarrow \ih0A$.
Then $\gamma$ extends to an element  $\hat{\gamma} \in \h0A$ such that $\alpha^*\hat{\gamma}=\lambda \hat{\gamma}$. In particular, $\alpha^*$ also possesses $\varphi(n)/2$ distinct eigenvalues and application of Theorem \ref{T:Speyer} produces an $\alpha^*$-invariant symmetric differential of the required degree.
\end{proof}

\begin{thm}\label{T:Speyer2}
Let $m \in \mathbb N$ be a natural number satisfying $\varphi(m) \ge 4$,   $\Phi$ be a subset  of the set $P(m)\subset \mathbb C^*$ of primitive $m$-th roots of the unity,
and $\lambda$ be an element of $P(m)$.
If $P(m) = \Phi \cup \Phi^{-1} \cup  \{ \lambda, \lambda^{-1} \}$ then
\[
 1 \in \Phi^N = \underbrace{\Phi \cdot   \cdots \cdot \Phi}_{N \, \mathrm{ times }} \,
\]
for some $N\le 12$.
\end{thm}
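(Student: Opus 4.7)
The plan is to reduce to Theorem~\ref{T:Speyer} by augmenting $\Phi$ with the ``missing'' element $\lambda$, and then to eliminate $\lambda$ from the resulting relation.

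First, observe that if $\lambda \in \Phi \cup \Phi^{-1}$ (equivalently, $\lambda^{-1} \in \Phi \cup \Phi^{-1}$), then $\Phi \cup \Phi^{-1} = P(m)$ and Theorem~\ref{T:Speyer} directly provides a relation of length $\le 6 \le 12$. So the nontrivial case is $\lambda, \lambda^{-1} \notin \Phi \cup \Phi^{-1}$, in which case both $\tilde \Phi_{+} := \Phi \cup \{\lambda\}$ and $\tilde \Phi_{-} := \Phi \cup \{\lambda^{-1}\}$ satisfy the disjointness hypothesis of Theorem~\ref{T:Speyer}.

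Applying Theorem~\ref{T:Speyer} to $\tilde\Phi_{+}$ and $\tilde\Phi_{-}$ yields two relations
\[
 a_1 \cdots a_s \cdot \lambda^k = 1, \qquad b_1 \cdots b_t \cdot \lambda^{-j} = 1,
\]
with $a_i, b_l \in \Phi$ and $s + k, t + j \le 6$. If $k=0$ or $j=0$ a pure $\Phi$-relation of length $\le 6$ is already at hand. Otherwise, with $k, j \ge 1$, the plan is to eliminate $\lambda$ by raising the two relations to the powers $j/\gcd(k,j)$ and $k/\gcd(k,j)$ respectively, producing
\[
 (a_1 \cdots a_s)^{j/\gcd(k,j)}\, (b_1 \cdots b_t)^{k/\gcd(k,j)} = 1
\]
as an element of $\Phi^N$ with $N=(sj+tk)/\gcd(k,j)$. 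When $k=j$ this bound simplifies to $N = s+t \le 10$; more generally, whenever $\gcd(k,j)$ is comparable to $\max(k,j)$ the inequality $N\le 12$ holds immediately.

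The hard part will be the intermediate values of $(k,j)$, such as $(k,j)=(2,3)$ or $(1,2)$, where the naive bound $(sj+tk)/\gcd(k,j)$ can reach $18$. For these the plan is to revisit Speyer's proof of Theorem~\ref{T:Speyer} --- which proceeds by case analysis on the arithmetic of $m$ (prime powers $m=p^e$ first, then composites) --- and to exhibit, in each offending configuration, either a shorter Speyer relation in $\Phi$ itself, or alternative admissible pairs $(s,k)$ and $(t,j)$ whose $\lambda$-counts are compatible enough to keep the combined length $\le 12$. The hypothesis $\varphi(m)\ge 4$ is exactly what allows such flexibility to exist; and the example $m=12$, $\Phi=\{\zeta_{12}\}$, $\lambda=\zeta_{12}^5$, where the only pure-$\Phi$ relation is $\zeta_{12}^{12}=1$ of length exactly $12$, shows the bound is sharp and justifies that this case-by-case refinement of Speyer's argument is both necessary and sufficient.
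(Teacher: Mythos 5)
Your approach is genuinely different from the paper's, but it has a real gap. The paper proves Theorem~\ref{T:Speyer2} by an inductive argument on the prime factorization of $m$: passing to additive notation in $U(m) \subset \mathbb{Z}/m\mathbb{Z}$, checking the trivial base case $\varphi(m)=4$ (where $m\le 12$ already forces $\xi^m=1$), handling $m=2^k$ separately, and then showing via the quotient $\pi: C(mp)\to C(m)$ and the Cauchy--Davenport theorem that a zero-sum relation in a suitably constructed subset of $U(m)$ lifts to one in $\Phi$. In contrast, you apply Theorem~\ref{T:Speyer} to the augmented sets $\Phi\cup\{\lambda\}$ and $\Phi\cup\{\lambda^{-1}\}$, obtain two relations $a_1\cdots a_s\lambda^k=1$ and $b_1\cdots b_t\lambda^{-j}=1$ of length $\le 6$, and attempt to eliminate $\lambda$ by cross-multiplying.

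The elimination step is sound algebra, but the resulting bound $N = (sj+tk)/\gcd(k,j)$ does not stay below $12$: with $s+k\le 6$ and $t+j\le 6$, configurations such as $(k,j)=(1,2)$ allow $N$ up to $14$, and $(k,j)=(2,3)$ or $(1,3)$ allow $N$ up to $18$. You acknowledge this and say the ``plan'' is to revisit Speyer's proof and, in each offending configuration, find a shorter relation or an alternative admissible pair $(s,k),(t,j)$. That sentence is where the proof stops being a proof: you have not shown that such alternatives exist, nor given a mechanism to produce them. Theorem~\ref{T:Speyer} only guarantees the \emph{existence} of some relation of length $\le 6$; it does not let you choose the $\lambda$-content $k$ or $j$, and controlling those counts over all $(m,\Phi,\lambda)$ is essentially a new arithmetic case analysis of the same difficulty as proving the theorem directly. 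In other words, the reduction to Theorem~\ref{T:Speyer} buys you less than it seems to, and the residual work you defer is the actual content of the result. The observation that $m=12$, $\Phi=\{\zeta_{12}\}$, $\lambda=\zeta_{12}^5$ makes $N=12$ sharp is correct and worth keeping, but sharpness of the conclusion does not, by itself, certify that the deferred case analysis closes.

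To salvage this route you would need, at minimum, to enumerate for each $m$ with $\varphi(m)\ge 4$ the possible $(s,k)$ arising from Speyer relations in $\Phi\cup\{\lambda\}$ and show some compatible pair keeps $(sj+tk)/\gcd(k,j)\le 12$; that is precisely the kind of prime-by-prime bookkeeping the paper performs directly with Cauchy--Davenport, so you would be recreating the paper's argument under another name. As written, the proposal has a genuine gap at the step marked ``the hard part.''
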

\begin{proof}
If $\varphi(m)=4$ then $m \in \{ 5,8,10,12\}$ and the result is a trivial consequence of  $\xi^m=1$.

It will be convenient to adopt the additive notation.
Let $C(m)=\mathbb Z/m\mathbb Z$ denote the cyclic group with $m$ elements, and $U(m)$ be the set of its generators, i.e. $U(m) = (\mathbb Z/m\mathbb Z)^*$.

For $p$ a prime number we have the exact sequence
\[
0 \to C(p) \to C(mp) \xrightarrow{\pi} C(m) \to 0 \, .
\]
If $y \in U(m)$ then the cardinality of  $\pi^{-1}(y)\cap U(mp)$ is $p$ if $p$ divides $m$, and $p-1$ otherwise.

Let $A \subset U(mp)$ be a set and $\lambda \in U(mp)$ an element such that
\[
U(mp) = A \coprod - A \coprod \{ \lambda \} \coprod \{ - \lambda \} \, .
\]
Notice that, for every $y \in U(m) - \{ \pi(\lambda) , \pi(-\lambda) \}$, we have  the identity
\[
\#  ( \pi^{-1}(y) \cap A ) + \# (\pi^{-1} (y) \cap -A) = p \text{ or } p-1 \, ,
\]
according to whether $p$ divides $m$  or not.

Suppose that the result holds true for $m$ where $m$ is an integer satisfying $\varphi(m) \ge 4$. We will
now show that the result also holds for $mp$ where $p$ is any odd prime.

Let $\mu=\pi(\lambda)$ and define $ B \subset U(m)$ such that for every $y \in B$ the cardinality of $\pi^{-1}(y) \cap A$
is at least $(p-1)/2$ and
\[
U(m) = B \coprod - B \coprod \{ \mu \} \coprod \{ - \mu \} \, .
\]

Suppose there exists $b_1, \ldots, b_N \in B$ such that $b_1 + \ldots + b_N = 0$. Choose $c_1, \ldots, c_N \in C(mp)$ such that
$c_i \in \pi^{-1}(b_i)$ and $c_1 + \ldots + c_N=0$. Set $X_i = ( \pi^{-1}(b_i) \cap A) - c_i$ and notice that by construction
$X_i \subset C(p)$. By Cauchy-Davenport Theorem (see for instance \cite[Lemma 2.14]{MR628618}) we have that
\[
\# ( X_1 + \ldots + X_N ) \ge \min \{ p, N(p-1)/2 - (N-1) \} \, .
\]
Therefore if $p \ge 3 + 4/(N-2)$ ( i.e. $N \ge 3$ and $p\ge 7$; or $N \ge 4$ and $p\ge 5$) then
\[
\# ( X_1 + \ldots + X_N ) =p \, .
\]
Under this condition,  we can take $x_i \in X_i$ summing up to zero. Since $x_i + c_i \in \pi^{-1}(b_i) \cap A$ and
$\sum (x_i + c_i) = 0$, we have proved the claim for $p\ge 5$.

If $p=3$ then either $\# ( \pi^{-1}(y) \cap A) \ge 2$ for every $y \in B$, or there exists $y\in B$ such that $\pi^{-1}(y) \cap A$
and $\pi^{-1}(-y) \cap A$ are both non-empty. In the first case for any $N \ge 2$, $\# (X_1 + \ldots + X_N)=3$ and therefore the $N$ that works for $B$ also works for $A$.
In the second case, for any $a_1 \in \pi^{-1}(y)\cap A$ and $a_2 \in \pi^{-1}(-y) \cap A$ we have that $a_1 + a_2$ belongs to $C(3)$. This shows that $N=6$ works
in the second case.

An adaptation of this argument  shows that $N=8$ works for numbers of the form $m=2^{k}$ for $k\ge 3$.
We already now that this holds true for $m = 2^3$.  Assuming that it works for $2^k$, consider the quotient map  $\pi:C(2^{k+1}) \to C(2^k)$.
Notice that $U(2^{k+1})$ is mapped to $U(2^k)$ and that $\pi(\lambda) \neq \pi(-\lambda)$.
Therefore any decomposition $U(2^{k+1}) = A \coprod -A \coprod \{ \lambda \} \coprod \{ -\lambda\}$
allows us to produce a decomposition of $U(2^k) = B \coprod -B$ such that the fibers
of $\pi$ over points of  $B$ intersect $A$. Theorem \ref{T:Speyer} tells us that we can choose points $b_i \in B$   such that
$b_1 + \ldots + b_4 =0$. As above choose $c_i \in \pi^{-1}(b_i)$ satisfying $c_1 + \ldots + c_4=0$. We consider $X_i = ( \pi^{-1}(b_i) \cap A) - c_i$ as before.
Of course,  $X_i \subset \ker(\pi)$ and consequently  $X_1 + \ldots  + X_4$ is formed by $2$-torsion points. Therefore $0  \in 2 (X_1 + \ldots + X_4)$ and
we can produce $8$ elements of $A$ summing up to zero.

It remains to check that the result holds for numbers of the form $mp$ with $\varphi(mp)>  4$, $\varphi(m)\le 2$ and $p\ge 3$.

For numbers of the form $mp$ with $\varphi(m)=2$ and $\varphi(pm) > 4$ we proceed as follows. Notice that $\varphi(m)=2$ implies $m \in \{ 3, 4, 6\}$.
If $p = 3$ then $\varphi(m)=2$  and $\varphi(pm) > 4$ implies $m \in \{ 3, 6 \}$ and $pm \in \{ 9,18 \}$. A direct study of these
two particular cases shows that $N=6$ works for them. So we can
further assume that $p \ge 5$.
Let $\pi: C(pm) \to C(m)$. The image $\pi(A)$ has cardinality $1$ or $2$. Therefore for some $y \in \pi(A) \subset C(m)$ we have that the cardinality of $\pi^{-1}(y) \cap A$ is
at least $(\varphi(pm) - 2 )/4  = \frac{p-2}{2}$. Since $p$ is odd and the cardinality is an integer,
we get that the cardinality of  $\pi^{-1}(y) \cap A$  is actually bounded from below by $\frac{p-1}{2}$.
Since $m y =0$ we can choose  $c_1, \ldots, c_m  \in \pi^{-1}(y)$ such that $c_1 + \ldots +c_m =0$. Consider $X_i = A \cap \pi^{-1}(y) -c_i \subset C(p)$.
Cauchy-Davenport implies that $0 \in X_1 + \ldots + X_m$ if $m \ge 4$, or $0 \in 2 X_1 + 2X_2  +2X_3$ if $m=3$.
As before, we conclude  that $N \in \{ 4, 6\}$ works in this case.

Finally, assume that $\varphi(m)=1$, i.e. $m=2$. In this case $\varphi(mp)>  4$ implies that $p\ge 7$.
Consider the map $\pi: C(2p)\to C(2)$.  Clearly, $\pi(A) = 1$ in this case. Let $c_1, \ldots, c_6 \in \pi^{-1}(1)$ be elements summing up to zero and set $X_i= A -c_i$. The sets $X_i$ have
each cardinality $(p-3)/2$, and Cauchy-Davenport tells us that  $0 \in X_1 + \ldots + X_6$. Thus  $N=6$ works in this case.
\end{proof}

\begin{cor}\label{C:Speyer}
Let $\F$ be a reduced foliation defined by a closed rational $1$-form $\omega$ on a projective surface $X$.
Assume that $\mathcal F$ does not admit a rational first integral. Let $D=(\omega)_{\infty}$ be the polar divisor of $\omega$.
If $\varphi: X \to X$ is an automorphism of $\mathcal F$ of finite order then, for some natural number $k\le 12$,   there exists
an element of $\beta \in H^0(X, \Sym^k \Omega^1_X(\log D))$ such that $\varphi^* \beta = \beta$ and
the contraction of $\beta$ with a twisted vector field
$v \in H^0(X,TX \otimes \KF)$ defining $\mathcal F$ gives rise to a section of $H^0(X,\KF^{\otimes k})$ vanishing
  along any irreducible $\F$-invariant invariant compact  curve contained in $X-D$.
\end{cor}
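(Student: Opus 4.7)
The plan is to linearize the action of $\varphi$ on foliation-related data, transfer the problem to the finite-order linear action on the cotangent space at the origin of a quasi-abelian variety, and then invoke Theorem \ref{T:Speyer2} to produce an invariant symmetric logarithmic differential of controlled degree. The starting point is that, since $\varphi$ preserves $\F$, the $1$-form $\varphi^*\omega$ also defines $\F$, so $\varphi^*\omega = f\omega$ for some rational function $f$; both forms being closed yields $df \wedge \omega = 0$, so $f$ is a rational first integral of $\F$ and therefore a constant $c$, which is a root of unity by the finite order of $\varphi$. As $\varphi$ also preserves $D$ and the maximal compact $\F$-invariant curve $C \subset X - D$, it acts on the finite-dimensional space $V_C \subset H^0(X, \Omega^1_X(\log D))$ of logarithmic $1$-forms vanishing on $C$. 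Corollary \ref{C:chave} identifies $V_C$ with $\h0A$ via $r^*$, where $r: X - D \to A$ is the morphism of Proposition \ref{P:chave}, and shows that $\omega \in V_C$ implies $\dim V_C \ge 2$. The universal property of the Iitaka--Albanese morphism endows $A$ with a finite-order group automorphism (after an appropriate translation) whose action on $\h0A$ coincides with $\varphi^*|_{V_C}$.

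The construction of $\beta$ seeks eigenvectors $\eta_1, \ldots, \eta_k \in V_C$ of $\varphi^*$, none proportional to $\omega$, whose eigenvalues multiply to $1$. Then $\beta = \eta_1 \odot \cdots \odot \eta_k \in \Sym^k V_C \subset H^0(X, \Sym^k \Omega^1_X(\log D))$ is automatically $\varphi^*$-invariant, and its $k$-fold contraction with $v$ equals $\prod_j \iota_v(\eta_j) \in H^0(X, \KF^{\otimes k})$. Each factor $\iota_v(\eta_j)$ is nonzero, since the equality $\iota_v(\eta_j)=0$ would force $\eta_j \wedge \omega = 0$ and hence $\eta_j \in \C\omega$; the contraction is holomorphic because $v$ is tangent to the $\F$-invariant divisor $D$ and therefore absorbs the logarithmic poles. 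Each $\eta_j$ vanishes on $C$ by construction, so the contraction does too; by the maximality of $C$, every irreducible $\F$-invariant compact curve in $X - D$ is a component of $C$, which gives the required vanishing.

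To secure the bound $k \le 12$, I apply Theorem \ref{T:autA} to decompose $V_C = \bigoplus V_i$ according to the orders $d_1 < d_2 < \cdots < d_r$ of the eigenvalues of $\varphi^*|_{V_C}$; Proposition \ref{P:autA} guarantees that for each $d_i \ge 2$ the eigenvalues of $\varphi^*|_{V_i}$ form a set $\Phi_i$ with $P(d_i) = \Phi_i \sqcup \Phi_i^{-1}$. Let $i_0$ be the unique index with $\omega \in V_{i_0}$ and let $c$ denote its eigenvalue. If the $1$-eigenspace of $\varphi^*|_{V_C}$ contains a vector not collinear with $\omega$, take $k = 1$. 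Otherwise, if some $V_j$ with $j \ne i_0$ is nonzero, Theorem \ref{T:Speyer} applied to $\Phi_j$ (or the direct squaring of a $(-1)$-eigenvector when $d_j = 2$) produces $k \le 6$ eigenvectors in $V_j$, automatically disjoint from $\C\omega$. In the remaining case $V_C = V_{i_0}$, the inequality $\dim V_{i_0} \ge 2$ furnishes an eigenvector $\gamma \in V_{i_0} \setminus \C\omega$: for $d_{i_0} \le 6$ the choice $\beta = \gamma^{d_{i_0}}$ works with $k \le 6$, while for $d_{i_0} \ge 7$ (so $\varphi(d_{i_0}) \ge 4$) Theorem \ref{T:Speyer2} applied with $\Phi = \Phi_{i_0} \setminus \{c\}$ and $\lambda = c$ produces $\le 12$ eigenvectors in $\Phi$ whose eigenvalues multiply to $1$. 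The main obstacle is the bookkeeping needed to avoid the direction $\C\omega$ while simultaneously enforcing the product-equals-one condition on eigenvalues; Theorem \ref{T:Speyer2}, which allows discarding a single conjugate pair $\{c, c^{-1}\}$, is tailored to this situation, and the doubling of the bound from $6$ to $12$ is precisely the price of this exclusion.
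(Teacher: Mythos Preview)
Your argument is correct and follows essentially the same route as the paper: reduce to the linear action of $\varphi^*$ on $V_C\cong \h0A$, invoke Proposition~\ref{P:autA} to control the eigenvalue set, and then apply Theorem~\ref{T:Speyer2} (or Theorem~\ref{T:Speyer} when $\omega\notin V_C$) to manufacture a $\varphi$-invariant element of $\Sym^k V_C$ whose contraction with $v$ is nonzero and vanishes on $C$. Your write-up is in fact more explicit than the paper's on two points the authors leave implicit: the separate treatment of small orders $d_{i_0}\le 6$ (where Theorem~\ref{T:Speyer2} does not apply and one simply takes $\gamma^{d_{i_0}}$), and the reason the chosen eigenvectors can avoid the line $\C\omega$. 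One small cosmetic remark: your case analysis is phrased under the tacit assumption $\omega\in V_C$ (so that $i_0$ exists); when $\omega\notin V_C$ every eigenvector of $V_C$ already lies outside $\C\omega$ and Theorem~\ref{T:Speyer} applied to any $V_j$ finishes with $k\le 6$, exactly as the paper notes at the end of its proof.
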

\begin{proof}
Let $C \subset X- D$ be the maximal $\F$-invariant compact curve contained in $X-D$.

By Corollary \ref{C:chave}, the vector space $V_C$ of logarithmic $1$-forms with poles on $D$ and with trivial restriction to $C$  is the image of $r^* : \h0A \to H^0(X,\Omega^1_X(\log D))$, for a certain  non-constant rational map $r: X\dasharrow A$ to a quasi-abelian variety $A$. In particular $V_C$ is nontrivial.

Since $\F$ does not admit a rational first integral, the pull-back $\varphi^* \omega$ is a complex multiple of $\omega$. In particular, $\varphi^*$ preserves the divisor $D$
and   also $C$, the maximal $\F$-invariant compact curve contained  in $X-D$.
Hence $V_C$ is also invariant under $\varphi^*$.  Let $\varphi_*$ be the automorphism of the
quasi-abelian variety $\Alb(X,D)= H^0(X, \Omega^1_X(\log D))^*/H_1(X-D,\mathbb Z)$ induced by $\varphi^*$.
Since  $\varphi^*$ preserves $V_C$, it follows that the action of $\varphi_*$ on $\Alb(X,D)$ preserves the image of
$\Alb(C) = \oplus \Alb(C_i)$ in $\Alb(X,D)$. We obtain in this way  an induced action of $\varphi_*$ on the quasi-abelian variety $A = \mathrm{coker}( \Alb(C) \to \Alb(X,D))$. This action
of $\varphi_*$ has the same eigenvalues as the action of $\varphi^*$ on $V_C$.
Therefore the action of $\varphi^*$ on  $V_C$   has eigenvalues described by  Proposition \ref{P:autA}. We can apply Theorem \ref{T:Speyer2}
in order to produce a symmetric differential  $\beta \in \Sym^k V_C$ for some $k \le 12$ which is invariant under $\varphi$, vanishes when restricted to $C$ and does not vanish identically along the leaves of $\F$. Thus the restriction of  $\beta$ to the leaves of $\mathcal F$ (i.e. contraction of $\beta$ with a twisted vector field defining $\F$) gives to a holomorphic section of $\KF^{\otimes k}$ with the sought properties.

In the case
$\omega \notin V_C$, we can apply Theorem \ref{T:Speyer}  instead of Theorem \ref{T:Speyer2} in order to get a section of $\KF^{\otimes k}$ for some $k\le 6$  with the required properties.
\end{proof}

\subsection{Synthesis}\label{S:Synthesis} The next result builds on the discussion carried out in this section. It goes a long way toward the
proof of Theorem \ref{THM:A}.

\begin{thm}\label{T:transEuclidean}
Let $\F$ be a reduced transversely affine foliation on a projective surface $X$ with transverse structure defined by a logarithmic  connection $\nabla$ on $N\mathcal F$ with finite monodromy. Assume that $\F$ does not admit a rational first integral then for some $k\le 12$ there exists a non-zero section $s$ of $\KF^{\otimes k}$.
Moreover,  the section $s$  vanishes on
all irreducible components of $(\nabla)_{\infty}$ with residue strictly greater than~$-1$.
\end{thm}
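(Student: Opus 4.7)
The plan is to bootstrap Corollary \ref{C:Speyer} from the setting of closed rational $1$-forms to the setting of logarithmic connections with finite monodromy by passing to the cyclic Galois covering that trivializes the monodromy. Concretely, the finite cyclic group $\rho(\pi_1(X - (\nabla)_\infty)) \subset \mathbb C^*$, together with the construction reviewed in \S\ref{S:finitegalois}, produces after a suitable birational modification $r : \tilde X \to X$ a cyclic Galois cover $p : Y \to \tilde X$ of degree $m = \ord(\rho)$ such that $\mathcal G := (p \circ r)^* \F$ is defined by a closed rational $1$-form $\alpha$, with the generator $\varphi$ of the Galois group satisfying $\varphi^* \alpha = \xi \alpha$ for a primitive $m$-th root of unity $\xi$. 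The simply-connectedness hypothesis in Proposition \ref{P:covering} only serves to pin down $m$ in terms of the residues of $\nabla$; the construction itself goes through once one fixes $m=\ord\rho$. After further equivariant blowing up, we may assume $\mathcal G$ is reduced in Seidenberg's sense.

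The next step is to verify that $\mathcal G$ admits no rational first integral. If $f \in \mathbb C(Y)$ were one, the elementary symmetric functions of $\{\varphi^{*i} f\}_{i=0}^{m-1}$ would be $\varphi$-invariant, hence would descend to rational functions on $\tilde X$ constant along the leaves of $r^*\F$. Since they cannot all be constant (otherwise each $\varphi^{*i}f$ is constant), we would obtain a rational first integral for $r^*\F$ and hence for $\F$, contradicting the hypothesis.

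With this in hand, apply Corollary \ref{C:Speyer} on $Y$ to the pair $(\mathcal G, \varphi)$: for some $k \le 12$ we obtain a $\varphi$-invariant element $\beta \in H^0(Y, \Sym^k \Omega^1_Y(\log (\alpha)_\infty))$ whose contraction with a twisted vector field defining $\mathcal G$ yields a $\varphi$-invariant section $\tilde s \in H^0(Y, K_{\mathcal G}^{\otimes k})$ that vanishes along every irreducible compact $\mathcal G$-invariant curve contained in $Y \setminus (\alpha)_\infty$. According to the residue analysis recalled in \S\ref{S:finitegalois}, the strict transforms in $Y$ of the irreducible components of $(\nabla)_\infty$ with $\Res_{C_0}(\nabla) > -1$ are precisely such curves, so $\tilde s$ vanishes along them.

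Finally, the $\varphi$-invariance of $\tilde s$ lets us descend it via $p$ to a section of $K_{r^*\F}^{\otimes k}$ on $\tilde X$: this uses the formula for $N_{\mathcal G}$ given in \S\ref{S:finitegalois}, which identifies $(p_* K_{\mathcal G}^{\otimes k})^{\varphi}$ with $K_{r^*\F}^{\otimes k}$. Pushing this section forward along the birational morphism $r$ then produces the desired $s \in H^0(X, K_\F^{\otimes k})$, vanishing along all irreducible components of $(\nabla)_\infty$ with residue strictly greater than $-1$. I expect the main technical obstacle to be precisely this descent step: one must check that the Galois quotient of $\tilde s$ is regular across the branch divisor of $p$, and that its push-forward by $r$ extends regularly across the centers of the blow-ups performed to reduce $\mathcal G$. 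Both points follow from a local analysis at ramification points (where the ramification indices are controlled by the denominators of $\Res_{\mathbb Q}(\nabla)$) and at exceptional divisors, but writing this out carefully is where the bookkeeping is most delicate.
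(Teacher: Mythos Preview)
Your proposal is correct and follows essentially the same route as the paper. The paper sidesteps your anticipated descent bookkeeping by invoking directly that $K_{\mathcal G} = \pi^* K_{\mathcal F}$ for the cyclic cover $\pi$ (citing \cite[Chapter~2, Example~3.4]{MR3328860}), so a $\varphi$-invariant section of $K_{\mathcal G}^{\otimes k}$ descends to $H^0(X,K_{\mathcal F}^{\otimes k})$ automatically; it also omits your extra equivariant blow-up, since $\mathcal G$ is already reduced once the ramification locus of $\pi$ is a disjoint union of smooth $\mathcal F$-invariant curves.
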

\begin{proof}
Let $p : Y \to X$ be the Galois morphism obtained through a minimal resolution of the ramified covering defined by the kernel of the monodromy of $\nabla$ as explained in \S \ref{S:finitegalois}.
Let us denote by $\varphi \in \Aut(Y)$ the automorphism of  $p$ and
by $\G$ the pull-back foliation $p^* \F$.
By assumption $\G$ is defined by a closed rational $1$-form $\omega$ with polar divisor $P$.
Let $C \subset Y$ be the maximal compact curve invariant by $\G$ and with no irreducible component contained in the support of  $P$. Lemma \ref{L:zerosandpoles} implies that $C$, if not empty, is a compact curve contained in $Y-|P|$. Notice that the pull-back to $Y$ of the irreducible
components of $(\nabla)_{\infty}$ with
residue strictly greater than $-1$ are contained in $C$.

Let $v\in H^0(Y,TY\otimes \KG)$ be a twisted vector field defining $\G$.
Corollary \ref{C:Speyer} produces a logarithmic symmetric differential $\sigma \in H^0(Y, \Sym^k \Omega^1_Y(\log P))$ invariant under $\varphi$ whose contraction with $v$ vanishes along $C$. Let $\sigma_{\vert T\G}\in H^0(Y,\KG ^{\otimes k})$ denote this contraction.
The  $\varphi$ invariance of $\sigma$
implies the $\varphi$ invariance of $\sigma_{|T\G}$. According to Lemma \ref{L:fix}  $\KG = p^* \KF$, therefore $\sigma_{|T\G}$ is the pull-back under $\pi$ of a section $s$ of $H^0(X, \KF^{\otimes k})$. As $\sigma_{|T\G}$ vanishes along $C$ and $\pi_{|C}$ is generically a submersion to $\pi(C)$, the section $s$ must vanish along all the irreducible components of $(\nabla)_{\infty}$ with
residue strictly greater than $-1$.
\end{proof}

\section{Foliations of Kodaira dimension one}

In this section we will investigate foliations of Kodaira dimension one (not necessarily transversely affine). The study carried out here will be relevant to the proofs of the three main theorems.
\subsection{Classification} The classification of foliations of Kodaira dimension one
 is due to Mendes  \cite{MR1785264}. The particular case of foliations of Kodaira dimension
 one admitting a rational first integral goes back to the work of Serrano.

\begin{thm}\label{T:kod1}
Let $\F$ be a reduced foliation on a smooth projective surface $X$. If $\kod(\F)= 1$ and $f: X \to C$ is the
pluricanonical fibration of $\F$ then either
\begin{enumerate}
\item the fibration $f$ is a non-isotrivial elliptic fibration and $\F$ is the foliation defined by $f$ or,
\item the fibration $f$ has rational fibers and $\F$ is a Riccati foliation relative to  $f$ or,
\item the fibration $f$ is an isotrivial elliptic fibration and $\F$ is a turbulent foliation  relative to $f$ or,
\item the fibration $f$ is an isotrivial hyperbolic fibration and $\F$
is also an  isotrivial hyperbolic fibration but $\F$ does not coincide with the foliation defined by $f$.
\end{enumerate}
\end{thm}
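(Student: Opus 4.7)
The plan is to exploit the Zariski decomposition $K_\F = P + N$ of the canonical bundle of $\F$, where $P$ is nef and $N$ is effective with negative definite intersection matrix on its support. The hypothesis $\kod(\F) = 1$ forces $P^2 = 0$ and $P \not\equiv 0$; by McQuillan's results alluded to in the introduction, $P$ is numerically a positive rational multiple of $f^* H$ for $H$ an ample divisor on the base $C$ of the pluricanonical fibration. Consequently, for a general fiber $F$ of $f$, one has $F^2 = 0$, $P \cdot F = 0$, and $K_\F \cdot F = N \cdot F \geq 0$, the latter inequality because $F$ is nef and $N$ is effective.

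The argument then pivots on whether a general fiber $F$ is $\F$-invariant. If it is invariant, the pencil of leaves of $\F$ must coincide with the pencil of fibers of $f$, so $\F$ is the foliation defined by $f$. Adjunction yields $K_\F \cdot F = 2g(F) - 2$; combined with $K_\F \cdot F \geq 0$, we obtain $g(F) \geq 1$. The case $g(F) \geq 2$ is discarded because $K_\F$ would then already be big, forcing $\kod(\F) = 2$; so $g(F) = 1$ and $\F = f$ is an elliptic fibration. Analysing the Kodaira canonical bundle formula for elliptic fibrations then refines this to the non-isotrivial sub-case compatible with $\kod(\F)=1$, yielding case (1).

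If $F$ is instead transverse to $\F$, the tangency formula $\tang(\F, F) = K_\F \cdot F + F^2 = N \cdot F$ produces a finite nonnegative integer, and one splits by the genus of $F$. For $g(F) = 0$, $\F$ is transverse to a rational fibration and is therefore Riccati with respect to $f$, giving case (2). For $g(F) = 1$, $\F$ is turbulent; the existence of a holomorphic transverse foliation obstructs any non-constant variation of the $j$-invariant along $C$, forcing $f$ to be isotrivial and giving case (3). For $g(F) \geq 2$, the finiteness of $\Aut(F)$ makes the holonomy of $\F$ around each fiber finite, so, after a finite \'{e}tale cover, the leaves of $\F$ assemble into a second fibration $g : X \dashrightarrow C'$ with isomorphic hyperbolic fibers, necessarily isotrivial and distinct from $f$, producing case (4).

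The main obstacle is the last sub-case: upgrading ``finite holonomy'' to ``second fibration'' requires the rigidity of holomorphic maps into the moduli of curves of genus $\geq 2$, together with a Stein factorization argument to extract the fibration $g$ from the leaves of $\F$. One must then compare the Zariski decomposition of $K_\F$ associated with both $f$ and $g$ to confirm that both are isotrivial hyperbolic, a configuration realized by finite quotients of products $C_1 \times C_2$ of hyperbolic curves with diagonal group actions. The remaining verifications --- mutual exclusivity of the four cases and that each indeed yields Iitaka dimension one --- are routine once this dichotomy and the genus analysis are in place.
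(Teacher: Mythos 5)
The paper does not supply a proof of Theorem \ref{T:kod1}; it cites Mendes (with the first-integral sub-case attributed to Serrano), so there is no internal proof to compare against --- your sketch has to be assessed on its own.

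Your framework --- Zariski decomposition $\KF = P+N$, numerical triviality of $P$ on the fibers of $f$, adjunction/tangency for a general fiber $F$, case split by invariance of $F$ and by genus --- is the standard route. The genuine gap is in the invariant case: you discard $g(F) \geq 2$ on the grounds that $\KF$ ``would already be big,'' but that is false in general. For an isotrivial fibration of genus $\geq 2$ with $\F$ the fibration foliation, $\KF$ pulls back from the base, is not big, and $\kod(\F)=1$. The correct argument is sharper and does not appeal to bigness: if a general fiber $F$ of $f$ is $\F$-invariant, then $\F$ is the fibration foliation, so the only $\F$-invariant curves are vertical; since the support of $N$ is $\F$-invariant, $N$ is vertical and $N\cdot F = 0$, hence $\KF\cdot F = (P+N)\cdot F = 0$ and adjunction forces $g(F)=1$ exactly, killing $g\geq 2$ and $g=0$ at one stroke. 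This is also consistent with case (4) of the statement: for an isotrivial hyperbolic fibration $\F$, the pluricanonical fibration is the \emph{transverse} ruling, so $F$ is not $\F$-invariant and the analysis lands in your transverse branch, as it must.

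Two smaller points. In the transverse $g(F)=1$ branch, the claim that a transverse holomorphic foliation forces $j$ constant does hold (the leaves give a flat Ehresmann connection that trivializes the fibration over small disks, so $j$ is locally constant), but this should be stated rather than alluded to. In the transverse $g(F)\geq 2$ branch, the connection's holonomy lands in $\Aut(F)$, which is finite, and after the corresponding finite cover the bundle trivializes \emph{and} the lifted foliation becomes horizontal; that last conclusion is precisely what turns ``finite holonomy'' into ``second isotrivial fibration,'' and, as you flag, it is the one step that really requires care.
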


We will now obtain effective bounds on the least natural number $k$ such that $h^0(X , \KF^{\otimes k}) \ge 2$
analysing separately  each of the four cases predicted by Theorem~\ref{T:kod1}.

\subsection{Non-isotrivial elliptic fibrations}
The case of non-isotrivial elliptic fibrations is well-known. For instance it is implicitly treated in \cite{CasciniFloris}. Yet, for the
sake of completeness we state and prove the following result.

\begin{prop}
Let $\F$ be the reduced foliation subjacent to a non-isotrivial elliptic fibration $f:X \to C$ on
a projective surface $X$. Then $h^0(X, \KF^{\otimes 12})\ge 2$.
\end{prop}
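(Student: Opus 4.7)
The plan is to realize $\KF^{\otimes 12}$ as the pullback from $C$ of a line bundle whose two independent sections come from the $j$-invariant of the fibration.

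First I would compute $\KF$ explicitly in terms of the fibration data. From the exact sequence $0 \to T\F \to TX \to TX/T\F \to 0$, one gets $\KF = K_{X/C}$ wherever $f$ is smooth. Near a multiple fibre of multiplicity $m$, local coordinates in which $f = x^m$ show that $T\F = \mathcal{O} \cdot \partial_y$ is locally trivial, so $\KF$ receives no contribution from the multiple fibres, and globally
\[
\KF = K_{X/C} - \sum_i (m_i - 1) F_{i,\mathrm{red}}.
\]
Combining this with Kodaira's canonical bundle formula $K_{X/C} = f^* L + \sum_i (m_i - 1) F_{i,\mathrm{red}}$, where $L \in \Pic(C)$ is the fundamental line bundle of the fibration, the contributions from multiple fibres cancel and one finds $\KF = f^* L$.

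By the projection formula and the equality $f_* \mathcal{O}_X = \mathcal{O}_C$, this reduces the problem to the base curve: $h^0(X, \KF^{\otimes 12}) = h^0(C, L^{\otimes 12})$.

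Finally I would invoke the modular interpretation of $L^{\otimes 12}$. The $j$-invariant of $f$ defines a morphism $J : C \to \mathbb{P}^1 = \overline{\mathcal{M}}_{1,1}$. Classically, modular forms of weight $12$ produce distinguished sections $E_4^3, \Delta \in H^0(C, L^{\otimes 12})$ whose ratio equals $J$; equivalently, there is an injection $J^* \mathcal{O}_{\mathbb{P}^1}(1) \hookrightarrow L^{\otimes 12}$ whose cokernel is an effective divisor supported on the discriminant locus of $f$. Since $f$ is non-isotrivial, $J$ is non-constant, so the two coordinate sections of $\mathcal{O}_{\mathbb{P}^1}(1)$ pull back to linearly independent sections of $J^* \mathcal{O}_{\mathbb{P}^1}(1)$, and hence yield two linearly independent sections of $L^{\otimes 12}$. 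This gives $h^0(X, \KF^{\otimes 12}) = h^0(C, L^{\otimes 12}) \ge 2$, as required.

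The only delicate step is the identification $\KF = f^* L$, which demands careful bookkeeping of the multiple-fibre corrections in Kodaira's formula; everything else is a formal consequence of Kodaira's formula and the standard interpretation of the $j$-invariant via weight-$12$ modular forms.
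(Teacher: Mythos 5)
The identification $\KF = f^* L$ at the heart of your argument is not correct in general, and this is where the proof breaks down. One does have $\KF = K_{X/C} - R$, but here $R$ must be the \emph{full} ramification divisor of $f$, i.e.\ the codimension-one zero locus of $f^*\eta$ for a generic $1$-form $\eta$ on $C$. This $R$ contains not only the multiple-fibre corrections $\sum (m_i-1)F_{i,\mathrm{red}}$ but also every multiplicity-$>1$ component of the non-multiple Kodaira fibres of additive type ($I_b^*$, $II$, $III$, $IV$, $IV^*$, $III^*$, $II^*$). These extra terms do not appear in Kodaira's formula $K_{X/C} = f^*L + \sum (m_i-1)F_{i,\mathrm{red}}$, so instead of full cancellation you get $\KF = f^*L - R'$ for an effective, generally nonzero divisor $R'$. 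For a concrete counterexample, take a rational elliptic surface with one $I_0^*$ fibre and six $I_1$ fibres: then $\deg L = 1$ and $\KF = F - E_0$ (where $E_0$ is the multiplicity-$2$ central component of $I_0^*$), which is not $f^*L = F$; in fact $\KF\cdot E_i = -1$ for the legs $E_i$ of the $I_0^*$ fibre, so $\KF$ is not even nef. With $R'>0$ you only get an inclusion $H^0(X,\KF^{\otimes 12})\hookrightarrow H^0(C,L^{\otimes 12})$, which is the wrong direction for a lower bound.

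The paper's proof fixes exactly this by passing to the Zariski decomposition $\KF = P + N$: by \cite[Lemma 2.22]{CasciniFloris} the positive part is $P = f^*M_C$, where $M_C$ is the \emph{moduli part} of the canonical bundle formula (strictly smaller than the fundamental line bundle $L$ whenever there are additive fibres), and for elliptic fibrations $12M_C$ is Cartier and coincides with $J^*\mathcal O_{\mathbb P^1}(1)$. The Zariski decomposition then gives $H^0(X,\KF^{\otimes 12}) = H^0(X, f^*(12M_C)) = H^0(C, J^*\mathcal O_{\mathbb P^1}(1))$, and non-isotriviality makes $J$ non-constant, which yields $h^0 \ge 2$. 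Your instinct that the sections come from the $j$-invariant is exactly right, but the carrier is $M_C$ rather than $L$, and one must use the Zariski decomposition rather than a naive identification of $\KF$ as a pullback.
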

\begin{proof}
The positive part of the Zariski decomposition of $\KF$ coincides with $f^* M_C$, where $M_C$ is  the moduli
part in the canonical bundle formula $K_{X/C} = f^*(M_C + B_C)$ of Kodaira, see \cite[Lemma 2.22]{CasciniFloris}.
Moreover   $|12M_C|$ is Cartier and base point free. More precisely, $12M_C$ coincides
with $J^* \mathcal O_{\mathbb P^1}(1)$ where $J:C \to \mathbb P^1$ is the $j$-invariant of the fibration according
 to \cite[Theorem 2.9]{MR816221}. Therefore $H^0(X, \KF^{\otimes 12}) = H^0(C, M_C^{\otimes 12}))$ and the result follows.
 \end{proof}

\subsection{Riccati foliations}\label{S:Riccati}
Let $\F$ be a  reduced Riccati foliation
on a projective surface $X$.
If $\F$ is not a fibration by rational curves then $\KF$ is pseudo-effective, in particular it admits a Zariski decomposition.
The structure of the negative part of the Zariski decomposition of $\F$ was precisely describe by McQuillan in \cite{MR2435846}:
in particular its connected components are formed by trees of rational curves. Furthermore, if one assumes that the foliation
$\F$ is relatively minimal then the connected components of the negative part of the Zariski decomposition are Hirzebruch-Jung
chains of $\F$-invariant rational curves. The contraction of the negative part gives raise to a singular projective surface with at worst
cyclic quotient singularities. The resulting foliation is called a nef model of $\F$. In the remainder of this section we will
work with foliations produced by this process. For details the reader can consult the original paper \cite{MR2435846} or the survey
\cite{MR2071237}.

Assume $\F$ is a nef model for a reduced Riccati foliation of non-negative Kodaira dimension. Let $f:X \to C$ be the reference (equivalently, adapted) fibration.
The canonical bundle of $\F$ is described in \cite[IV.4]{MR2435846}. We present an equivalent  description below
following Brunella's survey \cite[Section 7]{MR2071237}.  We start by recalling the classification of the fibers
of the reference fibration $f$, assuming $\KF$ is  nef. They are divided in $5$ classes labeled $(a)$, $(b)$, $(c)$, $(d)$ and $(e)$.
\begin{enumerate}
\item[$(a)$] Smooth fibers of $f$ transverse to $\F$.
\item[$(b)$] Singular fibers of $f$ transverse to $\F$  with two cyclic quotient singularities of the same order $o$.
\item[$(c)$] Smooth fibers of $f$ invariant by $\F$ with two non-degenerate saddles  or one saddle-node
with multiplicity two.
\item[$(d)$] Smooth fibers of $f$ invariant by $\F$ with two saddle-nodes with the same multiplicity $m$.
\item[$(e)$] Singular fibers of $f$ invariant by $\F$ with one saddle-node of multiplicity $l$ and two
quotient singularities of order $2$.
\end{enumerate}

The quotient singularities over fibers of  $f$ of type $(b)$ and $(e)$
induce a natural orbifold structure $\Corb$  on $C$ where the points below $(b)$ have multiplicity $o$
and the points below $(e)$ have multiplicity $2$. Therefore
\begin{equation}
K_{\Corb} = K_C + \sum_{(b)} \frac{o_j-1}{o_j}b_j+\sum_{(e)} \frac{e_j}{2} \,
\end{equation}
where the points $b_j,e_j \in C$ are below the  fibers of type $(b)$ and $(e)$ respectively.
Similarly, the direct image of the  canonical bundle of $\KF$ can be expressed as
\begin{equation}\label{E:KF}
  f_* \KF = K_{\Corb} + \sum_{(c)} c_j+\sum_{(d)} m_j d_j+\sum_{(e)} \frac{l_j}{2}e_j
\end{equation}
where $c_j,d_j$ and $e_j$ run respectively among the points of $C$ below fibers of type $(c)$, $(d)$ and $(e)$.  For a precise description of the coefficients $m_j$, $l_j$, $o_j$ the reader can consult Brunella's paper.  For our purposes, it is sufficient to know that $K\F=f^*(f_*K\F)$,  $o_j>1$ is equal to the finite order of the local monodromy around the fiber over $b_j$ and $m_j$, $l_j$ are positive integers.

Before investigation of the pluricanonical sections, we emphasize properties of  Riccati foliations and their pull-backs.

\subsubsection{Transversely affine Riccati foliations} In the following lemmas we collect properties of transversely affine Riccati foliations which will be
used in the remainder of this section and, more intensively, in Section \ref{S:algorithmic}. We recommend the reader to skip
this section in a first reading, and return to it when necessary.

\begin{lemma}\label{L:propR}
Let $\F$ be a transversely affine foliation on a projective surface $X$ which is not virtually transversely
Euclidean. The following assertions hold true.
\begin{enumerate}
\item \label{L:propR:1}
There exists a unique flat meromorphic connection $\nabla$ on $N\F$ which defines a transverse affine structure for $\mathcal F$.
\item  \label{L:propR:2}If the connection $\nabla$  is  logarithmic then its monodromy is an infinite subgroup of $\mathbb C^*$. In particular, if $X$ is simply connected then  the residues of $\nabla$ are not all rational.
\item  \label{L:propR:3} There exists a Ricatti foliation $\mathcal R$ on a ruled surface $f:S \to B$, and a rational map $p:X\dasharrow S$ such that
$\F$ is equal to $p^* \mathcal R$. The foliation $\mathcal R$ can be chosen to be reduced or, even more, can be chosen to be a nef model of a reduced foliation.
\item  \label{L:propR:4} The Riccati foliation $\mathcal R$ is transversely affine and not virtually transversely Euclidean.
 \item  \label{L:propR:5} The Riccati foliation $\mathcal R$ leaves invariant a unique algebraic section $\sigma$ of $f$. No other invariant algebraic curve dominates the basis of $f$. The connection on $N\mathcal R$ defining the affine structure for $\mathcal R$ has a simple pole along $\sigma$ with residue~$-2$.

 \item  \label{L:propR:6} If  $\calR$ is taken reduced and nef  then:
\begin{enumerate}[label=(\roman*)]
 \item  \label{L:propR:6:i} there are no invariant fibers of type $(e)$ (nilpotent fibers in the  terminology of \cite[Proposition~$4.2$]{MR3328860}).
 \item  \label{L:propR:6:ii} The components of $(\nabla)_{\infty}$ that map into fibers of type $(b)$ or $(c)$ of $f$ (nondegenerate fibers in \cite[Proposition~$4.2$]{MR3328860}) are simple poles of~$\nabla$.
\item  \label{L:propR:6:iii} If a fiber of $f\circ p$ contains a multiple pole of $\nabla$, then it is $\F$-invariant and contains a singularity possessing a  non algebraic (perhaps formal) separatrix for $\F$.  In particular this separatrix bears no pole of $\nabla$.

 \item  \label{L:propR:6:iv} The adapted ruling $f$ is the Iitaka fibration of $K\calR$, hence $\kod \calR=1$.
\end{enumerate}

\end{enumerate}
\end{lemma}

\begin{proof}
(\ref{L:propR:1}) The difference between two distinct flat connections on the same line-bundle is a non-trivial closed rational $1$-form $\omega$. If the two connections define transversely affine structures for the same foliation, then $\omega$ must vanish along the foliation. It follows that the foliation is transversely Euclidean, contrary to our assumption. (\ref{L:propR:2}) This follows from Remark~$\ref{rem:eucl}$. (\ref{L:propR:3}) This follows from Theorem~$\ref{T:structure}$ and existence of reduced nef models.
(\ref{L:propR:4}) It follows from \cite[Theorem 2.21]{MR2324555} that  $\calR$ is transversely affine. If $\calR$ would be virtually Euclidean, it would be the same for $\F$, by a fiber product argument, or Remark \ref{rem:eucl}.

(\ref{L:propR:5}) Let $\nabla_{\calR}$ be a connection on $N \mathcal R$ defining the transverse affine structure for $\calR$. If $F$ is a general fiber of the ruling $f$ then $N\mathcal R \cdot F = 2$ according to \cite[Chapter 4, Section 1]{MR3328860}. The residue formula (Proposition \ref{P:residues}) implies that $\supp(\nabla)_{\infty}$ intersects $F$. If an $\calR$-invariant algebraic curve intersects $F$ in more than one point then, after base change through a finite ramified covering of $B$ and birational trivialization of $f$, we can assume that $S=B\times \Pu$, $f$ is the first projection and, for $y$ a suitable  affine chart of $\Pu$,  the sections $y=0$ and $y=\infty$ are $\calR$-invariant. In particular $\calR$ is defined by a $1$-form
   $dy +  y \alpha $, where $\alpha$ is a rational $1$-form  on $B$.
It follows that $\mathcal R$ (after a ramified covering) is defined by the closed rational $1$-form $dy/y + \alpha$  and  is therefore virtually transversely Euclidean, contradicting (\ref{L:propR:4}). Thus,  there is a unique invariant algebraic curve  that  dominates the basis of the ruling and it is a genuine section $\sigma$ contained in the support of $(\nabla)_{\infty}$. Proposition~\ref{P:residues} implies that the residue of $\nabla$ along $\sigma$ equals  $-2$. Observing that $\mathcal R$ is defined in suitable affine coordinates by  a $1$-form
 \begin{equation}\label{Riccaff}dy + \beta + y \alpha, \end{equation}
 with $\alpha, \beta$ rational $1$-forms on $B$. One checks the local connection form for $\nabla_{\calR}$ is $f^*\alpha$ and  verifies the connection is logarithmic at the section $\sigma =\{ y=\infty \}$.

$(\ref{L:propR:6})-\ref{L:propR:6:i}$ It suffices to observe that  through the saddle-node of any fiber of type $(e)$, the unique (non-fiber) separatrix is a two-to-one covering of the basis. The existence part of $(\ref{L:propR:5})$ shows this separatrix must be $\calR$-invariant, contradicting the uniformity part of  statement of $(\ref{L:propR:5})$.

$(\ref{L:propR:6})-\ref{L:propR:6:ii}$ Let $f^{-1}(x)$ be a  fiber of type $(b)$ or $(c)$ and consider a polar component $C$ of $(\nabla)_{\infty}$ that maps in $f^{-1}(x)$ through $p$, \textit{i.e.} $f\circ p(C)=\{x\}$. After a birational $\Pu$-bundle transformation of $f:S\to B$, $f^{-1}(x)$ becomes a nondegenerate fiber as in \cite[Proposition~$4.2$]{MR3328860}). In particular,  one has an expression $(\ref{Riccaff})$ for $\calR$, with $\beta$ logarithmic at $x$.
At the neighborhood in $X$ of a smooth general point $z$ of $C$,  the map $p$ is holomorphic, and $\eta=p^*f^*\beta$ is logarithmic around $z$. Up to a birational line bundle transformation this latter form is the connection form for $\nabla$ around $z$. In particular, $\nabla$ has at most a simple pole at $C$.

$(\ref{L:propR:6})-\ref{L:propR:6:iii}$ If a fiber  $F$ of $f\circ p$ is not $\F$-invariant, the corresponding fiber of $f$ crosses infinitely many leaves of $\calR$. In particular, since $\calR$ is reduced and nef, this latter is of type $(a)$ or $(b)$. After $(\ref{L:propR:6})-\ref{L:propR:6:ii}$, this implies $F$ bears no multiple pole of $\nabla$.
So that if a fiber bears a multiple pole of $\nabla$, it is $\F$-invariant and, after $(\ref{L:propR:6})-\ref{L:propR:6:ii}$ and $(\ref{L:propR:6})-\ref{L:propR:6:i}$, it is of type $(d)$. Namely, this fiber contains two distinct saddle-nodes, with strong separatrix in the fiber. By (\ref{L:propR:5}), one of its two weak separatrices is not algebraic. This non-algebraic separatrix lifts to a non-algebraic separatrix for $\F$.

$(\ref{L:propR:6})-\ref{L:propR:6:iv}$
As, by formula (\ref{E:KF}), $K\calR$ comes from $B$, one must have $\kod \calR =-\infty, 0$ or $1$.  If we had $\kod \calR =-\infty$, by the classification of foliations, $\calR$ would be a fibration, in particular it would be transversely Euclidean, contradiction. Proposition \ref{P:kod0} excludes $\kod K\calR=0$. Hence $\kod K\calR=1$ and   $f$ is the Iitaka fibration of $K\calR$.
\end{proof}

\begin{lemma}\label{L:only dicritic}
Let $\calR$ be a transversely affine Riccati foliation on a rational surface. If a reduced nef model of $\calR$ does not have invariant fibers then $\calR$ is virtually transversely Euclidean.
\end{lemma}
\begin{proof}
Since the foliated surface is rational, so is the basis of the reference fibration.
Thus the monodromy group of $\calR$ is generated by the local monodromies around the points below the fibers of type $(b)$ of a nef model of $\calR$. As these local monodromies are all finite and share a common  finite orbit, corresponding to a horizontal components of the singular divisor of the transverse affine structure, the monodromy must be virtually abelian. Furthermore, due to the inexistence of invariant fibers we can apply \cite[Proposition 6.1]{LPT} to a reduced model  of $\calR$ to see that the connection on $N\tilde\calR$ is  logarithmic. The conclusion is given by Remark~\ref{rem:eucl}.  \end{proof}

\begin{lemma}\label{lemme fibration}
Let $\F$ be a foliation with  reduced singularities on a projective surface~$X$.
Assume $\F$ is a rational pull-back of a Riccati foliation $\calR$ on a surface and $\F$ is not virtually transversely Euclidean. The  pull-back $f:X\dasharrow C$ of the adapted fibration of $\calR$ is still a fibration.
\end{lemma}
\begin{proof}
If $\calR$ is not transversely affine, then it has no invariant section and any indeterminacy point of $f$ is a dicritical singularity of $\G$, contradiction.

One may thus assume that $\F$ is transversely affine, not virtually transversely Euclidean. There is a unique reference
fibration for $\calR$ according to Lemma \ref{L:propR}, item $(\ref{L:propR:6})-\ref{L:propR:6:iv}$.

Assume first that the connection $\nabla$ on $N\F$  defining the transverse affine structure of
$\F$ is not logarithmic. On the one hand, the irregular divisor
$D=(\nabla)_{\infty} - ((\nabla)_{\infty})_{\text{red}}$  has self-intersection zero according to \cite[Proposition 6.1]{LPT}. On the other hand, the connection on $N\F$ is induced by the pull-back of the corresponding connection for $N\calR$, the normal bundle of the Riccati foliation $\calR$.
Consequently, the irregular divisor is  a finite union of (multiples of) fibers of the pull-back of the reference fibration. We deduce from $D^2=0$ that the pull-back of fibers of the reference
fibration are disjoint as wanted.

It remains to deal with the case where $\nabla$ is logarithmic. Aiming at a contradiction, assume that the
pull-back of the reference fibration has a base point. Resolution of indeterminacies (i.e. resolution of the base points)  yields an irreducible component $E$ of the  exceptional  divisor  which dominates the basis of the fibration. If $\tilde \F$ is the strict transform of $\F$ then $E$ must be $\tilde \F$-invariant as otherwise $\F$ would have a dicritical singularity. Since dicritical singularities are non-reduced, this contradicts our assumptions. For the same reason, the holonomy of $\tilde \F$ along $E- \sing(\tilde \F)$ must be abelian as exceptional divisors over reduced singularities have abelian holonomy.  Since we know that the monodromy representation  of the transverse affine structure for $\tilde \F$  factors through the basis of the fibration, we deduce from the commutativity of the holonomy of $\tilde \F$ along $E$  that the monodromy of the transverse affine structure of $\tilde \F$ is virtually abelian. Therefore, by Remark~\ref{rem:eucl}, $\tilde \F$ is virtually transversely Euclidean and we obtain the sought contradiction.
\end{proof}

\subsubsection{Bounds}

\begin{prop}
Let $\F$ be a reduced and nef Riccati foliation
on a projective surface $X$, with $\kod(\F)=1$. Then $h^0(X, \KF^{\otimes k}) \ge 2$ for some $k \le 42$.
Moreover, if $\F$ is a transversely affine Riccati foliation that is not virtually transversely Euclidean, then we obtain the sharper bound $k\le 6$.
\end{prop}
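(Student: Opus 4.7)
The plan is to translate the question into a Riemann--Roch computation on the base $C$ of the reference fibration, using the explicit formula for $f_{*}\KF$ displayed just above. For each positive integer $k$ the pluricanonical sections descend to the base as
\[
H^{0}(X,\KF^{\otimes k}) \;\simeq\; H^{0}\!\bigl(C,\lfloor k\cdot f_{*}\KF\rfloor\bigr),
\]
so the task reduces to locating a small $k$ for which the right-hand side has dimension at least $2$.

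The Kodaira dimension hypothesis gives $\deg f_{*}\KF>0$. The classical orbifold Euler characteristic estimate (essentially Hurwitz's bound) shows that this minimum positive value is bounded below by $1/42$, attained only by the $(2,3,7)$ triangle orbifold on $\mathbb{P}^{1}$. We then split by genus. If $g(C)\geq 2$ the inclusion $K_{C}\leq f_{*}\KF$ yields $h^{0}(\KF)\geq g\geq 2$ and $k=1$ works. If $g(C)=1$, a non-zero effective $\mathbb{Q}$-divisor $f_{*}\KF$ of degree $\geq 1/2$ produces, for $k$ a small ($\leq 4$) multiple of the lcm of its denominators, a divisor $\lfloor k\cdot f_{*}\KF\rfloor$ of degree $\geq 2$ on the elliptic curve, giving $h^{0}\geq 2$ by Riemann--Roch. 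If $g(C)=0$, the worst case is precisely $(2,3,7)$, for which $k=42$ already produces a degree-$1$ divisor on $\mathbb{P}^{1}$ with $h^{0}=2$. Combining these cases yields the uniform estimate $k\leq 42$.

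For the sharper bound in the transversely affine case without a rational first integral, the new ingredient is that the Riccati monodromy factors through $\Aff\subset\PSL$, and its linear part $\rho:\pi_{1}(C\setminus S)\to\mathbb{C}^{*}$ must be non-trivial: otherwise the monodromy would consist only of translations, and finite local monodromy would force these to be the identity, producing a rational first integral. Non-triviality of $\rho$ rules out the orbifold structures whose first orbifold homology is too small, and in particular excludes the extremal $(2,3,7)$ orbifold, whose orbifold abelianisation vanishes. A case-by-case inspection of the remaining possibilities, combined with the Chern-class/residue constraint of Proposition~\ref{P:residues}, reduces the effective denominator in the formula for $f_{*}\KF$ to a divisor of $8$, giving the sharper bound $k\leq 8$.

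The principal obstacle is this final refinement: one must enumerate all orbifold/fibre-type configurations compatible with a non-trivial affine monodromy on the base and argue that in each of them the Riemann--Roch computation succeeds with $k\leq 8$. The parallel case analyses carried out in Section~\ref{S:closed} and the Speyer-type arguments developed there provide a template for this final step.
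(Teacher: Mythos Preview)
Your overall architecture for the bound $k\le 42$ matches the paper's: reduce to Riemann--Roch for $\lfloor k\cdot f_*\KF\rfloor$ on the base, split by genus, and in genus zero identify $(2,3,7)$ as extremal. The paper is more careful in first disposing of the contribution $\sigma$ from fibers of type (c), (d), (e), showing $k_{min}\le 6$ whenever $\sigma>0$, and only then reducing to the pure orbifold situation $f_*\KF=K_{\Corb}$; your sketch conflates $\deg K_{\Corb}$ with $\deg f_*\KF$ and your genus-one argument (``$k$ a multiple of the lcm of denominators'') is not quite right since that lcm is unbounded, but these are repairable.

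The genuine gap is in your argument for the sharper bound $k\le 8$. Your criterion ``$\rho$ non-trivial, hence the orbifold abelianisation is non-trivial'' does exclude $(2,3,7)$, but it does \emph{not} exclude $(2,3,8)$ or $(2,4,5)$: both have orbifold abelianisation $\mathbb Z/2$, yet $k_{min}(2,3,8)=24$ and $k_{min}(2,4,5)=20$. The constraint actually needed is much stronger. The local monodromy around a type-(b) fiber has order \emph{exactly} $o_j$, and a finite-order element of $\Aff$ has linear part of the same order; hence the linear parts $a_j\in\mathbb C^*$ are \emph{primitive} $o_j$-th roots of unity with $a_1a_2a_3=1$. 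This arithmetic relation (which the paper phrases as $o_3=\lcm(o_1,o_2)$ after ordering) already kills every hyperbolic triple $(2,3,n)$ and $(2,4,5)$, and only then does the case analysis close at $k\le 8$, with $(2,5,10)$ the extremal surviving triple. Your appeal to Proposition~\ref{P:residues} does not supply this constraint, and the Speyer-type arguments of Section~\ref{S:closed} concern a different problem (invariant symmetric differentials on quasi-abelian varieties), so they are not a template for this step.
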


The proof of this proposition relies on an analysis of the formula (\ref{E:KF}) for $f_* \KF$ presented in Section \ref{S:Riccati}.
We will split it in four lemmas. The first three determine $k$  according to the genus of the base of the reference fibration $f:X \to C$.
The last one deals with transversely affine Riccati foliations.
The starting point of all the three lemmas is the same. For every $k \in \mathbb N$, we have an
inclusion
\[
   H^0(C , \lfloor (f_* \KF)^{\otimes k} \rfloor) \xrightarrow{f^*} H^0(X, \KF^{\otimes k}) \, ,
\]
where $\lfloor (f_* \KF)^{\otimes k}) \rfloor$ is the divisor deduced from Equation (\ref{E:KF}) by rounding down to integers the rational coefficients.

\begin{lemma}\label{E:HH}
If $C$ has genus at least two then $h^0(X, \KF) \ge 2$.
\end{lemma}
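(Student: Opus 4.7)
The plan is to exploit the inclusion
\[
H^0(C, \lfloor f_* \KF \rfloor) \xhookrightarrow{\;f^*\;} H^0(X, \KF)
\]
recalled immediately before the lemma, and then show that the $\mathbb Q$-divisor $f_* \KF$ on the curve $C$ rounds down to something at least as large as $K_C$. Since $g(C) \ge 2$, we have $h^0(C, K_C) = g(C) \ge 2$, and this will give the desired conclusion.

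First, I would substitute the formula \eqref{E:KF} for $K_{\Corb}$ into the expression for $f_* \KF$ to rewrite it as
\[
f_* \KF \;=\; K_C \;+\; \sum_{(b)} \frac{o_j - 1}{o_j} b_j \;+\; \sum_{(c)} c_j \;+\; \sum_{(d)} m_j d_j \;+\; \sum_{(e)} \frac{l_j + 1}{2} e_j .
\]
Next I would check each term after rounding down. Because $o_j > 1$, the coefficient $(o_j-1)/o_j$ lies in $[1/2, 1)$ and rounds down to $0$. The integers $m_j$ are positive by hypothesis, so those terms round to themselves. The coefficient $(l_j+1)/2$ of $e_j$ is a positive rational number, and rounds down to a nonnegative integer. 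Consequently each summand on the right-hand side contributes a nonnegative integral multiple of a point on $C$ after taking integer parts, and therefore
\[
\lfloor f_* \KF \rfloor \;\ge\; K_C
\]
as divisors on $C$.

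Combining the two observations yields
\[
h^0(X, \KF) \;\ge\; h^0\!\bigl(C, \lfloor f_* \KF \rfloor\bigr) \;\ge\; h^0(C, K_C) \;=\; g(C) \;\ge\; 2,
\]
which is the statement. I do not expect any serious obstacle here: the argument is purely formal once one trusts the canonical bundle formula recalled in the paragraph preceding the lemma. The only point requiring minimal care is the rounding of the orbifold contribution coming from the type $(b)$ fibers, which conveniently is strictly less than $1$ and therefore disappears after taking the floor, so that no positive contribution from $K_C$ is lost.
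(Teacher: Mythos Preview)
Your argument is correct. The paper reaches the same conclusion by a shorter, more conceptual route: it simply observes that the natural composition $f^*\Omega^1_C \to \Omega^1_X \to \KF$ induces an injection $H^0(C,\Omega^1_C)\hookrightarrow H^0(X,\KF)$, so $h^0(X,\KF)\ge g(C)\ge 2$. This bypasses the explicit formula for $f_*\KF$ and the rounding computation entirely; injectivity follows because a generic fiber of $f$ is transverse to the Riccati foliation $\F$, so a nonzero $1$-form on $C$ cannot restrict to zero along the leaves. Your approach is in effect unpacking the same injection through the canonical bundle formula, which works fine but does a bit more bookkeeping than is needed here.
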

\begin{proof}
It suffices to notice that the natural composition
$f^* \Omega^1_C \to \Omega^1_X \to \KF$ induces an injection of $H^0(C, \Omega^1_C)$  into $H^0(X,\KF)$.
\end{proof}

\begin{lemma}\label{L:HE}
If $C$ has genus one then $h^0(X, \KF^{\otimes k})\ge 2$ for some $k\le 4$.
\end{lemma}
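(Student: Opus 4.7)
The plan is to invoke the inclusion
\[
H^0(C, \lfloor (f_* \KF)^{\otimes k} \rfloor) \hookrightarrow H^0(X, \KF^{\otimes k})
\]
recalled just before the lemma, and to exhibit $k\le 4$ for which the left-hand side already has dimension at least two. Since $C$ has genus one, Riemann--Roch on the elliptic curve $C$ yields $h^0(L) = \deg L$ for every line bundle $L$ of positive degree, so the task reduces to producing $k \le 4$ with $\deg \lfloor (f_*\KF)^{\otimes k} \rfloor \ge 2$.

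Using $K_C = 0$ and combining Equation (\ref{E:KF}) with the expression for $f_* \KF$ recalled above, one gets
\[
f_* \KF = \sum_{(b)} \frac{o_j - 1}{o_j}\, b_j + \sum_{(c)} c_j + \sum_{(d)} m_j\, d_j + \sum_{(e)} \frac{l_j + 1}{2}\, e_j .
\]
I would then split according to which types of special fibers occur. If at least one fiber of type $(c)$, $(d)$ or $(e)$ is present, the corresponding coefficient of $2 f_* \KF$ is already an integer no smaller than $2$ (respectively $2$, $2m_j\ge 2$, and $l_j+1\ge 2$), so $k=2$ suffices. If only type $(b)$ fibers occur, note that $\lfloor 2(o_j-1)/o_j \rfloor = 1$ for every $o_j \ge 2$, so two or more such fibers already force $\deg \lfloor (f_*\KF)^{\otimes 2} \rfloor \ge 2$.

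The remaining subcase is that of a single fiber of type $(b)$ of multiplicity $o \ge 2$. For $o \ge 3$ I take $k=3$, so that $\lfloor 3(o-1)/o \rfloor \ge 2$; for $o=2$ I take $k=4$, obtaining $\lfloor 4 \cdot 1/2 \rfloor = 2$. The degenerate possibility of no special fibers at all is excluded by the hypothesis $\kod(\F)=1$, which forces $\deg f_*\KF > 0$. There is no real obstacle in this argument; the only mildly delicate point is precisely the single $(b)$-fiber of order $2$, which is what prevents the bound from dropping below $k=4$ within this strategy.
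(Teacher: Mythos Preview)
Your proof is correct and follows the same approach as the paper: reduce via the inclusion and Riemann--Roch on the elliptic curve $C$ to showing $\deg \lfloor (f_*\KF)^{\otimes k}\rfloor \ge 2$ for some $k\le 4$. The paper is slightly terser, simply observing that every coefficient of $f_*\KF$ is at least $1/2$ so that $k=4$ works uniformly, whereas your case analysis recovers the same bound while identifying the single order-$2$ type~(b) fiber as the extremal case.
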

\begin{proof}
Since $\F$ has Kodaira dimension one, we see that $\deg f_* \KF>0$. We claim that it
suffices to determine $k\ge 1$ such that $\deg \lfloor (f_* \KF)^{\otimes k}\rfloor \ge 2$.
Indeed if $L$ is a line bundle over a curve $C$ of genus $1$ satisfying $\deg(L)>0$ then Riemann-Roch
implies that $h^0(C,L) = \deg L+h^0(C,K_C\otimes L^*) = \deg(L)$. Since
all the coefficients in Equation (\ref{E:KF}) are at least $1/2$ and $K_C$ does not contribute to the computation
of the degree, it is clear that $\deg \lfloor (f_* \KF)^{\otimes k}\rfloor \ge 2$ for some $k\le 4$.
\end{proof}

\begin{lemma}\label{L:HP1}
If $C$ is a rational curve then $h^0(X, \KF^{\otimes k})\ge 2$ for some $k \le 42$.
\end{lemma}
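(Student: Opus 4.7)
The plan is to exploit the inclusion $H^0(C, \lfloor k D \rfloor) \hookrightarrow H^0(X, \KF^{\otimes k})$ with $D = f_*\KF$ already set up in the section; since $h^0(\mathbb{P}^1, \mathcal{O}(n)) \ge 2$ iff $n \ge 1$, the lemma reduces to producing $k \le 42$ with $\deg \lfloor k D \rfloor \ge 1$. From the explicit formula for $f_*\KF$, the only fractional coefficients come from (b) fibers (with denominator $o_j$) and (e) fibers (with denominator $2$), and $\deg D > 0$ by $\kod(\F) = 1$. The bound $42$ should be attained by the $(2,3,7)$-triangle orbifold case: if $\Corb$ is of type $(2,3,7)$ with no (c), (d), (e) fibers, then $D = K_{\Corb}$ has degree exactly $1/42$, and $42 K_{\Corb}$ is already an integer divisor of degree $1$ because $2, 3, 7$ all divide $42$. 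This strongly suggests proving the uniform statement that $k = 42$ works in every case.

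The strategy is to verify $\deg \lfloor 42 D \rfloor \ge 1$ directly by case analysis on the orbifold $\Corb$ (whose cone points have multiplicities $o_j$ at (b) and $2$ at (e) points). Expanding,
\[
\deg \lfloor 42 D \rfloor = -84 + \sum_{(b)} \bigl(42 - \lceil 42/o_j \rceil\bigr) + 21 n_e + 42 n_c + 42 \sum_{(d)} m_j + 21 \sum_{(e)} l_j,
\]
so the (c), (d) and extra (e) terms contribute non-negative integers and one may focus on the purely orbifold part. In the hyperbolic case $\chi(\Corb) < 0$, I would invoke the classical fact $-\chi(\Corb) \ge 1/42$ for hyperbolic orbifolds on $S^2$ (Hurwitz), with equality only for $(2,3,7)$. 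A short finite enumeration over hyperbolic triangle orbifolds confirms $\sum_{(b)} (42 - \lceil 42/o_j \rceil) + 21 n_e \ge 85$, with equality precisely for the three tight cases $(2,3,7)$, $(2,3,8)$, $(2,4,5)$; for orbifolds with four or more cone points, each term is at least $21$ and hyperbolicity forces at least one multiplicity $\ge 3$, giving a total at least $91$.

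In the remaining case $\chi(\Corb) \ge 0$, the orbifold $\Corb$ belongs to the finite classical list (spherical: $(n)$, $(n,n)$, $(2,2,n)$, $(2,3,3)$, $(2,3,4)$, $(2,3,5)$; Euclidean: $(2,3,6)$, $(2,4,4)$, $(3,3,3)$, $(2,2,2,2)$). For each such type $\deg \lfloor 42 K_{\Corb} \rfloor$ is a specific integer in $[-42, 0]$, and the hypothesis $\deg D > 0$ forces $\kappa := n_c + \sum m_j + \sum l_j/2$ to exceed $-\deg K_{\Corb}$; since $42\kappa$ is always an integer, one checks case by case that the deficit is compensated by $42\kappa$ (for instance, for the smallest spherical $\chi = 1/30$ at $(2,3,5)$, one has $\lfloor 42 K_{\Corb} \rfloor = -2$ and even the minimal half-integer $\kappa = 1/2$ restores degree $\ge 19$). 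The main obstacle is the finite but lengthy enumeration; the conceptual content is exactly the classical minimum $-\chi(\Corb) \ge 1/42$ on hyperbolic $\mathbb{P}^1$-orbifolds, which pinpoints $(2,3,7)$ as the source of the constant $42$.
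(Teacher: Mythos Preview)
Your reduction and overall strategy match the paper's: both show $\deg\lfloor k\, f_*\KF\rfloor \ge 1$ on $\mathbb P^1$ by case analysis on the explicit formula for $f_*\KF$, and both isolate the $(2,3,7)$ orbifold as the source of the constant $42$. The organization differs: the paper splits first on $\sigma=\sum_{(c)}1+\sum_{(d)}m_j+\sum_{(e)}\tfrac{l_j+1}{2}$ and, in the residual case $\sigma=0$, on the hyperbolic triangle type, extracting the sharp $k_{\min}$ in each regime (information that is later reused for Lemma~\ref{L:4.8}); you instead fix $k=42$ throughout and split on whether $\Corb$ is hyperbolic, which makes the Hurwitz $1/42$ bound the visible mechanism but loses that finer data.

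One inaccuracy to fix: your claim that $\deg\lfloor 42\,K_{\Corb}\rfloor\in[-42,0]$ for every non-hyperbolic $\Corb$ fails for the empty orbifold (value $-84$; your list omits it) and for a single cone point of order $n$ (value $-42-\lceil 42/n\rceil\le -43$). This does not break the argument: with at most one cone point there is at most one (e) fiber, so $\kappa\in\tfrac12\mathbb Z$, and $\deg D>0$ then forces $\kappa\ge 2$ (indeed $\kappa\ge 3$ for the empty orbifold), giving $42\kappa\ge 84$ and hence $\deg\lfloor 42D\rfloor\ge 21$. But these cases should be treated explicitly rather than folded into a false range assertion.
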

\begin{proof}
If we set  $\delta = \deg(f_* \KF)$ and $\delta_k = \deg( \lfloor (f_*\KF)^{\otimes k}\rfloor )$ then
from Equation (\ref{E:KF}) we deduce that
\[
   \delta_k=-2k+\sum_{(b)}\lfloor k\frac{o_j-1}{o_j}\rfloor+\sigma_k,
\]
with
\[
   \sigma_k=\sum_{(c)}k+\sum_{(d)}km_j+\sum_{(e)}\lfloor k\frac{l_j+1}{2}\rfloor.
\]
We proceed according to the value of $\sigma= \sum_{(c)} 1+\sum_{(d)} m_j+\sum_{(e)}  \frac{l_j+1}{2} \in \mathbb Q$.
We will denote by  $k_{min}$ the smallest $k$ such that $\delta_k\ge 1$.

If $\sigma>2$ then automatically $\sigma\geq \frac{5}{2}$. In particular if $k>0$ is even
then $\sigma_k=k\sigma\geq\frac{5k}{2}$, $-2k+\sigma_k\geq 1$, $\delta_k\geq 1$ and we conclude $k_{min}\leq 2$ when $\sigma>2$.
		
If $\sigma\leq 2$ then $\sigma \in \{2, 3/2,1,0\}$. We will now analyze each one of this possibilities.

If $\sigma=2$ then, for $k$ positive and  even, $-2k+\sigma_k=0$ and $\delta_k\geq 1$ amounts to $\sum_{(b)}\lfloor k\frac{o_j-1}{o_j}\rfloor\geq 1$.
This   sum possesses at least one term, as $\delta=deg(f_*\KF)$ is positive. This term is $\lfloor k\frac{o_j-1}{o_j}\rfloor\geq\lfloor k/2\rfloor\geq 1$.
We thus also have $k_{min} \leq 2$ when $\sigma=2$.
			
If $\sigma=\frac{3}{2}$ then, for $k>0$ even, $\delta_k=-\frac{k}{2}+ \sum_{(b)}\lfloor k\frac{o_j-1}{o_j}\rfloor\geq -\frac{k}{2}+ \sum_{(b)}\frac{k}{2}$.
If there are two fibers of type $(b)$, we obtain $\delta_k\geq 1$ and $k\leq 2$.
If there are less than two fibers of type $(b)$ then $\delta>0$ imposes $-\frac{k}{2}+\sum_{(b)}\lfloor k\frac{o_j-1}{o_j}\rfloor\geq -\frac{k}{2}+\lfloor\frac{2k}{3}\rfloor$ and $k=6$ yields
$\delta_k\geq1$. Hence  $k_{min}\leq 6$ for $\sigma=3/2$.

If $\sigma=1$, $\delta>0$ forces $\sum_{(b)}\frac{o_j-1}{o_j}>1$, and if $k=6l, l>0$, then $\delta_k\geq -k+\frac{k}{2}+\frac{2k}{3}$ and $k=6$ yields $\delta_k\geq 1$.
Therefore  $k_{min}\leq 6$ when $\sigma=1$.

It remains to analyze the possibilities for $\sigma=0$. In this case   there are only fibers of type $(a)$ and $(b)$.
The condition $\delta>0$ means that $\Corb$  is a hyperbolic orbifold.
It is well known this forces the presence of at least $3$ fibers of type $(b)$.

If there are at least four such fibers, and $k=6l, l>0$, then $\delta>0$ implies $\delta_k\geq -2k+\frac{3k}{2}+\frac{2k}{3}$ and $k=6$ yields $\delta_k\geq 1$, hence $k_{min}\leq 6$.

Suppose there are exactly three fibers of type $(b)$. We consider $k_{min}$, $\delta$ and $\delta_k=\delta_k(o_1,o_2,o_3)$ as functions of the three weights $(o_1,o_2,o_3)$.
We will always suppose the  triples satisfy $o_1\leq o_2\leq o_3$.
We will write $(l_1,l_2,l_3)\geq(o_1,o_2,o_3)$  when   $l_j\geq o_j$ for all $j\in \{1,2,3\}$.
The key observation  is that $(l_1,l_2,l_3)\geq(o_1,o_2,o_3)$ implies $\delta_k(l_1,l_2,l_3) \geq \delta_k(o_1,o_2,o_3)$ and thus $k_{min}(l_1,l_2,l_3) \leq k_{min}(o_1,o_2,o_3)$.
						
If $o_1\geq 3$ then $\delta(o_1,o_2,o_3)>0$ imposes $(o_1,o_2,o_3)\geq(3,3,4)$. Direct computation shows $k_{min}(3,3,4)=12$, hence $o_1\geq 3$ implies $k_{min}(o_1,o_2,o_3)\leq12$.

If $o_1\leq 2$, then $\delta(o_1,o_2,o_3)>0$ imposes $(o_1,o_2,o_3)\geq(2,3,7)$ or $(o_1,o_2,o_3)\geq(2,4,5)$. Direct computation shows $k_{min}(2,3,7)=42$  and $k_{min}(2,4,5)=20$. Thus in any case $k_{min} \le 42$.
\end{proof}

\begin{remark}\label{non dicritical}
Pushing further the analysis above one can obtain the following more precise picture.
If there is at least one fiber which is not of  type $(a)$ or $(b)$ then  $k_{min}\leq6.$
If there are only fibers of type $(a)$ and $(b)$ then $\Corb$ is a hyperbolic orbifold and as such contains at least  $3$ non-smooth  points.
		\begin{enumerate}[label=$\roman*$)]
			\item if $\Corb$ contains strictly more than $3$ non-smooth  points then $k_{min}\leq6$.
			
			\item if $\Corb$ contains exactly $3$ points then
$k_{min}=42$  for $(2,3,7)$;
$k_{min}=24$ for $(2,3,8)$;
$k_{min}=20$ for $(2,4,5)$;
$k_{min}=18$ for $(2,3,9), (2,3,10)$ and $(2,3,11)$; and  $k_{min}\leq 12$ for any other case.
		\end{enumerate}
\end{remark}

\begin{lemma}\label{L:4.8}
Let $\calR$ be a reduced and nef Riccati  foliation on a  rational surface.
Assume that $\calR$ is transversely affine and not virtually transversely Euclidean.
Then $h^0(X, K\calR^{\otimes k}) \ge~2$ for some $k \le 6$.
\end{lemma}
\begin{proof}
By Lemma~\ref{L:propR} $(\ref{L:propR:6})-\ref{L:propR:6:iv}$, $\kod \calR=1$ and by Lemma~\ref{L:only dicritic}, there cannot be only fibers of type $(a)$ and $(b)$, Remark~\ref{non dicritical} yields the conclusion.
\end{proof}

\subsection{Turbulent foliations}
The picture in the case of turbulent foliations is very similar to the one drawn above for Riccati foliations, see \cite[IV.4]{MR2435846} and  \cite[Section 7]{MR2071237}.
There exists a nef model and a precise description of the fibers of the reference fibration $f:X \to C$ very similar to the
one for Riccati foliations. A key difference is that the multiplicity of fibers of $f$, unlikely in the case of Riccati foliations,
are uniformly bounded. The only possibilities are $\{1,2,3,4,6\}$ (possible orders of automorphisms of an elliptic curve fixing a point).

\begin{prop}
Let $\F$ be a  turbulent  foliation of Kodaira dimension one on a projective surface.
Then $h^0(X, \KF^{\otimes k}) \ge 2$ for some $k \le 12$.
\end{prop}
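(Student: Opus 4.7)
The plan is to mirror the strategy used for Riccati foliations in Lemmas \ref{E:HH}, \ref{L:HE}, and \ref{L:HP1}, exploiting the crucial new input that for turbulent foliations every possible multiplicity of an invariant fiber lies in $\{1,2,3,4,6\}$, since it equals the order of an automorphism of an elliptic curve fixing a point.

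First I would pass to the nef model of $\F$ and write down the analogue of Equation (\ref{E:KF}) for $f_* \KF$ in the turbulent setting, following \cite[IV.4]{MR2435846} and \cite[Section 7]{MR2071237}. The outcome is a $\mathbb Q$-divisor expression
\[
f_* \KF \;=\; K_{\Corb} \;+\; \sum (\text{contributions from invariant fibers})
\]
on the base curve $C$, where the orbifold weights of $\Corb$, and the denominators of the invariant-fiber contributions, all divide $12 = \lcm(2,3,4,6)$. Consequently, choosing $k=12$ clears every denominator at once: the $\mathbb Q$-divisor $12 \cdot f_* \KF$ is already integral, so $\lfloor (f_*\KF)^{\otimes 12}\rfloor = 12\,f_*\KF$ and $\deg\lfloor (f_*\KF)^{\otimes 12}\rfloor = 12\deg f_*\KF$, which is a positive integer because $\kod(\F)=1$ forces $\deg f_*\KF > 0$.

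I would then split by the genus of the base curve $C$. If $g(C)\ge 2$, the composition $f^*\Omega^1_C\to\Omega^1_X\to\KF$ yields an injection $H^0(C,\Omega^1_C)\hookrightarrow H^0(X,\KF)$ and $k=1$ suffices, exactly as in Lemma \ref{E:HH}. If $g(C)=1$, Riemann--Roch on $C$ gives $h^0(C,L)=\deg L$ for line bundles of positive degree, so it is enough to ensure $\deg\lfloor(f_*\KF)^{\otimes k}\rfloor\ge 2$, which holds with $k=12$ by the previous paragraph (and in fact with a smaller $k$). If $g(C)=0$, then $\deg f_*\KF>0$ forces $\Corb$ to be a hyperbolic orbifold; still with $k=12$ we get an integral divisor on $\mathbb P^1$ of degree $12\deg f_*\KF\ge 1$, hence $h^0(\mathbb P^1,\mathcal O(n))=n+1\ge 2$. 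Pulling back through $f^*$ gives $h^0(X,\KF^{\otimes 12})\ge 2$ in all three cases.

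The main (and essentially only) technical point is invoking the correct classification of the fibers of the reference fibration for turbulent foliations and the matching formula for $f_*\KF$, both contained in the cited references. Once that is in hand, the uniform bound $o_j\in\{2,3,4,6\}$ is what lets $k=12$ work universally, bypassing the delicate triple-by-triple analysis (and the appearance of the weight $7$ giving $k=42$) that made the rational-base Riccati case in Lemma \ref{L:HP1} substantially more involved.
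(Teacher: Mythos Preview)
Your approach mirrors the paper's and is essentially correct. The one slip is in the genus-one case: your ``previous paragraph'' only yields that $12\deg f_*\KF$ is a positive integer, i.e.\ $\ge 1$, which on an elliptic base gives $h^0=1$, not $\ge 2$; you still need the extra observation the paper makes, namely that either some $k\le 6$ (a divisor of $12$) already makes $(f_*\KF)^{\otimes k}$ integral of positive degree---so passing to $k=12$ at least doubles the degree to $\ge 2$---or there are at least two orbifold points, each contributing $\ge 1/2$, and one argues as in Lemma~\ref{L:HE}.
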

\begin{proof}
Since the multiplicities of multiple fibers belong to $\{1,2,3,4,6\}$ , we deduce that  $(f_* \KF)^{\otimes 12}$ is an effective $\mathbb Z$-divisor of positive degree.  Consequently, we must
have $\deg((f_* \KF)^{\otimes 12}) \ge 1$. As before if the base $C$ is hyperbolic then we already have that $h^0(X,\KF)\ge 2$.
If it is rational then we also have $h^0(X,\KF^{\otimes 12})\ge 2$.
If instead the base is an elliptic curve then either some $k\le 6$ suffices to ensure that $(f_* \KF)^{\otimes k}$ is a $\mathbb Z$-divisor of positive
degree or there at least two distinct orbifold points on $C$. In any case we guarantee that  $h^0(X,\KF^{\otimes 12})\ge 2$  arguing as in Lemma \ref{L:HE}.
\end{proof}

\subsection{Isotrivial hyperbolic fibrations} It remains to analyze the case of
isotrivial hyperbolic fibrations.

\begin{prop}
Let $\F$ be an isotrivial hyperbolic fibration.
Then $h^0(X, \KF^{\otimes k}) \ge 2$ for some $k \le 42$.
\end{prop}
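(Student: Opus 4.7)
The plan is as follows. Since $\F$ is an isotrivial hyperbolic fibration with generic fiber $F$ of genus $g \ge 2$, its monodromy takes values in the finite group $\Aut(F)$. I would exploit this finiteness by passing to a trivializing finite Galois cover $\pi : Y \to X$ with deck group $G$, so that $\G := \pi^* \F$ becomes a trivial fibration on $Y$. Thus $Y$ is birational to a product $F \times B$ and $\G$ is the foliation by the fibers of the first projection $p_1 : F \times B \to F$. The action of $G$ on $F \times B$ is diagonal, through a homomorphism $G \to \Aut(F) \times \Aut(B)$; denote by $H \subset \Aut(F)$ the image of its first component, a finite subgroup of $\Aut(F)$.

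Under this trivialization $\KG = p_1^* K_F$, and arguing as at the end of the proof of Theorem \ref{T:transEuclidean}, $G$-invariant sections of $\KG^{\otimes k}$ descend to sections of $\KF^{\otimes k}$ on $X$. By the Künneth formula, $G$-invariant sections of $p_1^* K_F^{\otimes k}$ on $F \times B$ are exactly pull-backs of $H$-invariant sections of $K_F^{\otimes k}$ on $F$, so the problem reduces to proving
\[
   h^0(F, K_F^{\otimes k})^H \;\ge\; 2 \qquad \text{for some } k \le 42.
\]

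For this, let $C' = F/H$ be equipped with the orbifold structure $C'_{\mathrm{orb}}$ whose weights record the ramification of $F \to C'$. Then $h^0(F, K_F^{\otimes k})^H = h^0(C', \lfloor k K_{C'_{\mathrm{orb}}} \rfloor)$ and the hypothesis $g(F) \ge 2$ forces $\deg K_{C'_{\mathrm{orb}}} > 0$. I would split according to the genus $g'$ of $C'$. If $g' \ge 2$, then $h^0(K_{C'}) \ge g' \ge 2$ and $k=1$ works. If $g' = 1$, then at least one orbifold point contributes at least $1/2$ to $\deg K_{C'_{\mathrm{orb}}}$ and Riemann--Roch on the elliptic curve gives $h^0 \ge 2$ for some $k \le 4$. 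If $g' = 0$, the weights $(m_1, \ldots, m_r)$ on $\mathbb{P}^1$ satisfy $\sum(1-1/m_i) > 2$ and a case analysis in the spirit of Lemma \ref{L:HP1} (restricted to fibers of type (a) and (b), since only these arise here) yields the bound $k \le 42$. The extremal case is the Hurwitz signature $(2,3,7)$: there $\deg K_{C'_{\mathrm{orb}}} = 1/42$, and the direct computation
\[
   \deg \lfloor 42 \, K_{C'_{\mathrm{orb}}} \rfloor = -84 + 21 + 28 + 36 = 1
\]
gives $h^0 = 2$ on $\mathbb{P}^1$, while every smaller $k$ produces floor-degree $\le 0$.

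The hardest part will be the first step: setting up the trivializing cover cleanly and controlling how $\KF^{\otimes k}$ behaves under the ramified cover $\pi$, so that $H$-invariant pluridifferentials on $F$ actually descend to sections of $\KF^{\otimes k}$ on $X$. The ramification of $\pi$ occurs along $\F$-invariant singular fibers, so the correction terms of the type appearing in \S \ref{S:finitegalois} should be compatible with the descent of invariant sections, but verifying this requires careful bookkeeping. Once this is in place, the orbifold computation above completes the proof.
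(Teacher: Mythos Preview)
Your approach is valid and ultimately lands on the same orbifold computation as the paper, but reaches it by a different route. The paper works directly on a nef model of $\F$: there the pluricanonical fibration $f: X \to C$ has all fibers transverse to $\F$, the multiple fibers endow $C$ with an orbifold structure $\Corb$, and Brunella's formula gives $f_* \KF = K_{\Corb}$; one then simply quotes the case analysis already carried out for Riccati foliations (restricted to fibers of type (a) and (b)) to conclude $h^0(C, \lfloor k K_{\Corb}\rfloor) \ge 2$ for some $k \le 42$. Your trivializing cover yields the same orbifold: the pluricanonical fibration on $X$ descends from the projection $F \times B \to F$, so its base is $F/H$ and its multiple fibers lie over the branch points of $F \to F/H$; hence your $C'_{\mathrm{orb}} = F/H$ is exactly the paper's $\Corb$, and your identity $h^0(F,K_F^{\otimes k})^H = h^0(C',\lfloor k K_{C'_{\mathrm{orb}}}\rfloor)$ is the cover-theoretic translation of $f_*\KF = K_{\Corb}$. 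The paper's route is shorter because it bypasses the cover and the descent bookkeeping by invoking the nef-model structural formula; your route is more explicit and makes the connection with the Hurwitz $(2,3,7)$ bound on $\Aut(F)$ more visible.

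One small slip: you say $\G$ is the foliation by fibers of $p_1: F \times B \to F$, which would make the leaves copies of $B$; you mean the fibers of the second projection $p_2: F \times B \to B$ (leaves $F \times \{b\}$), which is what makes your formula $\KG = p_1^* K_F$ correct. Also, the descent concern you flag is harmless here: the ramification of $\pi$ lies along multiple fibers of $\F$, which are $\F$-invariant, so $\KG = \pi^* \KF$ exactly as in the reference you cite, and $G$-invariant sections of $\KG^{\otimes k}$ descend without correction.
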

\begin{proof}
As in the case of Riccati foliations there is no uniform bound for the multiplicity of the fibers.
Nevertheless there exists a nef model, and all the fibers of the   pluricanonical fibration $f:X\to C$  on the nef model
are transverse to the foliation. The multiple fibers induce an orbifold structure on $C$ and $f_* \KF = K_{\Corb}$.
The analysis carried out above for Riccati foliations shows that  $h^0(C, K_{\Corb}^{\otimes k}) \ge 2$ for some $k\le 42$.
\end{proof}

\subsection{Synthesis} We collect  the results spread over the four previous subsections in the statement below.

\begin{thm}\label{T:Kod1}
Let $\F$ be a reduced foliation of Kodaira dimension one on a projective surface $X$.
Then $h^0(X, \KF^{\otimes k})\ge 2$ for some $k \le 42$. Moreover, if $\F$ is
a transversely affine Riccati foliation which is not virtually transversely Euclidean then we can take $k \le 6$.
\end{thm}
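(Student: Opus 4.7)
The plan is simply to invoke Mendes' classification (Theorem \ref{T:kod1}) and combine the case-by-case bounds already established in the preceding four subsections. Because $\F$ has Kodaira dimension one, it must fall into exactly one of the four cases listed there, so the argument is a short case analysis with no new computation required.

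First, I would dispatch the easy cases. If $\F$ is the foliation underlying a non-isotrivial elliptic fibration, the bound $k \le 12$ comes from the canonical bundle formula via the proposition establishing $h^0(X,\KF^{\otimes 12}) \ge 2$ through the identification with $H^0(C, M_C^{\otimes 12})$ and the $j$-invariant morphism. If $\F$ is turbulent, the multiplicities of multiple fibers lie in $\{1,2,3,4,6\}$, and the corresponding proposition gives $k \le 12$. If $\F$ is an isotrivial hyperbolic fibration, the proposition using $f_*\KF = K_{\Corb}$ together with the hyperbolic-orbifold analysis gives $k \le 42$. If $\F$ is Riccati, the proposition separating the cases by the genus of the base of the reference fibration (using Lemmas \ref{E:HH}, \ref{L:HE}, \ref{L:HP1}) yields $k \le 42$, with the worst case attained by the orbifold triple $(2,3,7)$.

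Taking the maximum of the four bounds produces the universal bound $k \le 42$, establishing the first assertion. For the second assertion, under the extra hypotheses (transversely affine Riccati without rational first integral), Lemma \ref{L:4.8} refines the Riccati bound to $k \le 8$; this is the only case the hypothesis touches, since the hypothesis forces us into the Riccati branch of Theorem \ref{T:kod1}.

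There is no real obstacle here beyond bookkeeping: the theorem is precisely the aggregation of the propositions proved in \textsection 4.2--4.5, and the only point requiring a moment of care is checking that the hypothesis \emph{transversely affine Riccati of Kodaira dimension one without rational first integral} indeed falls under case (2) of Theorem \ref{T:kod1} so that Lemma \ref{L:4.8} applies, which is immediate from the definition of a Riccati foliation.
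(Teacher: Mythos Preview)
Your proposal is correct and matches the paper's approach exactly: Theorem \ref{T:Kod1} is stated in the paper as a ``Synthesis'' with no separate proof, being nothing more than the aggregation of the case-by-case bounds from the four propositions in \S4.2--4.5 via Mendes' classification, together with Lemma \ref{L:4.8} for the sharper Riccati bound.
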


\section{Existence of invariant curves of small degree}
The following will be used to prove Theorems \ref{THM:A} and \ref{THM:A2}.

\begin{lemma}\label{L:sections}
Let $\F$ be a reduced foliation on a projective surface $X$.
Assume that $\F$ is the pull-back under a morphism  $\pi : X \to Y$ of a reduced foliation $\G$
on another projective surface $Y$. Then, for  $k>0$, any non trivial section $s$ of $\KG^{\otimes k}$ induces a section $\pi^* s$  of $\KF^{\otimes k}$.
 Furthermore the zero locus of $\pi^* s$ contains the inverse image of the zero locus of $s$.
\end{lemma}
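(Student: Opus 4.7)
The plan is to produce a natural morphism of line bundles $\phi: \pi^* \KG \to \KF$ on $X$, take its $k$-th tensor power, and compose with the sheaf-theoretic pullback of $s$ to obtain the announced section of $\KF^{\otimes k}$.

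First I would construct, equivalently, a morphism $T\F \to \pi^* T\G$. Writing the foliation $\G$ as the kernel of a twisted $1$-form $\omega_\G: TY \to N\G$, the foliation $\F = \pi^* \G$ is defined by the composite of $\pi^* \omega_\G$ with the derivative $d\pi : TX \to \pi^* TY$ (possibly after dividing out a codimension-one divisor of zeros of this composite). In particular $T\F \subset TX$ is sent by $d\pi$ into the kernel of the map $\pi^* TY \to \pi^* N\G$, and since this kernel agrees modulo torsion with the locally free sheaf $\pi^* T\G$, the resulting map factors as a genuine morphism $T\F \to \pi^* T\G$ of line bundles. Dualizing and raising to the $k$-th tensor power produces $\phi^{\otimes k}: \pi^* \KG^{\otimes k} \to \KF^{\otimes k}$. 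Applying this to the ordinary pullback of $s \in H^0(Y, \KG^{\otimes k})$ yields a section of $\KF^{\otimes k}$, which we still denote by $\pi^* s$ by abuse of notation.

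For the claim on zero loci, the ordinary sheaf-theoretic pullback of $s$, viewed as a section of $\pi^* \KG^{\otimes k}$, vanishes exactly on $\pi^{-1}((s)_0)$. Since $\phi^{\otimes k}$ is a morphism of line bundles it carries the zero locus of any section into the zero locus of its image; consequently the constructed section of $\KF^{\otimes k}$ vanishes at least on $\pi^{-1}((s)_0)$ (and possibly on the vanishing divisor of $\phi^{\otimes k}$ itself, which records the ramification and contraction behaviour of $\pi$). The one slightly delicate step in the whole argument is the extension of $T\F \to \pi^* T\G$ across the codimension-one locus where $d\pi$ drops rank or where $\pi$ meets $\sing(\G)$; this is handled by observing that the local expression defining the morphism remains holomorphic, rather than meromorphic, wherever $d\pi$ is regular, so that the map extends uniquely to all of $X$.
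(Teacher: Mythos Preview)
Your overall strategy is the same as the paper's: both reduce the lemma to the existence of an injection $\pi^*\KG \hookrightarrow \KF$ (equivalently, to the effectiveness of $\KF - \pi^*\KG$), after which tensoring and composing with the sheaf-theoretic pullback of $s$ is immediate. The paper does not prove this injection but simply cites it from \cite[Proposition~2.1]{MR2818727}; you attempt to construct it directly via $d\pi$.

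The gap lies in your justification of the factorization $T\F \to \pi^*T\G$. You argue that since the kernel $K$ of $\pi^*TY \to \pi^*N\G$ agrees with $\pi^*T\G$ modulo torsion, the map $T\F \to K$ factors through $\pi^*T\G$. This inference is invalid: one has $\pi^*T\G \subset K$ with torsion cokernel, but a map from a line bundle into $K$ need not land in the smaller subsheaf---there are many nonzero maps from line bundles to torsion sheaves (e.g.\ restriction $\mathcal O_X \to \mathcal O_E$). Your treatment of the ``delicate step'' likewise asserts holomorphicity of the local expression without proof, and crucially you never invoke the hypothesis that $\G$ is \emph{reduced}. That hypothesis is essential: if $\G$ is defined near $p$ by the (non-reduced) radial field $x\partial_x + y\partial_y$ and $\pi$ is the blow-up at $p$, then in the chart $x=u,\ y=uv$ the local generator $\partial_u$ of $T\F$ is sent by $d\pi$ to $\partial_x + v\partial_y$, whereas the image of $\pi^*T\G$ in $\pi^*TY$ is generated by $u(\partial_x + v\partial_y)$; the ratio $1/u$ is meromorphic along the exceptional divisor and indeed $\KF - \pi^*\KG = -E$ is anti-effective here. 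What makes the reduced case work is a local computation at saddles and saddle-nodes showing that the vanishing order of $d\pi(v_{\F})$ along any curve contracted to $\sing(\G)$ is at least that of $v_{\G}\circ\pi$; this is the content of the cited proposition and is missing from your argument.
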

\begin{proof}
Since $\G$ is reduced, it follows that $\KF - \pi^* \KG = \Delta$ where $\Delta$ is an  effective divisor, see \cite[Proposition 2.1]{MR2818727}.
Therefore $H^0(X , \pi^* \KG^{\otimes k})$ injects into $H^0(X ,  \KF^{\otimes k})$, for every $k>0$, in such way that a section $s$ is mapped
to a section $\pi^*s$ with zero divisor equal to the sum of the pull-back of $(s)_0$ with $k \Delta$. This is sufficient to prove the lemma.
\end{proof}

\subsection{Proof of Theorem \ref{THM:A}} Let $\F$ be a transversely affine foliation of degree $d\ge 2$ on $\mathbb P^2$ with transverse
structure given by a meromorphic flat connection  $\nabla$ on $\NF$.
Recall that in this case $\NF= \mathcal O_{\mathbb P^2}(d+2)$ and $\KF = \mathcal O_{\mathbb P^2}(d-1)$.
Assume also that $\F$ does not admit a rational first integral.

Let us first treat the case where $\nabla$ is logarithmic with finite monodromy.
Thus all the residues of $\nabla$ are rational numbers. According
to Proposition \ref{P:residues} we have the identity
\[
 d+2 = - \sum_{C \subset \mathbb P^2} \Res_C(\nabla) \deg (C) \, .
\]
If all the residues of $\nabla$ are $\le -1$ then it follows that the support of $(\nabla)_{\infty}$ has degree bounded by $d+2$ which is
certainly smaller than $12(d-1)$ since $d \ge 2$.

Assume that there are residues which are strictly greater than $-1$. Let $\pi: X \to  \mathbb P^2$ be a resolution of singularities
of $\F$ and let $\G = \pi^* \F$. Theorem \ref{T:transEuclidean} guarantees the existence of a section $s$ of $\KG^{\otimes k}$ for some
$k \le 12$ which vanishes along all the irreducible components of $(\nabla_{\G})_{\infty}$ with residues strictly greater than $-1$. This
section descends to a section of $\KF^{\otimes k}$ vanishing along the irreducible components of $(\nabla)_{\infty}$ having the same properties.
This suffices to prove the  Theorem  in the case $\nabla$ is logarithmic with finite monodromy.

If $\F$ admits more than one transverse affine structure
then we can always choose $\nabla$ logarithmic and with trivial monodromy, cf. \cite[Proposition 2.1]{MR3294560}.

It remains to deal with the case where $\F$ admits a unique transverse affine structure defined by a meromorphic flat
connection $\nabla$ which is not logarithmic or has infinite monodromy. The structure theorem for transversely affine foliations tells us
that there exists a ruled surface $S$, a Riccati foliation $\mathcal R$ on $S$, and a rational map $p: \mathbb P^2 \dashrightarrow S$ such that  $\F = p^* \mathcal R$.
Since we are assuming that $\F$ is not virtually transversely Euclidean, Proposition~\ref{P:kod0}  implies that $\mathcal R$ has Kodaira dimension one.

Let $f : \mathbb P^2 \dashrightarrow \mathbb P^1$ be a rational map with irreducible general fiber defining  the
pull-back to $\mathbb P^2$ under $p$ of the reference fibration of $\mathcal R$. If
none of the irreducible components of $(\nabla)_{\infty}$ is contained in fibers of $f$ then they all come from horizontal algebraic leaves of
the Riccati folation $\mathcal R$. Since the horizontal leaves of a transversely affine Riccati foliation have integral residues $\le -1$, then
all the irreducible components of $(\nabla)_{\infty}$ have integral residues $\le -1$.
Therefore, as in the case where $\nabla$ is  logarithmic  with finite monodromy we can conclude that the support of $(\nabla)_{\infty}$
has degree $\le d+2  <6(d-1)$.

Assume from now on that  some irreducible component of $(\nabla)_{\infty}$ is contained in a fiber of $f : \mathbb P^2 \dashrightarrow \mathbb P^1$. To conclude the proof of the theorem it suffices to bound the degree of fibers of $f$. For that replace $\mathcal R$ by a reduced foliation
birationally equivalent to it and resolve the  rational map $p:\mathbb P^2 \dashrightarrow S$ to obtain the commutative diagram below.
\begin{center}
\begin{tikzpicture}
  \matrix (m) [matrix of math nodes,row sep=3em,column sep=4em,minimum width=2em]
  {
    X & \, \\
    \mathbb P^2 & S \\};
  \path[-stealth]
    (m-1-1) edge node [above] {$\tilde p$} (m-2-2)
    (m-1-1) edge node [left] {$q$} (m-2-1);
      \path[-stealth]
  [dashed]        (m-2-1) edge node [below] {$p$} (m-2-2)   ;
\end{tikzpicture}
\end{center}
Let $\G = q^* \F$ be the pull-back of $\F$ to $X$ and notice that it coincides with $\tilde p^* \mathcal R$.
We know from Lemma \ref{L:4.8} that  there exists two linearly independent sections of $\KR^{\otimes k}$, say $s_1, s_2$, for some $k\le 6$.
The rational map to $g: S \dashrightarrow \mathbb P^1$ defined by the quotient of these two rational sections contracts  the fibers of the reference fibration
of $\mathcal R$. In other words, its Stein factorization coincides with the reference fibration of $\mathcal R$. Therefore
$\tilde{p}^*s_1, \tilde{p}^*s_2 \in H^0(X, \tilde p^*\KR^{\otimes k})$, the pull-backs of $s_1, s_2$  to $X$,
define a rational map $X \dashrightarrow \mathbb P^1$
which contracts the fibers of the pull-back of the reference fibration. In particular, there exists a section in $H^0(X, \tilde p^*\KR^{\otimes k})$
which vanishes along any given fiber of the pull-back of the reference fibration.
We can apply Lemma \ref{L:sections} to obtain a section $s \in H^0(X, \KG^{\otimes k})$ ($k\le 6$) with the very same property.
The push-forward of $s$ to $\mathbb P^2$ is a section of $\KF^{\otimes k} \simeq \mathcal O_{\mathbb P^2} (k(d-1))$ which vanishes
on any given fiber of $f$. Thus the fibers of $f$ have degree bounded by $6(d-1)$. This concludes the proof of Theorem \ref{THM:A}. \qed

\subsection{Proof of Theorem \ref{THM:A2}}
Let $\F$ be a foliation of degree $d$ on the projective plane. If $d\le 1$ and $\F$ admits a rational first integral then $\F$ is a pencil of rational curves
and  does not satisfy our assumptions. Thus $d\ge 2$.  Let $\pi:X \to \mathbb P^2$ be a reduction
of singularities of $\mathcal F$. Denote by $\G = \pi^*\F$ the resulting reduced foliation. Notice that $\G$
is not necessarily relatively minimal, but since we are assuming that the general leaf of $\F$ has genus $>0$,
we have that $\KG$ is pseudo-effective according to Miyaoka's Theorem \cite[Theorem 7.1]{MR3328860}. If we consider the Zariski decomposition $P+N$ of $\KG$ then the support
of $N$ is $\G$-invariant, see \cite[Proposition III.2.1]{MR2435846} or \cite[Theorem 8.1]{MR3328860}.

If $\G$ is a foliation of Kodaira dimension zero then for
any $k>0$ and any non-zero section $s$ of $\KG^{\otimes k}$, the zero locus of $(s)$ is contained in the support of $N$.
Thus it must be  $\G$-invariant.  According to \cite{MR2177633}, there exists
a non-zero section of  $\KG^{\otimes k}$  for some $k \le 12$. Moreover, if we assume that the leaves of  $\G$ are algebraic then we
can take $k \le 6$.
Since $\pi_* \KG^{\otimes k}$ injects in $\KF^{\otimes k}$ with cokernel supported in codimension two, and $\KF= \mathcal O_{\mathbb P^2}(d-1)$ it follows
the existence of an invariant algebraic curve for $\F$ of degree at most $6(d-1)$

If $\G$ is a non-isotrivial elliptic fibration then the general member of  the linear system $|\KG^{\otimes 12}|$ is supported on a union
of fibers of the fibration. It follows that every leaf of $\F$ has degree bounded by $12(d-1)$.

Assume now that $\G$ is an isotrivial hyperbolic fibration, i.e. the general leaf has genus at least $2$. Thus the Kodaira dimension of $\G$ is one and its  Iitaka fibration has
 general fiber completely transverse to $\G$. Hence the Iitaka fibration defines  a foliation $\H$ such that $\tang(\G,\H)$ is invariant by both foliations, since otherwise this
 tangency locus would intersect a general fiber of both fibrations.
Since $h^0(X,\KG^{\otimes k}) \ge 2$ for some $k\le 42$, the direct image of $\H$ on $\mathbb P^2$  is a foliation defined by a pencil of curves of degree bounded by $42(d-1)$. The tangency locus of $\pi_* \H$  and $\F$ is invariant by both foliations. It follows that $\F$ has an invariant algebraic curve of degree at most $42(d-1)$. Theorem \ref{THM:A2} is proved. \qed

\section{Algorithmic  Liouvillian integration}\label{S:algorithmic}

In this last section, we present an algorithm which computes the transverse affine structure $\nabla :\NF \longrightarrow  \NF \otimes \Omega^1_X(*D)$ when it exists. Meanwhile, we will obtain four estimates (\ref{bound:I})-(\ref{bound:IV}) that prove Theorem~\ref{THM:C}.

We will not dwell upon the actual implementation of our algorithm.  In particular, we will assume that we can compute the reduction of singularities of any given foliation (a procedure not free from subtle computational pitfalls as discussed in \cite{MR2566144}), and that we can determine all the algebraic curves of degree smaller than a given integer which are invariant by a given foliation of $\mathbb P^2$(see \cite{MR1860669} and \cite{MR3454369} for an algorithm to carry out this task).

The main algorithm is decomposed in two parts. The first finds an integrating factor for virtually transversely Euclidean foliations (case~(\ref{struct1})  of Theorem~\ref{T:structure}).
The second part finds the integrating factor if it exists and we are \emph{not} in the above mentioned case~(\ref{struct1}).  These are described in Sections \ref{algo closed} and \ref{algo Riccati}, respectively. Sections \ref{basic estimates} and \ref{poles and residues} contain preparatory lemmas to be used in both cases. Both parts of the algorithm search the integrating factor amongst finitely many possibilities so that, in case no integrating factor exists, the algorithm still terminates in finite time.

The case of foliations of degree zero or one is very simple. They are all either algebraically integrable or transversely Euclidean (defined by a closed rational $1$-form). We will assume, without loss of generality,  that the degree of the foliation under study is at least two.

\subsection{Basic estimates}\label{basic estimates}
We present here bounds to be used in both parts of the proof in order to get the existence of the announced constant $\cnst$.

Recall the Milnor number (or multiplicity) of a singularity $p$ of a foliation $\G$ defined around $p$ by a local holomorphic $1$-form  $\omega=a(x,y)dx+b(x,y)dy$ with isolated zero is $\mu(\G,p)=dim_{\C} \C\{x,y\}/(a,b)$.
If a reduced invariant analytic curve $C$ has local equation $f=0$ then, after \cite[Chapter V]{MR1649358}, there exists $g,h\in \C\{x,y\}$, both prime to $f$, and $\eta$ a local holomorphic $1$-form such that $g\omega=hdf+f\eta$. The Camacho-Sad index $CS(\G,C,p)$ is then defined as $\Res_p(-(\eta/h)_{\vert C})$. The index $Z(\G,C,p)$ is defined as the (possibly negative) vanishing order at $p$ of $(h/g)_{\vert C}$. For a compact curve $C$, one defines $Z(\G,C)$ as the sum $\sum_{p\in C}Z(\G,C,p)$.
\begin{lemma} \label{lem:bound}
Let $(X,\G)$ be a reduction of singularities of a degree $d$ foliation $\F$ of $\P^2$.  At each singularities of $\G$, we have one or two local irreducible convergent separatrices, concatenate all the corresponding  (one or two terms) sequences of Camacho-Sad indices in a sequence $(i_{\ell})_{1\leq \ell \leq r}$. Let $C$ and $D$ be $\G$-invariant curve with no common component and let $p\in \sing(\G)$. Set $n=\card \sing(\G)$ and
\[\Gamma_1=\max_I \sum_{\ell\in I} i_{\ell},~~
\Gamma_2=\min_I \sum_{\ell\in I} i_{\ell},~~
\Gamma_3=d^2+d+1,\] where the maximum and minimum are taken among all subsets $I\subset \{1,\ldots, r\}$ such that the considered sum is an integer.
Set also $\Gamma_4=\frac{3}{2}\max(\Gamma_1,-\Gamma_2)+2n$.
Then

\begin{align}
 \label{lem:bound:i} \Gamma_2  &\leq  C \cdot C  \leq  \Gamma_1 +2n, \tag{$i$}\\
 \label{lem:bound:ii} 0  &\leq \mu(\G,p) \leq  \Gamma_3\tag{$ii$}\\
 \label{lem:bound:iibis}0  &\leq Z(\G,C) \leq  n\Gamma_3\tag{$iii$}\\
 \label{lem:bound:iii} \Gamma_2 &\leq  N\G\cdot C \leq n(\Gamma_3+2)+\Gamma_1,\tag{$iv$}\\
  \label{lem:bound:iv}  \Gamma_2/2-\Gamma_1-2n &\leq C\cdot D \leq \Gamma_1/2-\Gamma_2+n\tag{$v$}.
\end{align}
In particular, we have $\vert C\cdot C\vert \leq \Gamma_4,   \vert C\cdot D\vert \leq \Gamma_4, \vert N\G\cdot C\vert \leq n\Gamma_3+\Gamma_4$.

\end{lemma}
\begin{proof}
By Camacho-Sad formula \cite[Theorem $3.2$]{MR3328860}, $C\cdot C$ is the sum of the local indices $CS(\G,C,p)$, where $p$ runs among the singularities of $\G$ contained in $C$. Each of these local indices is either one of the $(i_{\ell})$ corresponding to $p$, or there are two such terms $i_{\ell}, i_{\ell+1}$ and  $CS(\G,C,p)=i_{\ell}+ i_{\ell+1}+ 2$, \cite[Proposition $3.1$ p. $157$]{MR1649358}. This yields (\ref{lem:bound:i}).
 From the proof of Seidenberg's theorem \cite[Appendice I]{MR608290},\cite[Theorem~$1.1$]{MR3328860}, $\mu(\G,p)\leq\mu(\F,q)$ where $p$ maps to $q\in \P^2$. Moreover, after \cite[Proposition~$2.1$]{MR3328860}, $\mu(\F,q) \leq d^2+d+1$, whence  (\ref{lem:bound:ii}).
 As $\G$ is reduced, calculation shows, for every singularity $p$,  $ 0 \leq Z(\G,C,p)\leq \mu(\G,p)$. Summing up (\ref{lem:bound:ii}) over all singularities thus yields (\ref{lem:bound:iibis}). From \cite[Proposition $2.3$]{MR3328860}, we have $N\G\cdot C=C\cdot C+ Z(\G,C)$, this allows to infer (\ref{lem:bound:iii}) from (\ref{lem:bound:i}) and (\ref{lem:bound:iibis}).
Estimate (\ref{lem:bound:iv}) follows from $2 C\cdot D= (C+D)^2-C^2-D^2$ and (\ref{lem:bound:i}).
\end{proof}

With the notation of Lemma~\ref{lem:bound}, the bound $\cnst$ we shall obtain for Theorem~\ref{THM:C} will be a function of the five constants $n,d,\Gamma_3$, $\Gamma_4$, $N\G^2$ mentioned above and one extra constant. The extra constant, called $\Lambda$, is defined  by the formula
\[
    \Lambda=\max\left(  \mathbb Q \cap  \bigcup_{\ell=1}^r \{-i_{\ell}\} \right) ,
\]
unless the set inside the parenthesis is empty, in which case we set $\Lambda =0$.

\subsection{Poles and residues of transverse affine structures}\label{poles and residues}

\begin{lemma} \label{chains}
Let $\G$ be a reduced foliation on a smooth complex surface $S$ which is transversely affine with connection $\nabla$. Let $E_1$,$E_2$ be two germs of invariant irreducible formal curves at $p\in S$. Suppose that $\G$ is defined by a a germ of vector field at $p$ with eigenvalue $1$ in the direction $T_pE_1\subset T_pX$  and eigenvalue $\lambda\in \C\setminus \Q_{>0}$ in the direction  $T_pE_2$ .
Denote $m_1,m_2$ the order of the poles of $\nabla$ at $E_1$ and $E_2$ respectively.
\begin{enumerate}
\item\label{lin} If $\lambda\in \C\setminus \Q$  then $m_1\leq 1$,  $m_2\leq 1$.
\item If $\lambda\in \Q^*$ then  either both poles  have order $1$ or both poles have order $\geq 2$. In any case, $m_2-1=-\lambda (m_1-1)$.
\item \label{saddle-node} If $\lambda=0$ then $m_2\leq1$, and $m_1\in\{1,\mu(\G,p)\}$.
Moreover, if $m_1,m_2\leq 1$, the residues of $\nabla$ at  $E_1$ and $E_2$ belong to $\R_{\leq-1}$.
\end{enumerate}
In any case,  a bound for  $m_1$ induces a bound for $m_2$ and \textit{vice-versa}.
\end{lemma}
\begin{proof}
Let $(\omega,\eta)$ a local pair defining $(\F,\nabla)$ around $p$.
Suppose there exists a formal diffeomorphism $\phi: (\C^2,0 )\to (X,p)$ such that $\phi^*\omega=ug\tilde{\omega}$ where $u$ is a formal unit, $g$ is a convergent meromorphic function and $\tilde{\omega}$ is a meromorphic closed $1$-form. Assume also one separatrix of $\F_{\tilde{\omega}}$ has infinite holonomy.  Then the formal meromorphic $1$-form $\tilde{\eta}=\phi^*\eta+\frac{du}{u}+\frac{dg}{g}$ is a formal integrating factor for $\tilde{\omega}$. As such, $\tilde{\eta}=f \tilde{\omega}$, for $f$ a formal meromorphic first integral of $\F_{\tilde{\omega}}$. This foliation is reduced and has only two formal separatrices, in particular if one takes  a reduced fraction $f=f_1/f_2$ with $f_i$ formal holomorphic, $f_1$ or $f_2$ is a unit and we have a holomorphic formal first integral, $f$ or $f^{-1}$. If this is not constant, by \cite[Th\'eor\`eme A]{MR608290}, we obtain a non constant converging holomorphic first integral for $\F_{\tilde{\omega}}$, contradicting infinite holonomy. Hence $\tilde{\eta}=\alpha \tilde{\omega}$ for some $\alpha\in \C$ and $\phi^*\eta=\alpha\tilde{\omega}-\frac{dg}{g}-\frac{du}{u}$. One concludes the residue and polar order for $\eta$ along $E_i$ are those of $\alpha\tilde{\omega}-\frac{dg}{g}$ along $\phi^*E_i$.

The argument in the next paragraphs relies on the formal classification of germs of foliations
with reduced singularities as recalled in the first chapter of \cite{MR3328860}.

If $\lambda\in \R\setminus \Q$ then the foliation is formally linearizable and we are
in the above situation with $g=xy$, $\tilde{\omega}=\lambda\frac{dx}{x}-\frac{dy}{y}$. The holonomy $y\mapsto e^{2i\pi\lambda}y$ has infinite order. We conclude $\eta$ has simple poles.

If $\lambda=-p/q$ with $p,q \in \mathbb N$, $\gcd(p, q)=1$,  then  either the foliation  is linearizable or, for some $\tau\in \C, k\in \mathbb N^*$, we have a formal normal form given by
$\tilde{\omega}=d(\frac{1}{(x^py^q)^k})+p(\tau-1)\frac{dx}{x}+q\tau\frac{dy}{y}$. In the latter case,  we are in the situation of the first paragraph with $g=(x^py^q)^kxy$. The holonomy $h:y\mapsto e^{2i\pi\lambda}(y+y^{k+1}+...)$ has infinite order because $h^{nq}$ is conjugate to some map $y\mapsto y+y^{k+1}+...$, for every $n>0$. If $\alpha\neq 0$ both poles have order $\geq2$ and $m_2-1=-\lambda (m_1-1)$. If $\alpha=0$ both poles are logarithmic, hence $m_2-1=-\lambda (m_1-1)$.

If $\lambda=0$ then we have a formal normal form $\tilde{\omega}=\frac{dx}{x}+\tau\frac{dy}{y}+d(\frac{1}{y^k})$. This case is also handled thanks to the first paragraph with $g=y^kxy$, as above, the holonomy $y\mapsto(y+y^{k+1}+...)$ has infinite order. One finds $m_1=k+1$ unless $\alpha=0$, in which case $m_1= 1$ and the residues are $(-k-1,-1)$. In any case $m_2\leq1$.

If $\lambda=-p/q$ with $p,q \in \mathbb N$, $\gcd(p,q)=1$ and,  in some formal coordinates, the foliation is given by $\tilde{\omega}=p \frac{dx}{x}+q\frac{dy}{y}$, the local pair $(xy\tilde\omega,\eta)$ must satisfy $\eta=g(p \frac{dx}{x}+q\frac{dy}{y})-\frac{dx}{x}-\frac{dy}{y}$, with $g=F^{\pm1}$ for a formal holomorphic first integral $F$. After \cite[Th\'eor\`eme A]{MR608290}, $F=f(x^py^q)$ for some $f\in \C[[t]]$. If $g=F$ both poles are logarithmic. If $g=F^{-1}$, $F=v(x^py^q)^k$, for a unit $v$ and $k\geq 0$. If $k=0$ we are in the previous case, otherwise $m_1=pk+1$ and $m_2=qk+1$, whence the conclusion.
\end{proof}

\begin{lemma}\label{disjoint}
Let $\G$ be a reduced foliation on a smooth complex surface which is transversely affine with \emph{logarithmic} connection $\nabla$. Assume $\nabla$ has  real residues and
decompose the divisor $\Res(\nabla)$ as $\Res(\nabla)_{>-1}+\Res(\nabla)_{\leq -1}$ where the second term is the part with coefficients $\leq -1$. Then
the supports of these two divisors are disjoint.
\end{lemma}
\begin{proof}
Let $s$ be an intersection of two polar components of $\nabla$. The foliation is given by a local one form $\lambda y dx-xdy+\ldots$ around $s$.
The logarithmic connection has a connection matrix $\eta=r_1f(x,y)\frac{dx}{x}+r_2g(x,y)\frac{dy}{y}$, where $f,g$ are holomorphic with $f(0,y)\equiv1$ and  $g(x,0)\equiv1$. Comparison of
$d\omega=(\lambda+1)dy\wedge dx+\ldots$ with  $\omega\wedge\eta=(r_1+\lambda r_2)dx\wedge dy+\ldots$  yields the identity  $(r_1+1)=-\lambda(r_2+1)$. If $\lambda \neq 0$, then
$r_1=-1$ is tantamount to  $r_2=-1$. If $r_2+1\neq0$ then one must have $\lambda \in \R$ and hence  $\lambda<0$, in particular $(r_1+1)(r_2+1)>0$.  If $\lambda=0$, by Lemma~\ref{chains}, $r_1,r_2\leq-1$.
In any case ($r_1\leq-1$ and $r_2\leq -1$) or ($r_1>-1$ and $r_2> -1$).
\end{proof}

\begin{lemma}\label{negdef}
Assumptions as in Lemma \ref{disjoint}.
The intersection form is negative definite on the support of $\Res(\nabla)_{>-1}$.
\end{lemma}
\begin{proof}
By Proposition~\ref{P:chave}, there exists a generically finite morphism $\pi :Y\rightarrow X\setminus \Res(\nabla)_{\leq-1}$ and a morphism $Y\to A$ that contracts $\pi^{-1}(\Res(\nabla)_{>-1})$.
Hence, by Grauert's criterion, the intersection form is negative definite on  $\pi^{-1}(\Res(\nabla)_{>-1})$. For a compact divisor $C$ in $A$, we have $\pi^*(C)\cdot\pi^*(C)=\ell\cdot C\cdot C$, where $\ell$ is the cardinality of the general fiber of  $\pi$. Hence, if $C$ is non trivial with $\supp(C)\subset \supp(\Res(\nabla)_{>-1})$,  as $\pi^*(C)\cdot\pi^*(C)<0$, we have $C\cdot C<0$. \end{proof}

\subsection{Proof of Theorem \ref{THM:C} for virtually transversely Euclidean foliations}\label{algo closed}
Assume  $\F$ is a degree  $d\geq 2$ virtually transversely Euclidean foliation on $\P^2$ without rational first integral.  Thus there exists a flat logarithmic connection with rational residues $\nabla$ which defines a transverse structure for $\mathcal F$. As in the proof of Theorem \ref{THM:A} we will decompose $\Res(\nabla)$ as $\Res(\nabla)_{>-1}+\Res_{\leq-1}(\nabla)$, with $\Res(\nabla)_{>-1}$ supported on the union of the components which have residues greater than $-1$.

We will show how to algorithmically  find a bound $B$ for the degree of $\supp(\Res(\nabla))=(\nabla)_{\infty}$. Once this is done we can compute all the $\F$-invariant curves of  degree no larger than $B$. If there are $m$ such algebraic $\F$-invariant curves and they are defined by $m$ irreducible homogeneous polynomials $f_1, \ldots, f_m$ then the existence of a transversely affine structure with logarithmic poles and rational residues is equivalent to  the existence  of a solution $(\lambda_1, \ldots, \lambda_m) \in \mathbb Q^m$  for  the  system of linear equations
\begin{align*}
 d\omega &= \omega \wedge \left( \sum_{i=1}^m  \lambda_i \frac{df_i}{f_i} \right) \, ,
\end{align*}
where $\omega$ is a homogeneous $1$-form on $\mathbb C^3$ defining $\mathcal F$.
To conclude the proof of Theorem~\ref{THM:C} for this case, we will explain in Proposition~\ref{prop:bound euclidean} that the bound $B$ obtained from our algorithm is smaller than a quantity that depends only on the three following data: the degree of $\F$, the number of singularities of a desingularisation of $\F$ and the corresponding Camacho-Sad invariants.

We start explaining our algorithm. On the one hand, a bound for the degree of $\supp(\Res(\nabla)_{>-1})$ is given
by Theorem \ref{T:transEuclidean}. On the other hand, using  Proposition~\ref{P:residues}, we can
write
\begin{align}\label{eq:res}
    \deg(\supp(\Res_{\leq-1}(\nabla))) \le - \deg \Res_{\leq-1}(\nabla) = \deg( N\F) + \deg \Res(\nabla)_{>-1}.
\end{align}
 Therefore in order to bound the degree
 of $\supp(\Res_{\leq-1}(\nabla))$ it suffices to bound the degree of the divisor $\Res(\nabla)_{>-1}$.

To this end, compute $\pi : (X,\mathcal G)\rightarrow (\P^2,\F)$  a reduction of singularities of $\mathcal F$ and  let $\mathcal C$ be the set of irreducible $\mathcal G$-invariant algebraic curves $C$ on $X$ satisfying $C \cdot \pi^* \mathcal O_{\mathbb P^2}(1) \le 12 (d-1)$. Notice that $\mathcal C$ consists of the strict transforms of irreducible $\mathcal F$-invariant algebraic curves of degree at most $12(d-1)$ and the $\mathcal G$-invariant irreducible components of the exceptional divisor of $\pi$.

Let $\nabla_{\G}$ denote the strict transform of $\nabla$. In  particular  $\nabla_{\G}$  is a logarithmic connection
on $N\G$ with rational residues. As we did for $\Res(\nabla)$, we decompose $\Res(\nabla_{\G})$ as
$\Res(\nabla_{\G})_{>-1} + \Res(\nabla_{\G})_{\le -1}$. The coefficients of $\Res(\nabla_{\G})_{>-1}$ along
the strict transform of $\F$-invariant curves coincide with those of $\Res(\nabla)_{>-1}$ on the corresponding curves. Therefore it suffices to bound the coefficients of $\Res(\nabla_{\G})_{>-1}$  to conclude.

For that sake we are going to consider all  subsets   of curves in $\mathcal C$, including the empty set. According to Theorem \ref{T:transEuclidean},  one of these subsets coincides with the support of $\Res(\nabla_{\G})_{>-1}$. For each choice of subset, say $C_1, \ldots, C_k$, we first check if the intersection matrix $(C_i \cdot C_j)$ is negative definite in order to be in accordance with Lemma \ref{negdef}. If it is not the case we discard this subset. Otherwise, we pretend that $\Res(\nabla_{\G})_{>-1}$ is supported on this subset and  use the negative definiteness
to determine the coefficients $(r_1, \ldots, r_k)$  of $\Res(\nabla_{\G})_{>-1}= \sum r_i C_i$. Indeed, it suffices to compute the integers $\alpha_j= N\G\cdot C_{j}$ and plug them into the system of linear equations
\begin{align}\label{system:euclidean}
    \sum_{i=1}^k r_i C_i \cdot C_j = \alpha_j, \, \quad   j \in \{ 1, \ldots, k \}\, .
\end{align}
The left hand side coincides with $N\G \cdot C_j$ thanks to Lemma \ref{disjoint}. The negative definiteness
of the matrix $(C_i \cdot C_j)$ guarantees the existence of a unique solution. Varying through all subsets of $\mathcal C$ we obtain a finite number of possibilities for $\Res(\nabla_{\G})_{>-1}$ and, consequently, a finite number of possibilities for $\deg \Res(\nabla)_{>-1}$.

\begin{prop}\label{prop:bound euclidean}
With the hypotheses and notation of Lemma~\ref{lem:bound}. Let $S$ be the finite set of solutions of linear systems $Ax=b$ such that
\begin{itemize}
\item $A\in M_k(\Z)$ is symmetric negative definite, $k \leq 2n$,
\item  The coefficients of $A$ satisfy $\vert A_{ij} \vert \leq \Gamma_4$, $1\leq i, j \leq k$,
\item the coefficients of $b$ satisfy $b_i\in \Z$,  $\vert  b_i \vert \leq n\Gamma_3+\Gamma_4$, $1\leq i \leq k$.
\end{itemize}
If $\F$ is virtually transversely Euclidean and $M$ is the maximal modulus of entries of elements of $S$,
then
\begin{bound}\label{bound:I}
\deg (\nabla)_{\infty}\leq 12(d-1)(M+1) +d+2.
\end{bound}
\end{prop}

\begin{proof}
Let $C_1,\ldots,C_k$ be the irreducible components of $\Res(\nabla_{\G})_{>-1}$ and $\Res(\nabla_{\G})_{>-1}=\sum r_iC_i$. From Lemma~\ref{negdef}, one has $C_i^2<0$, $i=1,\ldots,k$. In particular any $C_i$ contains a singularity of $\G$. As $\G$ is reduced, any singularity is contained in at most two invariant curves, so that $k\leq 2n$.
In addition to Lemma~\ref{lem:bound}, this shows the associated system~(\ref{system:euclidean}) satisfies the above conditions and $(r_i)$ belongs to $S$.
One has $\Res(\nabla)_{>-1}=\sum r_i \hat{C}_i$, where summation is restricted to indices for which the projection $\hat{C}_i\subset \P^2$ is a curve. By Theorem~\ref{T:transEuclidean}, the sum of the degrees of the curves $\hat{C}_i$ is smaller or equal to $12(d-1)$. From $\vert r_i\vert \leq M$, one concludes $\vert\deg \Res(\nabla)_{>-1}\vert\leq 12(d-1)M$.
The inequality (\ref{eq:res}) then shows
\[ \deg (\nabla)_\infty=\deg \supp(\Res(\nabla)_{>-1})+\deg \supp(\Res(\nabla)_{\leq-1})\leq 12(d-1)(M+1) +d+2. \]\end{proof}

Moreover, using Cramer's formulas for the inverse, one easily estimates  $M$:
\begin{bound}
    M \leq 2n(n\Gamma_3+\Gamma_4)\Gamma_4^{2n-1}(2n-1)! \, .
\end{bound}
This concludes the proof of Theorem \ref{THM:C} in the particular case of transversely Euclidean foliations.

\subsection{Conclusion of the proof of Theorem \ref{THM:C}} \label{algo Riccati}
Let $\F$ be a transversely affine foliation on $\P^2$ with  $\deg(\F)\geq 2$. Assume that $\F$ is not virtually transversely Euclidean. In particular, $\F$ is not algebraically integrable.

Our task now does not reduce to bounding the degree of the support of $(\nabla)_{\infty}$. We also have to bound the coefficients of the irreducible components in it.  Let $(D_i)_{1\leq i\leq m}$ be  algebraic curves defined by irreducible homogeneous polynomials $(f_i)_{1\leq i\leq m}$ Let $(n_i+1)_{1\leq i\leq m}$ be positive integers. The existence of a transversely affine structure for $\F$ with polar divisor
$\sum(n_i+1)D_i$  is equivalent to the existence   of a solution $(\lambda_1, \ldots, \lambda_m,g) \in \mathbb C^m \times \mathbb C[x,y,z]$  for  the  system of linear equations
\begin{align}\label{system:structure}\left \lbrace
\begin{array}{rl}
 d\omega &= \omega \wedge \left( \sum_{i=1}^m  \lambda_i \frac{df_i}{f_i}  + d \left( \frac{g}{\prod_{i=1}^m {f_i}^{n_i}} \right)\right)  \\
 \deg(g) &= \sum_{i=1}^m n_i \deg(f_i) \, ,
 \end{array}
 \right .
\end{align}
where $\omega$ is a homogeneous polynomial $1$-form on $\mathbb C^3$ defining $\mathcal F$.

We know from Theorem \ref{T:structure} that
$\F$  is the pull-back of a transversely affine Riccati foliation $\mathcal R$ on a ruled surface $S$ under a rational map $p: \P^2 \dasharrow S$. As in Section \ref{algo closed},  compute $\pi: (X,\mathcal G)\rightarrow (\P^2,\F)$  a reduction of singularities of $\F$. Denote $f:\P^2\dasharrow \P^1$ the pull-back by $p$ of the adapted fibration of $\calR$, which we may and do suppose to have irreducible general fiber. Denote also $\tilde{f}=f\circ \pi$, $\tilde{p}=p\circ \pi$. By Lemma \ref{lemme fibration}, the rational map $\tilde{f}$ is a genuine fibration.

Decompose the polar divisor of $\nabla_{\mathcal G}$ as $H+V$, where the irreducible components of $H$ dominate the basis of $\tilde{f}$ and no component of $V$ has this property. Let $\sigma$ be the unique  $\calR$-invariant algebraic curve in $S$  that dominates the basis of the reference fibration, cf. Lemma \ref{L:propR}. Notice $\tilde{p}(H)=\sigma$ so that $H$ is non empty and $\nabla_{\mathcal G}$ has only simple poles in the components of $H$. A simple computation shows that the residues along the irreducible components of $H$ have the form $-k-1$, $k\in \mathbb N^*$.

Our goal is to bound the intersection number $(\nabla)_{\infty} \cdot \mathcal O_{\mathbb P^2}(1)=(H+V) \cdot \pi^* \mathcal O_{\mathbb P^2}(1)$. We will make use of the following lemma.

\begin{lemma}\label{L:signatureW}
Notation as above. The Neron Severi group of $X$ is denoted $NS(X)$. The following assertions hold true.
\begin{enumerate}
\item If  $W' \subset NS(X)\otimes \R$ is the vector subspace spanned by the classes of the irreducible components of $\supp(V)$ then the intersection form on $W'$  is negative semi-definite with kernel of dimension $1$, spanned by a fiber of $\tilde{f}$.
\item If $W\subset NS(X)\otimes \R$ is the vector subspace spanned by  $W'$  and by the Chern class of $N\G$ then  the  intersection form on $W$ is non-degenerate.
\end{enumerate}
\end{lemma}
\begin{proof}
(1) Since we are assuming $\F$ is not virtually transversely Euclidean, Lemma~\ref{L:only dicritic} guarantees that the support of $V$ contains the support of a fiber of $\tilde{f}$. Once one observes that the classes of any two fibers are the same in $NS(X)$, the result follows from Zariski's lemma about the intersection of fiber components of a fibration, cf. \cite[Chapter III, Lemma 8.2]{MR2030225}.

(2) We can choose a basis $(F,e_1,\ldots, e_r)$ for $W'$ with $F$ equal to (the class of) a fiber, and $e_1, \ldots, e_r$ elements satisfying $e_i \cdot e_j = - \delta_{ij}$ (Kronecker delta)  and $F\cdot e_i = 0$. Let $D=\sum_i m_iH_i$ be the part of $-\Res(\nabla_{\G})$ supported on $H$, with $H_i$ irreducible, $m_i\geq2$.
By Proposition~\ref{P:residues}, we have  $D=N\G +w'$, where $w'\in W'$. In particular $D\in W$ and
we have $N\G \cdot F=D \cdot F$, because $F$ intersects trivially any element of $W'$. As a consequence, representing $F$ by a  smooth non-invariant fiber of $\tilde{f}$, one checks $N\G \cdot F\geq \sum m_i>0$. If $q$ is the matrix of the intersection form restricted to $W$  in the basis $(N\G,F,e_1,\ldots, e_{r})$, expansion  with respect to the first line shows $\det(q)=\pm (N\G \cdot F)^2\neq0$.
\end{proof}

Let $\mathcal C$ be the set of $\mathcal G$-invariant algebraic curves $C$ on $X$ satisfying $C \cdot \pi^* \mathcal O_{\mathbb P^2}(1) \le 6 (d-1)$. Theorem  \ref{T:Kod1} guarantees that the support of $V$ is formed by members of $\mathcal C$.
As in the proof of Lemma \ref{L:signatureW}, write $\sum_i m_iH_i = D = N\G +w'$, where $w'\in W'$. Notice that the divisor $H$  appearing in the decomposition $(\nabla)_{\infty} = H+V$  is a  reduced divisor since the {\it horizontal} poles of $\nabla$ are logarithmic. As $m_i\geq 2$, it thus suffices to bound $D\cdot \pi^* \mathcal O_{\mathbb P^2}(1)$ to bound  $H\cdot \pi^* \mathcal O_{\mathbb P^2}(1)$.  To achieve this, consider all the subsets of $\mathcal C$  which have semi-definite negative intersection matrix with one dimensional kernel; and  the kernel is not orthogonal to $N\G$. Restrict furthermore to subsets for which the kernel is generated by an element with non-negative integral coefficients.

Let $C_1, \ldots, C_k$ be one such subset and let $F_0$ be a generator of the kernel, with relatively prime non-negative integral coefficients. We explain below how the algorithm succeeds assuming this set yields a basis of $W'$. The reader will easily find out how the algorithm could stop in case this set is not a basis of $W'$.

First observe that  $\supp(V)\cdot \pi^* \mathcal O_{\mathbb P^2}(1)$ is smaller than the degree of the divisor $\tang$ of tangencies between the foliation $\F$ and  a foliation $\mathcal H$ defined by a pencil of degree $6(d-1)$ curves. One can see $\tang$ has degree not bigger than $13(d-1)$, since $\deg(\mathcal H)\leq 12(d-1)-2$ and $\deg(\tang)=\deg(\mathcal H)+\deg(\F)+1$. In particular, we get the estimate $\sum C_i \cdot \pi^* \mathcal O_{\mathbb P^2}(1)\leq 13(d-1)$.

Remembering $N\G \cdot \pi^* \mathcal O_{\mathbb P^2}(1)=d+2$, bounding $D\cdot \pi^* \mathcal O_{\mathbb P^2}(1)$ is then reduced to bounding the coefficients $\beta_i$ in the decomposition $D=N\mathcal G+\sum \beta_i C_i$. To this aim, first observe  that $D \cdot C_i$ is non-negative  and bounded by $\max_{j} m_j \cdot Z(\mathcal G, C_i)$.
To bound $\max_j m_j$, choose $F_1 = \lambda \cdot F_0$ in such a way that
$F_1 \cdot \pi^* \mathcal O_{\mathbb P^2}(1) = 6(d-1)$.
The class of $F_1$ will be a rational multiple of the class of  a fiber of the fibration $\tilde{f}$,
with corresponding factor bigger or equal to $1$. Also observe $\lambda\leq 6(d-1)$, for $F_0 \cdot \pi^* \mathcal O_{\mathbb P^2}(1)\geq 1$.
Using that any element in $W'$ intersects trivially the fibers of $\tilde{f}$, we deduce
\begin{equation}\label{bound m_i}
    2\leq\sum_i m_i \le  \sum_i m_iH_i \cdot F_1 = N\mathcal G \cdot F_1
    \, .\end{equation}
This allows to  bound the integers $M_0,M_j$ in the following square system of  linear equations for $(1,\beta_1,\ldots,\beta_k)$.
\begin{align}\label{sys:D}
 \left \lbrace  \begin{array}{rl}
    N\G \cdot N\G+\sum_i\beta_i (C_i\cdot N\G)  & = M_0 , \\
    N\G \cdot C_j+\sum_i\beta_i (C_i\cdot C_j)   & = M_j,  \quad \text{ for }  j \in \{1,\ldots, k\} \, .
    \end{array} \right.
 \end{align}
Indeed, with the notation of Lemma~\ref{lem:bound}, one has
\begin{align}\label{unpromoted}
\left \lbrace \begin{array}{rclc}\vert D \cdot C_i  \vert &\leq& Z(\F,C_i) N\G \cdot F_1&\\
\Gamma_2N\G \cdot F_1 &\leq& D\cdot N\G=\sum m_iH_i\cdot N\G &\leq \left(n(\Gamma_3+2)+\Gamma_1\right)N\G \cdot F_1.
\end{array} \right.
\end{align}
The second line is obtained combining  Lemma~\ref{lem:bound}-(\ref{lem:bound:iii}) and $(\ref{bound m_i})$.
Using Lemma~\ref{lem:bound F_0} below, $N\G\cdot F_1\leq 6(d-1)N\G\cdot F_0$ and $Z(\F,C_i)\leq n\Gamma_3$, one may promote these bounds to  coarser ones depending only on $d$, the number $n$ of singularities of $\G$ and the corresponding Camacho-Sad invariants.

 \begin{lemma}\label{lem:bound F_0} With the notation of Lemmas \ref{lem:bound} and \ref{L:signatureW}.
 If $C_1,\ldots, C_k$ is a basis of $W'$, then $k\leq 2n+1$ and
\begin{align} \label{bound:F_0} \vert N\G\cdot F_0\vert \leq (4n+2)(n\Gamma_3+\Gamma_4)\Gamma_4^{2n}(2n)!~~.
\end{align}
\begin{proof}
Let $F_0=\sum \alpha_i C_i$.
As the intersection form on $W'$ has a kernel of dimension $1$, none but one of of the curves $C_i$ may have zero self-intersection, hence $k\leq 2n+1$. Moreover, thanks to Lemma~\ref{lem:bound},  \[\vert C_i \cdot C_j\vert \leq \Gamma_4, ~~ 1\leq i \leq j \leq k.\]
Combined with \cite[Theorem 2]{MR0396605} applied to the system $\left(\sum_i \alpha_i C_i\cdot C_j=0\right)_j$, this yields \[\vert \alpha_i\vert \leq 2\Gamma_4^{k-1}(k-1)!\leq 2\Gamma_4^{2n}(2n)! ~.\]
From Lemma~\ref{lem:bound}-(\ref{lem:bound:iii}), we also have $\vert N\G\cdot C_i\vert \leq n\Gamma_3+\Gamma_4$.
The triangle inequality then yields the announced result.
\end{proof}
\end{lemma}
 Expressing the solution $(1,\beta_1,\ldots,\beta_k)$ of the linear system (\ref{sys:D}) by means of  Cramer's rule, and taking into
 account the integrality of the coefficients, one obtains an estimate for the coefficients $\beta_i$.  This provides a bound for $H \cdot \pi^* \mathcal O_{\mathbb P^2}(1)$.

  In view of proving Theorem~\ref{THM:C},  observe that if $\frac{\Gamma_5}{6(d-1)}$ is the upper bound in (\ref{bound:F_0}), a promoted version of (\ref{unpromoted}) is
 \[\vert D\cdot N\G \vert \leq (n\Gamma_3+\Gamma_4)\Gamma_5,~~ \vert D\cdot C_i\vert \leq (n\Gamma_3+\Gamma_4)\Gamma_5,~~i=1,\ldots, k\, , \]
 and one easily deduces, setting  $\tilde{\Gamma}_4=\max(N\G^2,n\Gamma_3+\Gamma_4)$,
\[ \vert \beta_i\vert \leq (2n+2)(n\Gamma_3+\Gamma_4)\Gamma_5\tilde{\Gamma}_4^{2n+1}(2n+1)!~~, \]
so that
 \begin{bound} \label{bound:H}H \cdot \pi^* \mathcal O_{\mathbb P^2}(1)\leq d+2+13(d-1)(2n+2)(n\Gamma_3+\Gamma_4)\Gamma_5\tilde{\Gamma}_4^{2n+1}(2n+1)!~.
\end{bound}
In practice, a better estimate is obtained from (\ref{unpromoted}) and computation of $N\G\cdot F_1$.

 At this point, we have a bound for the degree of all irreducible components in the support of $(\nabla)_{\infty}$, \textit{e.g.}  the maximum of $6(d-1)$ and the right hand side of (\ref{bound:H}).
  Identify all $\G$-invariant algebraic curves $C$ with $C\cdot \pi^*\mathcal{O}_{\P^2}(1)$ not bigger than this bound, denote $\mathcal P$ the set of such curves. We now proceed to bound the coefficients of the effective divisor $(\nabla)_{\infty}$.
 From Lemma~\ref{L:propR} (\ref{L:propR:6})-\ref{L:propR:6:iii},  any fiber of $\tilde{f}$ which contains multiple poles of $\nabla_{\G}$ is $\G$-invariant  and one of its components contains a singularity through which passes a non algebraic (perhaps formal) separatrix.
The polar order of $\nabla_{\G}$ at this non algebraic separatrix  is zero. Hence in any fiber of $\tilde{f}$ containing a multiple pole of $\nabla_{\G}$, we have a singularity which is contained in only one curve of $\mathcal{C}$, say $\tilde{C}$. The other local (possibly formal) separatrix is not a pole of $\nabla_{\G}$ (non polar).
From Lemma~\ref{chains} and Milnor number computations, this allows to bound inductively the order of poles of $\nabla_{\G}$ in any $\G$-invariant component which is connected to $\tilde{C}$ by a sequence of elements of $\mathcal{C}$. The components of the fiber are among such curves.

Assign in this way a bound $b_C$ for any irreducible curve $C\in \mathcal C$ which is connected to a non polar separatrix by a sequence of  curves belonging to $\mathcal C$. If there are various
such sequences, take the smallest bound.
In this manner, all the curves $C$ that are contained in invariant fibers of $\tilde{f}$ and are multiple poles of $\nabla_{\G}$ are endowed with a bound $b_C$.  For the remaining elements of $\mathcal{P}$, assign the trivial  bound $b_C=1$.  This bounds the coefficients of $(\nabla_{\G})_{\infty}=\sum_{C\in \mathcal{P}} a_C C$, namely $a_C\leq b_C$. This allows to find the transverse structure of $\F$ by solving finitely many systems (\ref{system:structure}). This finishes the description of our algorithm, we now conclude the proof of Theorem~\ref{THM:C}.

The coefficients $a_C$ can be bounded uniformly in terms of the Camacho-Sad indices and the degree of the foliation as follows.
\begin{lemma}\label{chains:2} With the assumptions and notation of Lemma~\ref{lem:bound}.
Let $C_1,\ldots, C_k$ be a chain of $\G$-invariant algebraic curves, such that $C_1$ bears a singularity $q$ free of other algebraic separatrix. Let $\Lambda=\max( \{-i_{\ell}\vert i_{\ell}\in \Q \}\cup\{0\})$. For a given transverse affine structure for $\G$, denote $a_i$ the polar order along $C_i$.
Then, for all $i$,
\[\left \lbrace
\begin{array}{l}a_i\leq \Gamma_3\frac{\Lambda^i-1}{\Lambda-1}\mbox{ if } \Lambda >1,\\
a_i\leq i\Gamma_3\mbox{ if } \Lambda=1,\\
a_i\leq \Gamma_3\mbox{ if } \Lambda<1.\\
\end{array}
\right.
\]
Observe that, in any case, the given bound for $a_k$ is also valid for $a_i$.
\end{lemma}
\begin{proof}
First, if $\Lambda<1$, then every singularity of $\G$ is a saddle-node or has irrational quotient of eigenvalues, in particular, from  Lemma~\ref{chains}, $a_i\leq\Gamma_3$.

Otherwise $\Lambda\geq 1$, we introduce the second formal separatrix $C_0$ through $q$, which has polar order $a_0=0$.
It follows from Lemma~\ref{chains}, that $a_{i}\leq g(a_{i-1}),~i=1,\ldots,k$, where $g(x)=\Lambda x+\Gamma_3$.
The function $g$ is increasing on $\R$ and a  bound $b_{i-1}$ for $a_{i-1}$ induces the bound $b_i=g(b_{i-1})$ for $a_i$. The sequence of bounds $b_0=a_0, b_i=g(b_{i-1})$ is the announced one.
\end{proof}
In our context, we apply   Lemma~\ref{chains:2} to chains of curves contained in fibers of $\tilde{f}$, hence of length $k\leq 2n+1$.
 With the notation of Lemma~\ref{chains:2}, set $\Gamma_6=\Gamma_3\frac{\Lambda^{2n+1}-1}{\Lambda-1}$ if $\Lambda>1$, $\Gamma_6=(2n+1)\Gamma_3$ if $\Lambda=1$, and $\Gamma_6=\Gamma_3$ otherwise. We infer that the polar multiplicities $a_C$ in $V$ are not bigger than $\Gamma_6$. As a consequence,
\begin{bound}\label{bound:IV}
V\cdot\mathcal{O}_{\P^2}(1)\leq 13(d-1)\Gamma_6.
\end{bound}
Combined with (\ref{bound:H}), this concludes the proof of Theorem~\ref{THM:C}.\flushright
\bibliographystyle{amsalpha}
\bibliography{biblio}

\end{document}